\newtheorem{Thm}{Theorem}[section]
\newtheorem{Prop}[Thm]{Proposition}
\newtheorem{Lem}[Thm]{Lemma}
\newtheorem{Cor}[Thm]{Corollary}
\newtheorem{Claim}[Thm]{Claim}
\theoremstyle{remark}
\newtheorem{Rem}[Thm]{Remark}
\theoremstyle{definition}
\newtheorem{Def}[Thm]{Definition}
\newtheorem{Assum}[Thm]{Assumption}
\newtheorem{Exa}[Thm]{Example}
\newcommand{\mysection}[2]{%
\vspace{2mm}\section{\bf #1}\label{#2}
}
\def\Z{{\mathbb Z}}
\def\R{{\mathbb R}}
\def\Q{{\mathbb Q}}
\def\C{{\mathbb C}}
\def\calA{\mathscr{A}}
\def\calF{\mathscr{F}}
\def\calO{\mathscr{O}}
\def\calS{\mathscr{S}}
\def\g{\mathfrak{g}}
\def\End{\mathrm{End}}
\def\Sym{\mathrm{Sym}}
\def\Conf{\mathrm{Conf}}
\def\bConf{\overline{\Conf}}
\def\Tr{\mathrm{Tr}}
\def\Hol{\mathrm{Hol}}
\def\ve{\varepsilon}
\def\bvec#1{\mbox{\boldmath{$#1$}}}
\def\Diff{\mathrm{Diff}}
\def\Emb{\mathrm{Emb}}
\def\bcalA{\overline{\calA}}
\def\Map{\mathrm{Map}}
\def\fib{\mathrm{fib}}
\def\fr{\mathrm{fr}}
\def\tcoprod{\textstyle\coprod}
\def\tbigwedge{\textstyle\bigwedge}
\def\even{\mathrm{even}}
\def\Lk{\mathrm{Lk}}
\def\2vec#1#2{\begin{pmatrix}#1\\#2\end{pmatrix}}
\title{Theta-graph and diffeomorphisms of some 4-manifolds}
\author{Tadayuki Watanabe\thanks{Department of Mathematics, Kyoto University.}}
\date{\today}
\begin{document}

\maketitle
\setcounter{tocdepth}{2}
\numberwithin{equation}{section}
\renewcommand{\thefootnote}{\fnsymbol{footnote}}

\begin{abstract}
In this article, we construct countably many mutually non-isotopic diffeomorphisms of some closed non simply-connected 4-manifolds that are homotopic to but not isotopic to the identity, by surgery along $\Theta$-graphs. As corollaries of this, we obtain some new results on codimension 1 embeddings and pseudo-isotopies of 4-manifolds. In the proof of the non-triviality of the diffeomorphisms, we utilize a twisted analogue of Kontsevich's characteristic class for smooth bundles, which is obtained by extending a higher dimensional analogue of March\'{e}--Lescop's ``equivariant triple intersection'' in configuration spaces of 3-manifolds to allow Lie algebraic local coefficient system.
\end{abstract}
\par\vspace{5mm}
\tableofcontents

\def\baselinestretch{1.06}\small\normalsize
\mysection{Introduction}{s:intro}

In \cite{Ga}, David Gabai proposed a remarkable viewpoint that several important problems in 4-dimensional topology can be interpreted by 4-dimensional light bulb problem. There is a version of 4-dimensional light bulb problem, which asks whether spanning $k$-disk of the unknotted $S^{k-1}$ in $S^4$ is unique up to isotopy fixing the boundary. Gabai gave a positive resolution of this problem for $k=2$ (``the 4-dimensional light bulb theorem") in \cite{Ga}, and pointed out that a positive resolution for $k=3$ implies the smooth Schoenflies conjecture and that the converse is true up to taking a lift in some finite branched covering of $S^4$ over the unknotted $S^2$. In \cite{BG}, Ryan Budney and Gabai constructed infinitely many counterexamples to the 4-dimensional light bulb problem for $k=3$ by studying the group $\pi_0\Diff(D^3\times S^1,\partial)$ in detail, utilizing an embedding calculus method by Gregory Arone and Markus Szymik in \cite{ArSz}, and gave a framework for approaching the smooth Schoenflies conjecture.

In this paper, we attempt to extend some of the results of \cite{BG} to 4-manifolds that may not be diffeomorphic to $D^3\times S^1$. More precisely, we use the technique of graph surgery given in \cite{Wa1,Wa2} to construct 4-manifold bundles over $S^1$, and show their nontriviality by using invariants. Here, the invariant we use is a twisted analogue of Kontsevich's characteristic class of smooth bundles (\cite{Kon}), and is defined in a quite different way from that of \cite{BG}. The invariant in this paper is defined by extending the 3-manifold invariants of Julien March\'{e} and Christine Lescop that count ``equivariant triple intersection'' in configuration space (\cite{Mar, Les1, Les2}) to 4-manifolds with more general local coefficient system including those of Lie algebra. Main ideas in the definition and computation of the invariant of this paper are included in \cite{Les1, Les2}, which is analogous to those by Greg Kuperberg and Dylan Thurston \cite{KT} for $\Z$ homology 3-spheres. Similar invariants of 3-manifolds with Lie algebra local coefficient system were given in \cite{AxSi, Kon, Fuk, BC, CS}. 

Our approach can be considered different from that of \cite{BG} in the sense that ours can give nontrivial elements that cannot be detected by a direct application of their method. It should be mentioned that our construction is obtained by the ``barbell implantation'' given in \cite[\S{6}]{BG}, as written there, which looks quite simpler than our construction. 

\subsection{Main results}\label{ss:main-result}

For a parallelizable closed manifold $X$ with a basepoint $x_0$, let $X^\bullet$ be the complement of a small open ball around $x_0$. For a compact manifold $Z$ and its submanifold $Y$, let $\Diff(Z,Y)$ denote the group of self-diffeomorphisms of $Z$ which fix a neighborhood of $Y$ pointwise, equipped with the $C^\infty$-topology, let $\Diff(Z,\partial)=\Diff(Z,\partial Z)$  and $\Diff(Z)=\Diff(Z,\emptyset)$. Let $\Diff_0(Z,Y)$ denote the subgroup of $\Diff(Z,Y)$ consisting of diffeomorphisms homotopic to the identity fixing the boundary. Let $\calF_*(X)$ denote the space of all framings on $X$ that are standard near $x_0$. Let $\Map_*^\mathrm{deg}(X,X)$ denote the space of all degree 1 maps $(X,x_0)\to (X,x_0)$ that maps a neighborhood of $x_0$ onto $x_0$ and that are homotopic to the identity. The groups $\Diff(X^\bullet,\partial)$, $\Diff_0(X^\bullet,\partial)$ act on these spaces respectively. Let
\[ \begin{split}
&\widetilde{B\Diff}(X^\bullet,\partial)=E\Diff(X^\bullet,\partial)\times_{\Diff(X^\bullet,\partial)}\calF_*(X),\\ 
&\widetilde{B\Diff}_{\mathrm{deg}}(X^\bullet,\partial)=E\Diff_0(X^\bullet,\partial)\times_{\Diff_0(X^\bullet,\partial)}\bigl(\calF_*(X)\times \Map_*^\mathrm{deg}(X,X)\bigr).
\end{split}\]
The space $\widetilde{B\Diff}(X^\bullet,\partial)$ is the classifying space for framed $X$-bundles $\pi\colon E\to B$ that are standard near $x_0$, and $\widetilde{B\Diff}_{\mathrm{deg}}(X^\bullet,\partial)$ is the classifying space for such $X$-bundles equipped with a map $E\to X$ whose restriction to each fiber is a pointed homotopy equivalence. For $p\geq 1$, we will see that the groups $\pi_p\calF_*(X)$ and $\pi_p\Map_*^\mathrm{deg}(X,X)$ are both abelian groups of finite ranks (Proposition~\ref{prop:pi-F-M}).

\begin{Thm}[Theorems~\ref{thm:Z-inv-O=0}, \ref{thm:surgery}, Corollary~\ref{cor:image-Z}]\label{thm:main1-odd}
Let $\Sigma^3$ be the Poincar\'{e} homology 3-sphere $\Sigma(2,3,5)$ and $X=\Sigma^3\times S^1$. Suppose that the local coefficient system of the adjoint action of a representation $\rho\colon \pi'=\pi_1(\Sigma^3)\to SU(2)$ on $\mathfrak{g}=\mathfrak{sl}_2$ is acyclic, that is, $H_*(X;\mathfrak{g})=0$. Then a homomorphism
\[ 
Z_\Theta^\even\colon \pi_1\widetilde{B\Diff}_{\mathrm{deg}}(X^\bullet,\partial)\to \calA_\Theta^\even(\mathfrak{g}^{\otimes 2}[t^{\pm 1}];\rho(\pi')\times \Z)
\]
to some space of anti-symmetric tensors is defined, and the image of $Z_\Theta^\even$ has countable infinite rank. 
\end{Thm}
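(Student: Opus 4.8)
\emph{Outline.} The statement packages two cited results: Theorem~\ref{thm:Z-inv-O=0}, which constructs $Z_\Theta^0$ and proves it well defined, and Theorem~\ref{thm:surgery}, a surgery formula evaluating $Z_\Theta^0$ on bundles obtained by $\Theta$-graph surgery; the infinite-rank assertion then follows by feeding the formula an infinite family of surgery data. The plan is to organize the argument accordingly: define the invariant as an equivariant, Lie-algebra-decorated configuration space integral; use the acyclicity hypothesis to establish well-definedness; build the relevant loops by graph surgery as in \cite{Wa1}; and extract infinitely many linearly independent values from the surgery formula.

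\emph{Construction of $Z_\Theta^0$.} Given a loop $\gamma$ in $\widetilde{B\Diff}_{\mathrm{deg}}(X^\bullet,\partial)$, the plan is to pull the universal object back to a framed $X^\bullet$-bundle $\pi\colon E\to S^1$ carrying a fiberwise pointed degree $1$ map to $X$, cap off the fibers to a family of closed copies of $X=\Sigma^3\times S^1$, and form the associated fiberwise Fulton--MacPherson/Axelrod--Singer compactified configuration space of two points $EC_2\to S^1$, a compact $9$-manifold with corners. Over the covering of the fiberwise $C_2$ classified by $\rho(\pi')\times\Z$ --- where the $\Z$ records the winding number in the $S^1$ factor --- one constructs an equivariant propagator: a closed degree $3$ form with coefficients in the flat bundle $\mathfrak{g}_\rho\otimes\mathfrak{g}_\rho$ whose restriction to the boundary face over the diagonal is the fiberwise $S^3$-volume form of the unit sphere bundle of $TX$ (trivialized by the framing) tensored with the split Casimir of $\mathfrak{sl}_2$, and with controlled behaviour at the faces of the covering at infinity. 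Here the hypothesis that $H^*(X;\mathfrak{g}_\rho)$ is acyclic is precisely what guarantees that such a propagator exists and that the face contributions which would otherwise obstruct the construction cancel. The value $Z_\Theta^0(\gamma)$ is then the integral over $EC_2\to S^1$ of the triple product of copies of this propagator, with the $\mathfrak{g}$-coefficients assembled according to the combinatorics of the two trivalent vertices of the $\Theta$-graph into an element of the target space $\calA_\Theta^0(\mathfrak{g}^{\otimes 2}[t^{\pm 1}];\rho(\pi')\times\Z)$ of anti-symmetric tensors. Independence of the choice of propagator, of the cap-off, and of the paths of framings and degree $1$ maps --- which is why these last data are incorporated into the classifying space $\widetilde{B\Diff}_{\mathrm{deg}}(X^\bullet,\partial)$ rather than quotiented out --- together with additivity under concatenation of loops, is the content of Theorem~\ref{thm:Z-inv-O=0}, where the relevant anomaly ``$O$'' is shown to vanish for this $X$; this also yields the homomorphism property.

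\emph{The loops and the surgery formula.} For the quantitative half I would produce loops by graph surgery as in \cite{Wa1} --- equivalently, by the barbell implantation of \cite[\S6]{BG}. Fix an embedding of a $\Theta$-graph into $X^\bullet$ with its edges decorated by prescribed loops representing elements of $\pi_1(X)=\pi'\times\Z$, and perform the associated surgery; this yields a $1$-parameter family of diffeomorphisms of $X^\bullet$ rel $\partial$ that are relatively homotopic to the identity, hence a loop $\gamma_\Gamma\in\pi_1\widetilde{B\Diff}_{\mathrm{deg}}(X^\bullet,\partial)$, together with a natural framing and fiberwise degree $1$ map coming from the fact that the surgery is supported near the graph. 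Theorem~\ref{thm:surgery} is then the surgery formula: $Z_\Theta^0(\gamma_\Gamma)$ equals the ``$\Theta$-graph evaluation'' of $\Gamma$, namely the element of $\calA_\Theta^0$ obtained by decorating each edge of $\Theta$ by the $\mathfrak{g}$-valued propagator twisted by the monomial $t^k$ recording the $\Z$-winding of that edge and by the holonomy of $\rho$ along it, and then contracting at the two vertices through the Lie bracket of $\mathfrak{sl}_2$ and its invariant form.

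\emph{Infinite rank, and the main obstacle.} To conclude I would take a family $\{\Gamma_n\}_{n\in\Z}$ of surgery data with the underlying embedded $\Theta$-graph fixed but with the $\Z$-winding of one chosen edge shifted by $n$, so that the surgery formula gives $Z_\Theta^0(\gamma_{\Gamma_n})=t^n\cdot v$ for a single tensor $v$ built from the structure constants of $\mathfrak{sl}_2$ and the values of $\rho$. Since the defining relations of $\calA_\Theta^0(\mathfrak{g}^{\otimes 2}[t^{\pm 1}];\rho(\pi')\times\Z)$ are homogeneous in the variable $t$, the elements $t^n v$ for distinct $n\in\Z$ are linearly independent provided $v\neq 0$; hence the image of $Z_\Theta^0$ contains a free abelian subgroup of countably infinite rank. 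I expect the genuine obstacle to be exactly this non-vanishing $v\neq 0$: one must show that the $\mathfrak{sl}_2$-decorated $\Theta$-graph is not killed in the quotient, which comes down to an explicit computation with the structure constants of $\mathfrak{sl}_2$ and with the representation $\rho$ of $\pi'=\pi_1(\Sigma^3)$ into $SU(2)$, using acyclicity to make the invariant pairing on $\mathfrak{g}_\rho$ available; the remaining technically heavy point --- vanishing of the configuration-space anomaly in the twisted setting --- has already been isolated in Theorem~\ref{thm:Z-inv-O=0}.
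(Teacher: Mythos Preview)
Your overall architecture matches the paper's: construct $Z_\Theta^0$ via a twisted propagator on the two-point configuration bundle (the paper uses chain intersections rather than de Rham forms, but that is a well-known equivalence and your description of the degree-3 propagator dual to the paper's $2n$-chain is correct), establish well-definedness from acyclicity and $\calO(X,A)=0$ (Proposition~\ref{prop:O=0}), produce loops by $\Theta$-graph surgery, and read off values via Theorem~\ref{thm:surgery}. So far so good.

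The genuine gap is in your infinite-rank step. You propose to fix a decorated $\Theta$-graph and shift the $\Z$-winding of one edge, asserting that the resulting values are ``$t^n\cdot v$'' and are linearly independent because ``the defining relations of $\calA_\Theta^0$ are homogeneous in $t$''. This is not correct. The target $\calA_\Theta^0(\mathfrak{g}^{\otimes 2}[t^{\pm 1}];\rho(\pi')\times\Z)$ is an \emph{alternating} tensor cube modulo the diagonal $\Z$-shift and $\mathrm{Ad}(G)^{\times 2}$; the antisymmetry relations are not homogeneous with respect to the $t$-degree on a single factor, and there is no well-defined action of $t^n$ on the quotient by multiplying only one slot. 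Concretely, if your base element has two edge decorations equal, the antisymmetry kills it outright (e.g.\ $1\wedge 1\wedge t^n=0$), so ``$v\neq 0$'' and ``$\{t^n v\}$ independent'' are not separable issues in the way you suggest.

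The paper does not argue by homogeneity. It constructs an explicit $\C$-linear weight map $W_{\mathfrak{g}[t^{\pm 1}]}^0:\calA_\Theta^0(\mathfrak{g}^{\otimes 2}[t^{\pm 1}];\mathrm{Ad}(G)\times\Z)\to\calS_\Theta^0\hookrightarrow\C[t_1^{\pm 1},t_2^{\pm 1},t_3^{\pm 1}]$ (Proposition~\ref{prop:w0-well-defined}), chooses the specific family $\Theta(1,t,t^p)$ with all three decorations distinct, and checks by a direct degree argument on the resulting Laurent polynomials $f_p(1,x,x^3)$ that the images are linearly independent for $p\geq 3$ (Proposition~\ref{prop:1ttp}, Example~\ref{ex:235}). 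The surgery formula then gives $Z_\Theta^0(\Psi_1(\Theta(1,t,t^p)))=2[\Theta(1,t,t^p)]$, and the nonvanishing scalar weight (equal to $12$ for trivial $\mathrm{Ad}$-decoration) completes the argument. You should replace your homogeneity claim with this weight-map computation, or an equivalent explicit detection; identifying ``$v\neq 0$'' as the sole obstacle understates what is needed.
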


The nontrivial elements detected in Theorem~\ref{thm:main1-odd} are constructed by surgery along $\Theta$-graphs as in \cite{Wa1,Wa2}, which are analogous to Goussarov and Habiro's graph surgery in 3-manifolds (\cite{Gu,Hab}).
\begin{Cor}[Theorem~\ref{thm:infinite-rank-restate}]\label{cor:infinite-rank1}
Let $\Sigma^3=\Sigma(2,3,5)$. Let $\Emb_0(\Sigma^3,\Sigma^3\times S^1)$ denote the space of embeddings $\Sigma^3\to \Sigma^3\times S^1$ homotopic to the standard inclusion $\Sigma^3=\Sigma^\times\{1\}\subset \Sigma^3\times S^1$.
\begin{enumerate}
\item The abelianization of the group $\pi_0\Diff_0(\Sigma^3\times S^1)$ has countable infinite rank.
\item The set $\pi_0\Emb_0(\Sigma^3,\Sigma^3\times S^1)$ is infinite.
\end{enumerate}
\end{Cor}
An explicit subgroup of infinite rank detected in both Theorem~\ref{thm:main1-odd} and Corollary~\ref{cor:infinite-rank1}-1 is given in Proposition~\ref{prop:A-incl}.

Peter Teichner pointed out the following (\cite{Tei}). See also \cite{BW} for a detail.
\begin{Thm}[Teichner]\label{thm:teichner}
Let $C(Y)=\Diff(Y\times I,Y\times\{0\})$. The nontrivial elements of the group $\pi_0\Diff_0(\Sigma^3\times S^1)$ found in Corollary~\ref{cor:infinite-rank1} are included in the image of the natural map 
\[ \pi_0 C(\Sigma^3\times S^1)\to \pi_0\Diff_0(\Sigma^3\times S^1). \]
\end{Thm}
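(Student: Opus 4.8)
The plan is to show that each diffeomorphism $f\in\Diff_0(\Sigma^3\times S^1)$ produced by the $\Theta$-graph surgery of Theorem~\ref{thm:infinite-rank1} is pseudo-isotopic to the identity. Since the image of the natural map $\pi_0\calC(\Sigma^3\times S^1)\to\pi_0\Diff_0(\Sigma^3\times S^1)$ consists precisely of those isotopy classes that are pseudo-isotopic to the identity --- a diffeomorphism $g$ lies in the image if and only if there is a diffeomorphism of $(\Sigma^3\times S^1)\times I$ restricting to the identity on $(\Sigma^3\times S^1)\times\{0\}$ and to $g$ on $(\Sigma^3\times S^1)\times\{1\}$ --- this is exactly the assertion. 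First I would record that, by construction, $f$ is the identity outside a compact codimension-$0$ submanifold $N\subset\Sigma^3\times S^1$, namely a regular neighbourhood of the embedded framed $\Theta$-graph along which the surgery is performed, and that $f|_N$ is the standard graph-surgery diffeomorphism of the thickened $\Theta$-graph, which is the identity near $\partial N$. It therefore suffices to produce a diffeomorphism $F$ of $N\times I$ that is the identity on $N\times\{0\}\cup\partial N\times I$ and equals $f|_N$ on $N\times\{1\}$; extending $F$ by the identity over $(\Sigma^3\times S^1\setminus N)\times I$ then yields a pseudo-isotopy of $f$.

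The crux is the construction of this local $F$. I would use the description of the graph-surgery diffeomorphism as a barbell implantation in the sense of \cite[\S6]{BG}, that is, as an explicit finite composite of (generalized) finger and Whitney moves supported near $\Gamma$, or equivalently as the holonomy, via the isotopy-extension theorem, of a loop $\gamma$ of embeddings of a $2$-sphere (a ``bell'') dragged along a loop ``through the other bell''. The nontriviality of $f$ in $\Diff_0(\Sigma^3\times S^1)$ reflects the fact that $\gamma$ is nonzero in $\pi_1$ of the relevant embedding space, the obstruction being a one-dimension-lower Whitney/Dax-type intersection count --- which is also the source of the invariant $Z_\Theta^0$. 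The idea is then to pass to the $5$-manifold $N\times I$: the extra coordinate supplies exactly the room to perform the Whitney moves that are unavailable in the $4$-manifold, so that the appropriate suspension of $\gamma$ becomes null-homotopic in the corresponding space of embeddings into $N\times I$. A null-homotopy that is stationary near $\partial(N\times I)$, fed through the parametrized isotopy-extension theorem, produces the diffeomorphism $F$ of $N\times I$ with the required behaviour on $N\times\{0\}\cup\partial N\times I$ and on $N\times\{1\}$. Alternatively one may try to identify the pseudo-isotopy obstruction of $f$ --- a Hatcher--Wagoner / $\mathrm{Wh}_1$-type class, now computed one dimension up --- and show it vanishes, but extracting this from the explicit surgery description seems to require the same geometric input.

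I expect the main obstacle to be precisely this passage to $N\times I$: turning the heuristic ``the invariant detecting $f$ is a $4$-dimensional shadow that dies one dimension higher'' into an honest family of embedded configurations. Concretely one must (a) pin down the elementary moves defining the graph-surgery diffeomorphism together with their linking pattern in $N$, (b) exhibit, level by level in $N\times I$, a disjoint family of Whitney moves cancelling this pattern while remaining stationary near the boundary, and (c) check that the resulting ambient isotopy assembles into a diffeomorphism of $N\times I$ fixing $N\times\{0\}\cup\partial N\times I$ --- the corner and boundary bookkeeping being the delicate point. Once $F$ is in hand, extending it by the identity outside $N$ and invoking the identification of the image of $\pi_0\calC(\Sigma^3\times S^1)\to\pi_0\Diff_0(\Sigma^3\times S^1)$ with the pseudo-isotopy class of the identity completes the proof.
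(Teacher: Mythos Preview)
Your proposal outlines a plausible strategy but, as you yourself acknowledge, leaves the central step unexecuted: you do not actually construct the null-homotopy in $N\times I$ or the resulting diffeomorphism $F$, and the Whitney-move heuristic you sketch would require substantial work to make rigorous. As written this is a plan rather than a proof, and the anticipated obstacles (a)--(c) are precisely where the content lies.

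The paper's argument (Theorem~\ref{thm:pseudoisotopy}) takes a different and much more concrete route, bypassing embedding spaces and Whitney moves entirely in favor of handle theory. The key input is a description of $\Theta$-graph surgery in terms of an explicit family of framed links (Lemmas~\ref{lem:6link-I} and~\ref{lem:6link-II}): each Y-piece contributes six components, three parallel to the handle cores of $V$ and three coming from a small Borromean link tubed to their meridians, so that each pair $K_i\cup L_i$ is a Hopf link. One then attaches $2$-handles and $(d-1)$-handles along this family to the top face of $X\times I$, obtaining a family of cobordisms with boundary $E^\Gamma\sqcup -(X\times B_\Gamma)$. The decisive observation is that the $(d-2)$-sphere components form a \emph{constant} (parameter-independent) unlink bounding disjoint $(d-1)$-disks in $X$; this allows each attached $(d-1)$-handle to be traded for a $1$-handle carved out of $X\times I$ without altering the boundary. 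After this exchange every Hopf link corresponds to a cancelling $(1,2)$-handle pair, and one checks on the base fiber that these pairs can be slid off and removed in sequence. Hence the fiber is diffeomorphic to $X\times I$, yielding an element of $\Omega_{(d-3)k}(B\calC(X))$ whose restriction to the top face is the original bundle $\pi^\Gamma$.

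What each approach buys: the paper's handle-theoretic argument is fully explicit, works uniformly for all $d\geq 4$ and all trivalent graphs, and avoids both the embedding-calculus input and the boundary/corner bookkeeping you flagged as delicate. Your route, if it could be completed, would tie the result more directly to the barbell picture of \cite{BG} and perhaps to Hatcher--Wagoner obstructions, but the paper reaches the conclusion without needing any of that machinery.
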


\begin{Cor} 
If $\Sigma^3=\Sigma(2,3,5)$, we have $\pi_0 C(\Sigma^3\times S^1)\neq 0$. 
\end{Cor}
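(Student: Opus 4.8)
The plan is to read off the statement from Theorem~\ref{thm:infinite-rank1}(1) and Teichner's Theorem~\ref{thm:teichner}; no new geometric input is required, since both theorems are already available.

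First I would fix notation for the natural map appearing in Theorem~\ref{thm:teichner}: write
\[ c_*:\pi_0\calC(\Sigma^3\times S^1)\to \pi_0\Diff_0(\Sigma^3\times S^1) \]
for the map of pointed sets induced by restricting a pseudo-isotopy $F\in\Diff(\Sigma^3\times S^1\times I,\Sigma^3\times S^1\times\{0\})$ to the top boundary $\Sigma^3\times S^1\times\{1\}$. It sends the class of the identity pseudo-isotopy to the class of the identity diffeomorphism, so $c_*(0)=0$. Next, Theorem~\ref{thm:infinite-rank1}(1) says that the quotient $\pi_0\Diff_0(\Sigma^3\times S^1)/i_*\pi_0\Diff_0(\Sigma^3\times I,\partial)$ is at least countably infinite, hence nontrivial; I would pick a diffeomorphism $f$ whose class in this quotient is nontrivial. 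Any isotopy from $f$ to the identity inside $\Diff_0(\Sigma^3\times S^1)$ would descend to a trivialization of that class, so $[f]\neq 0$ in $\pi_0\Diff_0(\Sigma^3\times S^1)$ as well; this $f$ is one of the ``nontrivial elements found in Theorem~\ref{thm:infinite-rank1}'' to which Theorem~\ref{thm:teichner} refers.

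Finally, Theorem~\ref{thm:teichner} provides $\tilde f\in\pi_0\calC(\Sigma^3\times S^1)$ with $c_*(\tilde f)=[f]$. Were $\tilde f=0$, we would get $[f]=c_*(\tilde f)=c_*(0)=0$, contradicting $[f]\neq 0$; hence $\tilde f\neq 0$ and $\pi_0\calC(\Sigma^3\times S^1)\neq 0$. There is essentially no obstacle in this deduction — all of the difficulty is packaged into the two cited theorems (the $\Theta$-graph surgery construction and its $Z_\Theta^0$-detection behind Theorem~\ref{thm:infinite-rank1}, and Teichner's pseudo-isotopy argument behind Theorem~\ref{thm:teichner}) — and the only point needing care is the elementary one just used: that nontriviality modulo $i_*\pi_0\Diff_0(\Sigma^3\times I,\partial)$ forces nontriviality in $\pi_0\Diff_0(\Sigma^3\times S^1)$, and that $c_*$ preserves basepoints.
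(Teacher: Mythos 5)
Your proposal is correct and is exactly the deduction the paper intends: the corollary is stated immediately after Teichner's Theorem with no explicit proof, precisely because it follows by the argument you give — a nontrivial class from Theorem~\ref{thm:infinite-rank1}(1) lies in the image of the basepoint-preserving map $\pi_0\calC(\Sigma^3\times S^1)\to\pi_0\Diff_0(\Sigma^3\times S^1)$ by Theorem~\ref{thm:teichner}, forcing the source to be nontrivial.
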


The nontrivial elements of $\pi_0 C(\Sigma^3\times S^1)$ found here give examples of diffeomorphisms of $\Sigma^3\times S^1$ that are pseudo-isotopic to but not isotopic to the identity. Nontriviality results for $\pi_0 C(X)$ for other 4-manifolds $X$ are obtained more recently in \cite{Igu,Sin,Igu2}.

Our $\Theta$-graph surgery construction gives diffeomorphisms of $\Sigma^3\times S^1$ that are homotopic to the identity but smoothly nontrivial (Proposition~\ref{prop:homotopically-trivial}). We do not know whether these elements are also topologically nontrivial, though it seems likely to be so, considering the state in higher even dimensions (\cite[\S{7.7}]{BRW}). It would be worth mentioning that there are several works (\cite{Ru, BK, KM}) giving diffeomorphisms of many 4-manifolds $X$ whose isotopy classes are in the kernel of $\pi_0\Diff(X)\to \pi_0\mathrm{Top}(X)$. 

One can consider similar problems for $X=D^3\times S^1$.
The following result of \cite{BG} can be obtained as a corollary of Corollary~\ref{cor:infinite-rank1}.
\begin{Cor}[Budney--Gabai]\label{cor:infinite-rank2}
The abelian group
$\pi_0\Diff(D^3\times S^1,\partial)$ has a direct summand of countable infinite rank, which is included in the image of the natural map $\pi_0 C_\partial(D^3\times S^1)\to \pi_0\Diff(D^3\times S^1,\partial)$, where $C_\partial(Z)=\Diff(Z\times I,Z\times\{0\}\cup \partial Z\times I)$.
\end{Cor}
\begin{proof}
In Corollary~\ref{cor:infinite-rank1}, we still have a subgroup of countable infinite rank, if we replace $\pi_0\Diff_0(\Sigma^3\times S^1)$ with the image of a map
\[ \pi_0\Diff_\partial(D^3\times S^1)\to \pi_0\Diff_0(\Sigma^3\times S^1) \]
induced by the inclusion $D^3\times S^1\to \Sigma^3\times S^1$ to a small tubular neighborhood of $\{*\}\times S^1$. 
\end{proof}
What is proved in this paper is the infiniteness of the group $\pi_0\Diff(D^3\times S^1,\partial)$ and it is not proved in this paper that our group of infinite rank agrees with that of \cite{BG}. According to \cite{BG}, it follows from Corollary~\ref{cor:infinite-rank2} that the group $\pi_0\Emb_0(D^3,D^3\times S^1)$, which they proved to be abelian, has countable infinite rank. Hence there are many distinct spanning 3-disks of the unknot in $S^4$ that are not relatively isotopic to the standard one. A statement analogous to Theorem~\ref{thm:teichner} holds in this case as well, as also shown in \cite{BG} for their construction. We mention that the results for $\Sigma^3\times S^1$ as above in Corollary~\ref{cor:infinite-rank1} cannot be obtained by applying the method of \cite{BG}, at least in a direct way. It should be mentioned that there is also a remarkable result in \cite{BG} which says that the nontrivial subgroup found in \cite{BG} survives in $\pi_0\Diff_0(S^3\times S^1)$. 

For higher dimensions, it is known that $\pi_1\Sigma(2,3,5)\cong SL_2(\mathbb{F}_5)$ acts freely on $S^{4k-1}$ (\cite{LM}). 
\begin{Cor}[Corollary~\ref{cor:high-dim}]
Let $n\geq 7$ be an integer of the form $4k-1$ and let $\Sigma^n=S^n/\pi'$, where $\pi'=\pi_1\Sigma(2,3,5)$. Then the abelian group $\pi_{n-2}B\Diff_0(\Sigma^n\times S^1)$ has countable infinite rank.
\end{Cor}

\subsection{Plan of the paper}

In \S\ref{s:graph}, we describe the spaces of (anti-)symmetric tensors, which are the target spaces of the invariants considered in this paper.

In \S\ref{s:manifold}, we make assumptions on manifolds with local coefficient systems and define their configuration spaces, and propagator.

In \S\ref{s:bundle}, we make assumptions on manifold bundles with local coefficient systems and define their fiberwise configuration spaces, and propagator for families. Also, we consider an intersection form between chains of configuration space with local coefficients.

In \S\ref{s:invariant}, we define the invariant and prove its bordism invariance.

In \S\ref{s:sformula}, we construct fiber bundles by $\Theta$-graph surgery, following \cite{Wa1,Wa2}, and give a formula of the invariant for the bundles obtained by $\Theta$-graph surgery. The proof of the surgery formula boils down to Proposition~\ref{prop:normalization}. The outline of its proof is almost the same as given in \cite{Les1}. 

In \S\ref{s:support-I}, we describe a restriction on the value of the invariant for $\Sigma^n\times S^1$-bundles that has support in $\Sigma^n\times I$. Then we prove Corollary~\ref{cor:infinite-rank1}.

In \S\ref{s:pseudo-isotopy}, We show that the $X$-bundle obtained by $\Theta$-graph surgery extends to a bundle of pseudo-isotopy.

\subsection{Notations and conventions}

For a sequence of submanifolds $A_1,A_2,\ldots,A_r\subset W$ of a smooth Riemannian manifold $W$, we say that the intersection $A_1\cap A_2\cap \cdots\cap A_r$ is {\it transversal} if for each point $x$ in the intersection, the subspace $N_xA_1+N_xA_2+\cdots+N_xA_r\subset T_xW$ is the direct sum $N_xA_1\oplus N_xA_2\oplus\cdots\oplus N_xA_r$, where $N_xA_i$ is the orthogonal complement of $T_xA_i$ in $T_xW$ with respect to the Riemannian metric. 

For manifolds with corners and their (strata) transversality, we follow \cite[Appendix]{BT}.

For orientation convention for (sub)manifolds, we follow \cite[\S{1.5} and Appendix~D]{Wa2}. As in \cite{Wa2}, we represent an orientation of a manifold $M$ by a nowhere-zero section of $\bigwedge^{\dim{M}} TM$ and use the symbol $o(M)$ for orientation of $M$. When $\dim{M}=0$, we give an orientation of $M$ by a choice of sign $\pm 1$ at each point, as usual.
We orient the boundary of a manifold by the outward-normal-first convention. 

As chains in a manifold $X$, we consider $\C$-linear combinations of finitely many smooth maps from compact oriented manifolds with corners to $X$. We say that two chains $\sum n_i\sigma_i$ and $\sum m_j\tau_j$ ($n_i,m_j\in\C$, $\sigma_i,\tau_j$: smooth maps from compact manifolds with corners) are strata transversal if for every pair $i,j$, the terms $\sigma_i$ and $\tau_j$ are strata transversal. Strata transversality among two or more chains can be defined similarly. The intersection number $\langle \sigma,\tau\rangle_X$ of strata transversal two chains $\sigma=\sum n_i\sigma_i$ and $\tau=\sum m_j\tau_j$ with $\dim{\sigma_i}+\dim{\tau_j}=\dim{X}$ is defined by $\sum_{i,j}n_im_j(\sigma_i\cdot \tau_j)$, where we impose the orientation on the intersection by the wedge product of coorientations of the two pieces. We also consider intersection $\langle \sigma_1,\ldots,\sigma_n\rangle_X$ of strata transversal chains $\sigma_1,\ldots,\sigma_n$ for $n\geq 2$, which is defined similarly.

The diagonal $\{(x,x)\in X\times X\mid x\in X\}$ is denoted by $\Delta_X$. 

For a fiber bundle $\pi\colon E\to B$, we denote by $T^vE$ the (vertical) tangent bundle along the fiber $\mathrm{Ker}\,d\pi\subset TE$. Let $ST^vE$ denote the subbundle of $T^vE$ of unit spheres. Let $\partial^\fib E$ denote the fiberwise boundaries: $\bigcup_{b\in B}\partial(\pi^{-1}\{b\})$. 

\subsubsection*{Acknowledgments}

I am grateful to G.~Arone, B.~Botvinnik, R.~Budney, Y.~Eliashberg, D.~Gabai, H.~Konno, D.~Kosanovi\'{c}, A.~Kupers, F.~Laudenbach, C.~Lescop, A.~Lobb, M.~Powell, O.~Randal-Williams, K.~Sakai, T.~Sakasai, M.~Sato, T.~Satoh, T.~Shimizu, M.~Taniguchi, P.~Teichner, Y.~Yamaguchi, D.~Yuasa for helpful comments.

Part of this work was done through my stay at OIST, Laboratory of Mathematics Jean Leray, Institut Fourier, Max Planck Institute for Mathematics, University of Oregon, Stanford University in 2019. I thank the institutes for their generous support. Also, I thank the organizers of the workshops ``Four Manifolds: Confluence of High and Low Dimensions (Fields Institute, 2019)", ``HCM Workshop: Automorphisms of Manifolds (Hausdorff Center, 2019)", ``Workshop on 4-manifolds (MPIM, 2019)", which enabled me to communicate with many people about this work. This work was partially supported by JSPS Grant-in-Aid for Scientific Research 17K05252, 15K04880 and 26400089, and by RIMS, Kyoto University.

\mysection{Spaces of (anti-)symmetric tensors}{s:graph}

In this section, we define the target spaces of the invariants and give some ways to extract information from these spaces. 

\subsection{Anti-symmetric tensors for even dimensional manifolds}

Let $G=SU(m)$, $\mathfrak{g}=\mathrm{Lie}(G)\otimes \C=\mathfrak{sl}_m$. Generally, a complex semisimple Lie algebra $\mathfrak{g}$ has an $\mathrm{Ad}(G)$-invariant symmetric nondegenerate bilinear form $B\colon \mathfrak{g}^{\otimes 2}\to\C$. If $\mathfrak{g}=\mathfrak{sl}_m$, then $B$ can be given by $B(X,Y)=\Tr(XY)$. This is $\mathrm{Ad}(G)$-invariant in the sense that $B(\mathrm{Ad}(g)X,\mathrm{Ad}(g)Y)=\Tr(gXg^{-1} gYg^{-1})=\Tr(XY)=B(X,Y)$. We equip $\mathfrak{g}^{\otimes 2}$ with an algebra structure over $\C$ by the following product.
\[ (X_1\otimes Y_1)\cdot(X_2\otimes Y_2)=B(Y_1,X_2)\,X_1\otimes Y_2\]
The multiplicative unit for this product is the Casimir element 
$c_\mathfrak{g}=\sum_i e_i\otimes e_i^*$, where $\bvec{e}=\{e_i\}$ a basis of $\mathfrak{g}$, $\bvec{e}^*=\{e_i^*\}$ is the dual basis for $\bvec{e}$ with respect to $B(\cdot,\cdot)$: $B(e_i,e_j^*)=\delta_{ij}$. The two algebras $\g^{\otimes 2}$ and $\End(\g)$ are identified by $u\otimes v^*\mapsto (x\mapsto B(x,v^*)u)$. 
Then the product in $\mathfrak{g}^{\otimes 2}$ corresponds to the composition of endomorphisms, and $B$ corresponds to the trace. The element $\mathrm{Ad}(g)\in\mathrm{End}(\mathfrak{g})$ corresponds to $(1\otimes\mathrm{Ad}(g^*))(c_\mathfrak{g})=(\mathrm{Ad}(g)\otimes 1)(c_\mathfrak{g})\in\mathfrak{g}^{\otimes 2}$, which we denote by the same symbol $\mathrm{Ad}(g)$. An $\mathrm{Ad}(G)$-invariant skew-symmetric 3-form
$\Tr\colon  \mathfrak{g}^{\otimes 3}\to \C$
is defined by the following formula.
\[ \Tr(X\otimes Y\otimes Z)=B([X,Y],Z) \]
We refer the reader to \cite[Ch.XVII.1]{Kas} for the details.
The external tensor product $\mathfrak{g}^{\boxtimes 2}=\mathfrak{g}\boxtimes \mathfrak{g}$ is the left $G\times G$-module $\mathfrak{g}^{\otimes 2}$ defined by the action
\[ (g,h)(X\otimes Y)=\mathrm{Ad}(g)X\otimes \mathrm{Ad}(h)Y=gXg^{-1}\otimes hYh^{-1}. \]

\begin{Def}\label{def:A}
\begin{enumerate}
\item Let $\calA_\Theta^\even(\C[t^{\pm 1}];\Z)=\bigwedge^3\C[t^{\pm 1}]/\Z$, where the alternating tensor product is over $\C$, and the quotient by $\Z$ is given by the relation:
\[ \begin{split}
  a\wedge b\wedge c&=t^na\wedge t^nb\wedge t^nc\quad (a,b,c\in \C[t^{\pm 1}],\,n\in \Z).
\end{split} \]

\item Let $H$ be a subgroup of $G$ and let 
\[ \calA_\Theta^\even(\mathfrak{g}^{\otimes 2}[t^{\pm 1}];H\times \Z)=\tbigwedge^3(\mathfrak{g}^{\otimes 2}[t^{\pm 1}])/(H^{\times 2}\times\Z), \]
where the alternating tensor product is over $\C$, and the quotient by $H^{\times 2}\times\Z$ is given by the relations:
\[ \begin{split}
  a\wedge b\wedge c&=t^na\wedge t^nb\wedge t^nc,\\
  a\wedge b\wedge c&=(g,g')\, a\wedge (g,g')\, b\wedge (g,g')\, c \quad (a,b,c\in\mathfrak{g}^{\otimes 2}[t^{\pm 1}],\,g,g'\in H).
\end{split} \]

\item Let $\calS_\Theta^\even=\bigwedge^3 \C[t^{\pm 1}]/{\sim}$, $t^p\wedge t^q\wedge t^r\sim t^{-p}\wedge t^{-q}\wedge t^{-r}$. This space is embedded into a $\C$ subspace of $\C[t_1^{\pm 1},t_2^{\pm 1},t_3^{\pm 1}]$ by the map
\[ 
[t^p\wedge t^q\wedge t^r]\mapsto 
\sum_{\sigma\in\mathfrak{S}_3}\mathrm{sgn}(\sigma)\bigl(t_{\sigma(1)}^p t_{\sigma(2)}^q t_{\sigma(3)}^r+
t_{\sigma(1)}^{-p} t_{\sigma(2)}^{-q} t_{\sigma(3)}^{-r}\bigr).
\]
\end{enumerate}
\end{Def}

The spaces $\calA_\Theta^\even$ can be considered as the spaces of decorated $\Theta$-graphs considered modulo the invariance relation at trivalent vertices. We will denote the element $\alpha \wedge \beta\wedge \gamma$ of the space $\bigwedge^3 \C[t^{\pm 1}]$ or $\bigwedge^3 \mathfrak{g}^{\otimes 2}[t^{\pm 1}]$ by $\Theta(\alpha,\beta,\gamma)$ to distinguish from elements of $\calS_\Theta^\even$.  Let 
$W_{\C[t^{\pm 1}]}^\even(\Theta(t^a,t^b,t^c))\in \calS_\Theta^\even$ be the element defined by the following formula.
\[ \begin{split}
W_{\C[t^{\pm 1}]}^\even(\Theta(t^a,t^b,t^c))=[t^{b-a}\wedge t^{a-c}\wedge t^{c-b}] 
\end{split}\]
For $P t^a,Q t^b,R t^c$ ($P,Q,R\in \mathfrak{g}^{\otimes 2}$), let $\Theta(Pt^a,Qt^b,Rt^c)$ denote the element $P t^a\wedge Q t^b\wedge R t^c$ in $\calA_\Theta^\even(\mathfrak{g}^{\otimes 2}[t^{\pm 1}];G\times\Z)$, and let 
$W^\even_{\mathfrak{g}[t^{\pm 1}]}(\Theta(P t^a,Q t^b,R t^c))\in \calS_\Theta^\even$
be defined by the following formula:
\[ \begin{split}
&W_{\mathfrak{g}[t^{\pm 1}]}^\even(\Theta(P t^a,Q t^b,R t^c))\\
&\hspace{5mm}=(\Tr\otimes \Tr)\,\sigma_\Theta\,(P\otimes Q\otimes R)[t^{b-a}\wedge t^{a-c}\wedge t^{c-b}], 
\end{split}\]
where $\sigma_\Theta\colon (\mathfrak{g}^{\otimes 2})^{\otimes 3}\to (\mathfrak{g}^{\otimes 3})^{\otimes 2}$ is the permutation of factors that is determined by the combinatorial structure of the $\Theta$-graph:
\[ 
(x_P\otimes y_P)\otimes (x_Q\otimes y_Q)\otimes (x_R\otimes y_R)
\stackrel{\sigma_\Theta}{\mapsto}(x_P\otimes x_Q\otimes x_R)\otimes (y_R\otimes y_Q\otimes y_P).
\]
The coefficient $(\Tr\otimes \Tr)\,\sigma_\Theta\,(P\otimes Q\otimes R)$ is precisely the $\mathfrak{sl}_m$-weight system of the $\Theta$-graph with decorations $P,Q,R$ on edges (\cite[\S{2.4}]{BN}, see also \cite[\S{6.2}]{CDM}). 

\begin{Prop}\label{prop:w0-well-defined}
The $W_{\C[t^{\pm 1}]}^\even$ and $W_{\mathfrak{g}[t^{\pm 1}]}^\even$ for $\Theta(t^a,t^b,t^c)$ and $\Theta(Pt^a,Qt^b,Rt^c)$ as above induce well-defined $\C$-linear maps
\[\begin{split}
 &W_{\C[t^{\pm 1}]}^\even\colon \calA_\Theta^\even(\C[t^{\pm 1}];\Z)\to \calS_\Theta^\even,\mbox{ and}\\
 &W_{\mathfrak{g}[t^{\pm 1}]}^\even\colon \calA_\Theta^\even(\mathfrak{g}^{\otimes 2}[t^{\pm 1}];G\times \Z)\to \calS_\Theta^\even,\mbox{ respectively.}
\end{split} \]
\end{Prop}
\begin{proof}
First, we see that $W_{\C[t^{\pm 1}]}^\even$ is well-defined. The part $[t^{b-a}\wedge t^{a-c}\wedge t^{c-b}]$ is invariant under the action of $\Z$ on $\bigwedge^3\C[t^{\pm 1}]$, and the transpositions $a\leftrightarrow b$, $b\leftrightarrow c$, $c\leftrightarrow a$ turn this into 
\[\begin{split}
 &[t^{a-b}\wedge t^{b-c}\wedge t^{c-a}]=[t^{b-a}\wedge t^{c-b}\wedge t^{a-c}]=-[t^{b-a}\wedge t^{a-c}\wedge t^{c-b}],\\
 &[t^{c-a}\wedge t^{a-b}\wedge t^{b-c}]=[t^{a-c}\wedge t^{b-a}\wedge t^{c-b}]=-[t^{b-a}\wedge t^{a-c}\wedge t^{c-b}],\\
 &[t^{b-c}\wedge t^{c-a}\wedge t^{a-b}]=[t^{c-b}\wedge t^{a-c}\wedge t^{b-a}]=-[t^{b-a}\wedge t^{a-c}\wedge t^{c-b}]
\end{split} \]
in $\calS_\Theta^\even$. Hence $W_{\C[t^{\pm 1}]}^\even$ is well-defined on $\calA_\Theta^\even(\C[t^{\pm 1}];\Z)$.

Also, it follows from the $\mathfrak{S}_3$-antisymmetry and the $\mathrm{Ad}(G)$-invariance of $\Tr$ that the coefficient part $(\Tr\otimes \Tr)\,\sigma_\Theta\,(P\otimes Q\otimes R)$ is $\mathfrak{S}_3$-symmetric and $\mathrm{Ad}(G)^{\times 2}$-invariant. This together with the $\Z$-invariance and $\mathfrak{S}_3$-antisymmetry of the part $[t^{b-a}\wedge t^{a-c}\wedge t^{c-b}]$ proves that $W_{\mathfrak{g}[t^{\pm 1}]}^\even$ is well-defined.
\end{proof}

\begin{Exa}
We have $W_{\C[t^{\pm 1}]}^\even\bigl(\Theta(1,t,t^p)\bigr)=[t\wedge t^{-p}\wedge t^{p-1}]$. This element corresponds to the following element in $\C[t_1^{\pm 1},t_2^{\pm 1},t_3^{\pm 1}]$.
\[ \begin{split}
&t_1t_2^{-p}t_3^{p-1} 
-t_1t_3^{-p}t_2^{p-1}
-t_2t_1^{-p}t_3^{p-1}
+t_2t_3^{-p}t_1^{p-1}
+t_3t_1^{-p}t_2^{p-1}
-t_3t_2^{-p}t_1^{p-1}\\
+&t_1^{-1}t_2^pt_3^{-p+1} 
-t_1^{-1}t_3^pt_2^{-p+1} 
-t_2^{-1}t_1^pt_3^{-p+1} 
+t_2^{-1}t_3^pt_1^{-p+1} 
+t_3^{-1}t_1^pt_2^{-p+1} 
-t_3^{-1}t_2^pt_1^{-p+1} 
\end{split} \]
We denote this polynomial by $f_p(t_1,t_2,t_3)$.
\begin{Prop}\label{prop:1ttp}
$[\Theta(1,t,t^p)]$ ($p\geq 3$) spans a countable infinite dimensional $\C$-subspace of $\calA_\Theta^\even(\C[t^{\pm 1}];\Z)$. 
\end{Prop}
\begin{proof}
We have $f_p(1,x,x^3)=x^{3p-1}-x^{3p-2}-x^{2p+1}+x^{2p-3}+x^{p+2}-x^{p-3}
-x^{-p+3}+x^{-p-2}+x^{-2p+3}-x^{-2p-1}+x^{-3p+1}
$, and for $p\geq 3$, its maximal degree term is $x^{3p-1}$, minimal degree term is $x^{-3p+1}$. Thus 
$\{f_p(1,x,x^3)\mid p\geq 3\}$ is linearly independent over $\C$ in $\C[x^{\pm 1}]$.
\end{proof}
\end{Exa}

\begin{Prop}\label{prop:W(Ad)}
For $x,y\in SU(2)$, $a,b\in \Z$, we have
\[ \begin{split}
&W_{\mathfrak{g}[t^{\pm 1}]}^\even\bigl(\Theta(1,\mathrm{Ad}(x)t^a,\mathrm{Ad}(y)t^b)\bigr)\\
&=2\bigl(\Tr(\mathrm{Ad}(x))\Tr(\mathrm{Ad}(y))-\Tr(\mathrm{Ad}(xy))\bigr)[t^{a}\wedge t^{-b}\wedge t^{b-a}]
\end{split} \]
\end{Prop}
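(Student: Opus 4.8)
The plan is to split the assertion into its $\C[t^{\pm1}]$-part, which is immediate, and a single scalar coefficient, which is the only thing requiring computation. Feeding $\Theta(1\cdot t^0,\mathrm{Ad}(x)t^a,\mathrm{Ad}(y)t^b)$ into the defining formula of $W_{\mathfrak{g}[t^{\pm1}]}^0$, the monomial factor is $[t^{a-0}\wedge t^{0-b}\wedge t^{b-a}]=[t^a\wedge t^{-b}\wedge t^{b-a}]$, which already matches the right-hand side. So it remains to evaluate
\[ C(x,y):=(\Tr\otimes\Tr)\,\sigma_\Theta\,(c_{\mathfrak{g}}\otimes\mathrm{Ad}(x)\otimes\mathrm{Ad}(y)), \]
where $c_{\mathfrak{g}}$ is the Casimir (the unit ``$1$'' of $\mathfrak{g}^{\otimes2}$) and $\mathrm{Ad}(g)\in\mathfrak{g}^{\otimes2}$ is the element $(\mathrm{Ad}(g)\otimes1)(c_{\mathfrak{g}})=\sum_j(\mathrm{Ad}(g)e_j)\otimes e_j^*$; the target is to show $C(x,y)=2\bigl(\Tr(\mathrm{Ad}(x))\Tr(\mathrm{Ad}(y))-\Tr(\mathrm{Ad}(xy))\bigr)$.

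To compute $C(x,y)$ I would fix a $B$-orthonormal basis $e_1,e_2,e_3$ of $\mathfrak{g}=\mathfrak{sl}_2$, which exists because $B$ is a nondegenerate symmetric form over $\C$; then $e_j^*=e_j$. Substituting $c_{\mathfrak{g}}=\sum_i e_i\otimes e_i$, $\mathrm{Ad}(x)=\sum_j(\mathrm{Ad}(x)e_j)\otimes e_j$, $\mathrm{Ad}(y)=\sum_k(\mathrm{Ad}(y)e_k)\otimes e_k$, applying $\sigma_\Theta$, and then $\Tr\otimes\Tr$ (recall $\Tr(u\otimes v\otimes w)=B([u,v],w)$ on each factor) should give
\[ C(x,y)=\sum_{i,j,k}B\bigl([e_i,\mathrm{Ad}(x)e_j],\mathrm{Ad}(y)e_k\bigr)\,B\bigl([e_k,e_j],e_i\bigr). \]
Next, since $B([\,\cdot\,,\,\cdot\,],\cdot)$ is totally antisymmetric (by $\mathrm{Ad}(G)$-invariance of $B$ together with antisymmetry of the bracket), on this basis it equals $\kappa\,\epsilon_{abc}$ for the single structure constant $\kappa=B([e_1,e_2],e_3)$. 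Expanding both brackets in the basis and using $\epsilon_{kji}=\epsilon_{ikj}$ followed by the contraction $\sum_i\epsilon_{ipq}\epsilon_{ikj}=\delta_{pk}\delta_{qj}-\delta_{pj}\delta_{qk}$ should collapse the triple sum to
\[ C(x,y)=\kappa^2\bigl(\Tr(\mathrm{Ad}(x)\mathrm{Ad}(y))-\Tr(\mathrm{Ad}(x))\Tr(\mathrm{Ad}(y))\bigr). \]
Finally $\mathrm{Ad}(x)\mathrm{Ad}(y)=\mathrm{Ad}(xy)$, and $\kappa^2=-2$; the last value I would check from one explicit orthonormal basis, e.g. $e_1=h/\sqrt2$, $e_2=(e+f)/\sqrt2$, $e_3=i(e-f)/\sqrt2$ in the standard basis $h=\mathrm{diag}(1,-1)$, $e=E_{12}$, $f=E_{21}$ with $B(X,Y)=\Tr(XY)$, for which $[e_1,e_2]=-i\sqrt2\,e_3$, hence $\kappa=-i\sqrt2$. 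Combining the pieces yields $C(x,y)=-2\bigl(\Tr(\mathrm{Ad}(xy))-\Tr(\mathrm{Ad}(x))\Tr(\mathrm{Ad}(y))\bigr)$, which is the claim.

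The step I expect to be the real obstacle is not any of the algebra but keeping the normalizations consistent with the paper's conventions: exactly which element of $\mathfrak{g}^{\otimes2}$ is meant by ``$1$'' and by ``$\mathrm{Ad}(g)$'', the precise form of $\sigma_\Theta$ (so that the two $\Tr$-factors read $e_i\otimes\mathrm{Ad}(x)e_j\otimes\mathrm{Ad}(y)e_k$ and $e_k\otimes e_j\otimes e_i$ in the right orders), and the value $\kappa^2=-2$ — the coefficient $2$ and its sign in the final formula ride entirely on these. The $\epsilon$-contraction and the remaining identities are routine.
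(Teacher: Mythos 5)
Your computation is correct, and the final sign bookkeeping ($\kappa^2=-2$, hence $C(x,y)=2(\Tr\mathrm{Ad}(x)\Tr\mathrm{Ad}(y)-\Tr\mathrm{Ad}(xy))$) checks out. However, your route is genuinely different from the paper's. The paper's proof cites the diagrammatic $\mathfrak{sl}_2$-relation of Chmutov--Varchenko (equation~(\ref{eq:sl2-relation})), which says in $\End_\C(\mathfrak{g}^{\otimes2})$ that $b^*\circ b=2(\mathbbm{1}-\sigma)$ ($b$ the bracket, $\sigma$ the flip), and applies it to the edge of $\Theta$ decorated by $c_{\mathfrak{g}}$: the $\mathbbm{1}$-term splits the remaining two edges into two disjoint circles giving $\Tr(\mathrm{Ad}(x))\Tr(\mathrm{Ad}(y))$, and the $\sigma$-term joins them into one circle giving $\Tr(\mathrm{Ad}(x)\mathrm{Ad}(y))=\Tr(\mathrm{Ad}(xy))$, so the coefficient $2$ is simply read off the relation. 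You instead unwind $(\Tr\otimes\Tr)\circ\sigma_\Theta$ in a $B$-orthonormal basis of $\mathfrak{sl}_2$, reduce to the identity $B([e_a,e_b],e_c)=\kappa\,\epsilon_{abc}$, and do the $\sum_i\epsilon_{ipq}\epsilon_{ikj}=\delta_{pk}\delta_{qj}-\delta_{pj}\delta_{qk}$ contraction, then fix $\kappa^2=-2$ from an explicit basis. Your approach is self-contained and avoids importing the result of [CVa], at the cost of a basis-dependent computation and a last check of the normalization $\kappa^2$; the paper's diagrammatic argument is shorter here, is manifestly basis-free, and is the one that would scale to evaluating weight systems on larger graphs, but it relies on correctly remembering the coefficient in the $\mathfrak{sl}_2$-relation, which depends on the normalization $B=\Tr$ — exactly the constant your $\kappa^2$ computation pins down directly.
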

\begin{proof}
For $\mathfrak{g}=\mathfrak{su}(2)\otimes\C=\mathfrak{sl}_2$, the following identity holds in $\mathrm{End}_\C(\mathfrak{g}^{\otimes 2})$ (\cite{CVa}).
\begin{equation}\label{eq:sl2-relation}
 \includegraphics{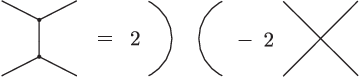} 
\end{equation}
Here, the left hand side is the composition of the Lie bracket $b\colon \mathfrak{g}^{\otimes 2}\to \mathfrak{g}$ and its dual $b^*\colon \mathfrak{g}\to \mathfrak{g}^{\otimes 2}$. The two terms in the right hand side represent the identity morphism and the transposition $x\otimes y\mapsto y\otimes x$, respectively. Then, we have
\[ \sigma_\Theta(1\otimes\mathrm{Ad}(x)\otimes\mathrm{Ad}(y))
=\Tr(\varphi\circ (\mathrm{Ad}(x^*)\otimes\mathrm{Ad}(y^*))), \]
where $\varphi\in\End_\C(\mathfrak{g}^{\otimes 2})$ is the map on the left hand side of (\ref{eq:sl2-relation}).

If we apply the relation (\ref{eq:sl2-relation}) to the $\Theta$-graph with edges decorated by $1$, $\mathrm{Ad}(x)$, $\mathrm{Ad}(y)$, then we obtain a disjoint union of circles with decorations. The first term in the right hand side of (\ref{eq:sl2-relation}) gives disjoint union of two circles decorated by $\mathrm{Ad}(x)$ and $\mathrm{Ad}(y)$, respectively. The second term in the right hand side of (\ref{eq:sl2-relation}) gives one circle decorated by $\mathrm{Ad}(x)\mathrm{Ad}(y)=\mathrm{Ad}(xy)$. 
\end{proof}

\mysection{Manifolds and configuration spaces with local coefficient system}{s:manifold}

We make an assumption on the manifold $X$ with a local coefficient system $A$, and we compute the homology of the configuration space $\bConf_2(X)$ of two points with the local coefficient system induced from $A\boxtimes A$. Based on the computation of the homology, we define propagator, which is an analogue of ``equivariant propagator" in \cite{Les1}. We will define in the next section propagator in families of configuration spaces, which plays an important role in the definition of the main perturbative invariant.

\subsection{Acyclic complex}\label{ss:acyclic-complex}

Let $X$ be a manifold with a basepoint $x_0$ and a universal cover $\widetilde{X}$. The space $\widetilde{X}$ can be considered as the set of pairs $(x,[\gamma])$, where $x$ is a point of $X$ and $[\gamma]$ is the homotopy class relative to the endpoints of a path $\gamma\colon [0,1]\to X$ from $x$ to the basepoint $x_0\in X$. We take a cellular chain complex $C_*(X)$ for a CW decomposition of $X$ and take $C_*(\widetilde{X})$ for the CW decomposition of $\widetilde{X}$ compatible with that of $X$. Let $S_*(X)$, $S_*(\widetilde{X})$ be the complexes of piecewise smooth singular chains in $X$ and $\widetilde{X}$, respectively. The complexes $C_*(\widetilde{X})$ and $S_*(\widetilde{X})$ are naturally $\C[\pi]$-modules through the covering transformations. We assume that the coefficients are in $\C$, unless otherwise noted. 

Let $\pi$ denote $\pi_1(X)$, $A$ be a left $\C[\pi]$-module, and let $\rho_A\colon \pi\to \mathrm{End}_\C(A)$ be the corresponding $\C$-linear representation. Let $R$ be $\C$ or $\C[t^{\pm 1}]$. We assume that $A$ has finite rank over a ring $R$ and has a nondegenerate $\C[\pi]$-invariant symmetric $R$-bilinear form $B(\cdot,\cdot)\colon A\otimes_R A\to R$. Let $c_A\in A\otimes_R A$ be the $\C[\pi]$-invariant 2-tensor defined by the following formula.
\[ c_A=\sum_i e_i\otimes e_i^* \]
Here, $\{e_i\}$ is an $R$-basis of $A$, $\{e_i^*\}$ is the dual basis for $\{e_i\}$ with respect to $B$.

Let $C_*(X;A)=C_*(\widetilde{X})\otimes_{\C[\pi]} A$, $S_*(X;A)=S_*(\widetilde{X})\otimes_{\C[\pi]} A$. The boundary operators of these complexes are given by $\partial_A=\partial\otimes \mathbbm{1}$ (see e.g., \cite[Ch.VI]{Wh}, \cite[II-Ch.6]{Hatt} etc. for homology of local coefficients). Especially, when $X=\Sigma^n\times S^1$, $C_*(X;A)=C_*(\widetilde{X})\otimes_{\C[\pi'\times \Z]}A$ has a structure of free $(\C[t^{\pm 1}],\C[t^{\pm 1}])$-bimodule if we let $\pi'=\pi_1(\Sigma^n)$ and take $A=A_1\otimes_\C\C[t^{\pm 1}]$ for some $\C[\pi']$-module $A_1$. In this case, we have a symmetric $\C[t^{\pm 1}]$-bilinear form $B(\cdot,\cdot)\colon A\otimes_R A\to R$, where $R=\C[t^{\pm 1}]$, and the invariant 2-tensor $c_{A_1}\in A_1^{\otimes 2}[t^{\pm 1}]=A\otimes_R A$, where we identify $R\otimes_R R$ with $R$ by $t^a\otimes t^b=t^{a-b}$. We make the following assumption.

\begin{Assum}\label{assum:acyclic}
$C_*(X;A)$ (and $S_*(X;A)$) is acyclic, i.e., $H_*(X;A)=0$.
\end{Assum}

\begin{Lem}\label{lem:tensor-acyclic}
Let $\Lambda=\C[t^{\pm 1}]$. Under Assumption~\ref{assum:acyclic}, the following hold.
\begin{enumerate}
\item $C_*(X\times X;A\boxtimes_\C A)=C_*(X;A)\otimes_\C C_*(X;A)$ is acyclic.
\item If moreover $X=\Sigma^n\times S^1$ and $A=A_1\otimes_\C\Lambda$, then $C_*(X\times X;A\boxtimes_\Lambda A)=C_*(X;A)\otimes_{\Lambda} C_*(X;A)$ is acyclic.
\end{enumerate}
\end{Lem}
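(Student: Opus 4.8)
The statement is a Künneth-type acyclicity result, and the plan is to deduce it from Assumption~\ref{assum:acyclic} together with the chain-level identifications already recorded in the excerpt. First I would observe that $C_*(X;A)=C_*(\widetilde X)\otimes_{\C[\pi]}A$ is a bounded complex of $\C$-vector spaces in each degree, and that by Assumption~\ref{assum:acyclic} it is acyclic, i.e.\ exact. For part~(1), the key point is the identification $C_*(X\times X;A\otimes_\C A)\cong C_*(X;A)\otimes_\C C_*(X;A)$: the universal cover of $X\times X$ is $\widetilde X\times\widetilde X$ with $\pi_1=\pi\times\pi$, the cellular chains satisfy $C_*(\widetilde X\times\widetilde X)=C_*(\widetilde X)\otimes_\C C_*(\widetilde X)$ as $\C[\pi\times\pi]$-modules, and tensoring down over $\C[\pi\times\pi]=\C[\pi]\otimes_\C\C[\pi]$ against $A\otimes_\C A$ factors as the $\C$-tensor product of the two factors $C_*(\widetilde X)\otimes_{\C[\pi]}A$. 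So it suffices to show that the $\C$-tensor product of two acyclic complexes of $\C$-vector spaces is acyclic.

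The latter is a standard homological fact, and I would just invoke it: over a field every module is flat, so the Künneth spectral sequence degenerates and $H_*(P_*\otimes_\C Q_*)\cong\bigoplus_{i+j=*}H_i(P_*)\otimes_\C H_j(Q_*)$, which vanishes when either factor is acyclic. One small thing to check is that the complexes are bounded below (they are nonnegatively graded) so that no convergence subtlety arises; since we are over a field this is automatic anyway. Alternatively, and perhaps cleaner for a self-contained write-up, one can note that an acyclic bounded-below complex of vector spaces splits as a direct sum of elementary complexes $(\cdots\to 0\to V\xrightarrow{\ \mathrm{id}\ }V\to0\to\cdots)$, and the $\C$-tensor product of such an elementary complex with any complex is again a direct sum of shifted elementary complexes, hence acyclic; this avoids even mentioning spectral sequences.

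For part~(2), when $X=\Sigma^n\times S^1$ and $A=A_1\otimes_\C\Lambda$ with $\Lambda=\C[t^{\pm1}]$, the situation is the same after replacing the ground field $\C$ by the ring $\Lambda$, which is a PID (indeed a Laurent polynomial ring in one variable, a localization of $\C[t]$). As recorded in the excerpt, $C_*(X;A)$ is a complex of free $\Lambda$-modules, and $C_*(X\times X;A\otimes_\Lambda A)\cong C_*(X;A)\otimes_\Lambda C_*(X;A)$ by the same factorization argument as in part~(1), now taking the quotient by $\C[\pi'\times\Z]$ and keeping track of the two $\Lambda$-module structures (the extra $S^1$-factor contributes the $t^{\pm1}$'s, and $A\otimes_\Lambda A = A_1^{\otimes2}[t^{\pm1}]$ as already noted). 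Since each $C_k(X;A)$ is free, hence flat, over $\Lambda$, the Künneth theorem over a PID gives a short exact sequence $0\to\bigoplus_{i+j=n}H_i\otimes_\Lambda H_j\to H_n(\text{tensor complex})\to\bigoplus_{i+j=n-1}\mathrm{Tor}_1^\Lambda(H_i,H_j)\to0$; both outer terms vanish because $H_*(X;A)=0$ (Assumption~\ref{assum:acyclic} applies verbatim, the homology being computed with the same $A$), so the middle term vanishes too.

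\textbf{Main obstacle.} There is no serious obstacle here; the only thing requiring a little care is the precise bookkeeping of module structures in the identification $C_*(X\times X;A\otimes A)\cong C_*(X;A)\otimes C_*(X;A)$ — in part~(2) one must be sure that tensoring $C_*(\widetilde X\times\widetilde X)$ down over $\C[\pi'\times\Z]^{\times2}$ against $A\otimes_\C A$ and then identifying the diagonal $\Lambda$-action correctly reproduces $C_*(X;A)\otimes_\Lambda C_*(X;A)$ rather than $\otimes_\C$. Once that identification is in hand, flatness of free modules over $\Lambda$ (and exactness of $\otimes_\C$ over the field $\C$) reduces everything to the classical Künneth formula, so the proof is short.
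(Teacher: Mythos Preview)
Your proposal is correct and follows essentially the same approach as the paper: both parts are reduced to the K\"unneth formula, with part~(1) using that $\C$ is a field and part~(2) using that $\Lambda=\C[t^{\pm 1}]$ is a PID together with freeness of the chain modules. The paper's proof is terser---it simply cites the K\"unneth formula for (1) and writes out the K\"unneth short exact sequence over $\Lambda$ for (2)---whereas you additionally spell out the chain-level identification $C_*(X\times X;A\otimes A)\cong C_*(X;A)\otimes C_*(X;A)$ and offer an alternative elementary splitting argument for (1), but these are elaborations of the same method rather than a different route.
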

\begin{proof}
The assertion 1 follows from the K\"{u}nneth formula for $\C$-modules. For 2, since $\Lambda$ is a PID, the exact sequence for $\Lambda$-modules (K\"{u}nneth formula (e.g., \cite[Theorem~VI.3.2]{CE}))
\[ 0\to \bigoplus_{p+q=n}H_p(C)\otimes_\Lambda H_q(C)\to H_n(C\otimes_\Lambda C)\to \bigoplus_{p+q=n-1}\mathrm{Tor}_1^\Lambda(H_p(C),H_q(C))\to 0\]
($C=C_*(X;A)$) holds. Then 2 is an immediate corollary of this.
\end{proof}

\begin{Rem}
\begin{enumerate}
\item To apply the K\"{u}nneth formula, certain restriction on the coefficient ring or on the chain complex $C$ is necessary, and it is not always possible to replace the tensor product in Lemma~\ref{lem:tensor-acyclic} (2) with that of $\C[\pi]$-modules, instead of $\Lambda$-modules. For example, for $\pi=\Z\times \Z$, the K\"{u}nneth formula for $\C[\pi]$-modules fails.
\item When $X$ is a closed manifold, $\chi(X)=0$ is necessary for the complex $C_*(X;\mathfrak{g})$ of the adjoint representation of a flat $G$-connection to be acyclic. Indeed, the Euler characteristic of this complex depends only on the dimensions of the modules $C_i(X;\mathfrak{g})$, and not on the twisted differential. Thus the identity
\[ \sum_i (-1)^i \dim C_*(X;\mathfrak{g}) = \chi(X)\dim{\mathfrak{g}} \]
holds. If $C_*(X;\mathfrak{g})$ is acyclic, $\chi(X)$ must be 0. For $\dim{X}=4$, $\chi(X)=0$ is satisfied if $X$ is a homology $S^3\times S^1$, but not if $X$ is a homology $S^4$. 
\item Although we identify $\mathrm{End}_R(A)$ with $A\otimes_R A$ through $B(\cdot,\cdot)$ to simplify notation, it would be more natural to think of this as $A\otimes_R A^*$. For example, the diagonal action of $\Z$ on $R\otimes_R R$ is converted into the conjugation $x\otimes y\mapsto tx\otimes yt^{-1}$ on $R\otimes_R R^*=\End_R(R)$, which is trivial. 
\end{enumerate}
\end{Rem}

\begin{Exa}\label{ex:235}
Let $\Sigma^3=\Sigma(2,3,5)$ and $\mathfrak{g}=\mathfrak{su}(2)\otimes\C$. The fundamental group $\pi'$ of $\Sigma^3$ has the following presentation.
\[ \pi'=\langle x_1,x_2,x_3,h\mid h\mbox{ central},\, x_1^2=h,\, x_2^3=h^{-1},\,x_3^5=h^{-1},\,x_1x_2x_3=1\rangle \]
There are exactly two conjugacy classes of irreducible representations $\rho\colon \pi'\to SU(2)$. One is such that
\begin{equation}\label{eq:235}
 \begin{split}
&\rho(h)=-I, \quad
\rho(x_1)=\mathrm{diag}(i,-i),\\ 
&\rho(x_2)=\left(\begin{array}{cc}
	\frac{1}{2}-\alpha i & \beta\\
	-\beta & \frac{1}{2}+\alpha i
\end{array}\right)\sim \mathrm{diag}(e^{\pi i/3},e^{-\pi i/3}),\quad\\
&\rho(x_3)=\left(\begin{array}{cc}
	\alpha-\frac{1}{2}i & -\beta i\\
	-\beta i & \alpha+\frac{1}{2}i
\end{array}\right)\sim \mathrm{diag}(e^{\pi i/5},e^{-\pi i/5}),
\end{split} 
\end{equation}
where $\alpha=\cos{\frac{\pi}{5}}=\frac{1+\sqrt{5}}{4}$, $\beta=\frac{-1+\sqrt{5}}{4}$, and $\sim$ is the conjugacy relation (\cite{Sa1, Bod} etc.). For the local coefficient system $\mathfrak{g}_\rho=\mathfrak{g}$ on $\Sigma^3$ for the adjoint action of $\rho$, we have $H^0(\Sigma^3;\mathfrak{g}_\rho)=0$ by the irreducibility of the representation. Moreover, it is known that $H^1(\Sigma^3;\mathfrak{g}_\rho)=0$ holds for any irreducible $SU(2)$-representation of $\Sigma^3=\Sigma(p,q,r)$ (\cite{FS, Bod} etc.). This together with Poincar\'{e} duality implies the acyclicity.
\begin{equation}\label{eq:acyclic}
 H_*(\Sigma^3;\mathfrak{g}_\rho)=0 
\end{equation}
By taking the tensor product over $\C$ with the homology of the universal cover of $S^1$ with \emph{twisted} coefficients\footnote{The acyclicity (\ref{eq:acyclic2}) would also be obtained even when we took untwisted coefficients here. However, in that case, the values of the invariant for our construction would be all trivial. Also, it is possible to obtain the vanishing (\ref{eq:acyclic2}) by replacing $\C[t^{\pm 1}]$ with $\C(t)$ (the field of rational functions) here, instead of requiring the acyclicity (\ref{eq:acyclic}). In that case, the invariant $Z_\Theta^\even$ would become surprisingly weak.}
\[ H_*(S^1;\C[t^{\pm 1}])=H_*(\R^1;\Z)\otimes_{\Z[t^{\pm 1}]}\C[t^{\pm 1}]\cong\C, \]
we have
\begin{equation}\label{eq:acyclic2}
 H_*(\Sigma^3\times S^1;\mathfrak{g}_\rho[t^{\pm 1}])=0. 
\end{equation}
Also, by mapping the generator of $\pi_1(S^1)=\Z$ into the center $\Z_2=\{\pm 1\}$ of $SU(2)$, an irreducible extension $\rho_1\colon \pi_1(\Sigma^3\times S^1)\to SU(2)$ is obtained. Since the adjoint action of the center $\{\pm 1\}$ of $SU(2)$ on $\mathfrak{g}_{\rho_1}$ is trivial, $\mathfrak{g}_{\rho_1}$ is isomorphic as a $\pi'\times \Z$-module to $\mathfrak{g}_\rho\boxtimes_\C \C$, and we have 
\begin{equation}\label{eq:acyclic-rho1}
 H_*(\Sigma^3\times S^1;\mathfrak{g}_{\rho_1})=H_*(\Sigma^3;\mathfrak{g}_\rho)\otimes_\C H_*(S^1;\C)=0. 
\end{equation}
The same results hold for another irreducible representation $\rho\colon \pi'\to SU(2)$ defined by the same formula as (\ref{eq:235}) with $\alpha=\cos{\frac{3\pi}{5}}=\frac{1-\sqrt{5}}{4}$, $\beta=\frac{1+\sqrt{5}}{4}$. 

In relation to the local coefficient system $\mathfrak{g}_\rho[t^{\pm 1}]$ above, let us compute the value of 
\[ 
W_{\mathfrak{g}[t^{\pm 1}]}^\even
\bigl(\Theta(1,\mathrm{Ad}(\alpha_3)\,t,\mathrm{Ad}(\alpha_3^2)\,t^p)\bigr) \quad (\alpha_3=\rho(x_3))
 \]
 for example. The map $W_{\mathfrak{g}[t^{\pm 1}]}^\even$ was defined in Proposition~\ref{prop:w0-well-defined}, $\mathrm{Ad}(\cdot)$ was defined before Definition~\ref{def:A}. By applying Proposition~\ref{prop:W(Ad)}, the value of this weight is 
\begin{equation}\label{eq:weight}
 2\bigl(w(\alpha_3)\,w(\alpha_3^2)-w(\alpha_3^3)\bigr)[t\wedge t^{-p}\wedge t^{p-1}], 
\end{equation}
where $w(x)$ is the weight $W_{\mathfrak{g}[t^{\pm 1}]}^\even$ of the oriented circle decorated by $\mathrm{Ad}(x)$, or the trace of $\mathrm{Ad}(\alpha)$. If $V$ is the standard representation of $SU(2)$, the adjoint $SU(2)$-representation $\mathfrak{g}$ can be considered as the codimension 1 subrepresentation of $V\otimes V^*=\mathfrak{gl}_2$, where the $SU(2)$-action is given by $v\otimes w^*\mapsto gv\otimes gw^*$, $gw^*(\cdot)=w^*(g^{-1}(\cdot))$ ($g\in SU(2)$). This shows that 
\[ w(x)=\Tr(x)\Tr(x^*)-1=|\Tr(x)|^2-1. \]
By $w(\alpha_3)=\frac{1+\sqrt{5}}{2}$, $w(\alpha_3^2)=w(\alpha_3^3)=\frac{1-\sqrt{5}}{2}$, the value of the weight (\ref{eq:weight}) is
\[ (-3+\sqrt{5})\,[t\wedge t^{-p}\wedge t^{p-1}]. \]
Here, by Proposition~\ref{prop:1ttp}, we know that $[t\wedge t^{-p}\wedge t^{p-1}]\neq 0$ for $p\geq 3$.
Also, replacing $\alpha_3$ with $I$ in the above formula gives the weight of $\Theta(1,t,t^p)$, and its value is 
\[ 12\,[t\wedge t^{-p}\wedge t^{p-1}]. \]
Hence $[\Theta(1,t,t^p)]$ and $[\Theta(1,\mathrm{Ad}(\alpha_3)\,t,\mathrm{Ad}(\alpha_3^2)\,t^p)]$ are linearly independent over $\Q$.
\qed
\end{Exa}

\begin{Prop}\label{prop:A-incl}
Let $\calA_\Theta^\even(\mathfrak{g}^{\otimes 2};\rho(\pi'))=\tbigwedge^3(\mathfrak{g}^{\otimes 2})/\rho(\pi')^{\times 2}$. 
Then the set $\{\Theta(1,t,t^p)\mid p\geq 3\}$ is linearly independent in the cokernel of the natural embedding
\[ \begin{split}
i_{\mathfrak{g}}\colon \calA_\Theta^\even(\mathfrak{g}^{\otimes 2};\rho(\pi'))\to \calA_\Theta^\even(\mathfrak{g}^{\otimes 2}[t^{\pm 1}];\rho(\pi')\times \Z).
\end{split}\]
\end{Prop}
\begin{proof}
The result follows immediately from Proposition~\ref{prop:1ttp} and Example~\ref{ex:235}. Namely, we consider the following commutative diagram:
\[ \xymatrix{
  & \calA_\Theta^\even(\C[t^{\pm 1}];\Z) \ar[d] \ar[rd]^-{W_{\C[t^{\pm 1}]}^\even} & \\
 \calA_\Theta^\even(\mathfrak{g}^{\otimes 2};\rho(\pi')) \ar@<-2pt>[r]_-{i_{\mathfrak{g}}} &
  \calA_\Theta^\even(\mathfrak{g}^{\otimes 2}[t^{\pm 1}];\rho(\pi')\times \Z) \ar[r]_-{W_{\mathfrak{g}[t^{\pm 1}]}^\even} \ar@<-2pt>[l]_-{t=1}&
 \calS_\Theta^\even 
} \]
where $W_{\mathfrak{g}[t^{\pm 1}]}^\even\circ i_{\mathfrak{g}}=0$ since $[1\wedge 1\wedge 1]=0$. Since $W_{\C[t^{\pm 1}]}^\even$ embeds $\{\Theta(1,t,t^p)\mid p\geq 3\})$ to a linearly independent set by Proposition~\ref{prop:1ttp}, the result follows.
\end{proof}

\subsection{Propagator in a fiber for $n$ odd}

Let $X$ be a closed parallelizable $(n+1)$-manifold with a local coefficient system $A$ satisfying the acyclicity Assumption~\ref{assum:acyclic}. Let $\Delta_X$ be the diagonal of $X\times X$. The configuration space of two points of $X$ is 
\[ \Conf_2(X)=X\times X-\Delta_X.\]
The Fulton--MacPherson compactification of $\Conf_2(X)$ is
\[ \bConf_2(X)=B\ell(X\times X,\Delta_X). \]
The right hand side is the analogue of the blow-up for real smooth manifolds, which roughly replaces $\Delta_X$ with its normal sphere bundle in $X\times X$. The boundary $\partial\bConf_2(X)$ is identified with the normal sphere bundle of $\Delta_X$, which is canonically identified with the unit tangent bundle $ST(X)$. Since $X$ is parallelizable, there is a diffeomorphism $\partial \bConf_2(X)\cong S^n\times X$.

We also denote by $A^{\boxtimes 2}$ the pullback of the local coefficient system $A^{\boxtimes 2}=A\boxtimes_R A$, $R=\C$ or $\C[t^{\pm 1}]$, on $X\times X$ to $\bConf_2(X)$, and by $A^{\otimes 2}$ its restriction to $\partial\bConf_2(X)$, on which $\pi$ acts diagonally. Also, we write $\partial_A$ for $\partial_{A^{\boxtimes 2}}$ for simplicity. There is an $R$-submodule of $A^{\otimes 2}$ spanned by $c_A$, and the diagonal action of the holonomy on $\Delta_X$ is trivial on it. Since the natural map $H_*(X;R)=H_*(X;\C)\otimes R\to H_*(\Delta_X;A^{\otimes 2})$ given by $\sigma\mapsto \sigma\otimes c_A$ is a section of the projection, the $R$-module $H_*(\Delta_X;A^{\otimes 2})$ has a direct summand isomorphic to $H_*(X;R)$, where $R$ is the untwisted coefficient. We make the following assumption.

\begin{Assum}\label{assum:delta-trivial}
$H_*(\Delta_X;A^{\otimes 2})\cong H_*(X;R)$ and this is generated over $R$ by $\sigma\otimes c_A$ for $\C$-cycles $\sigma$ of $X$. 
\end{Assum}

We will see later in Proposition~\ref{prop:Sigma-delta-trivial} that Assumption~\ref{assum:delta-trivial} is satisfied by the local coefficient system $A=\mathfrak{g}_\rho[t^{\pm}]$ on $X=\Sigma(2,3,5)\times S^1$ of Example~\ref{ex:235}. The following is an analogue of \cite[Proposition~2.12]{Les1}.

\begin{Lem}\label{lem:H(Conf)}
For an odd integer $n\geq 3$, let $X$ be a parallelizable $\Z$ homology $S^n\times S^1$. Let $K$ be an oriented knot in $X$ that generates $H_1(X;\Z)$, and $\Sigma$ be an oriented $n$-submanifold of $X$ that generates $H_n(X;\Z)$ and such that $\langle \Sigma, K\rangle=1$. Under Assumptions~\ref{assum:acyclic} and \ref{assum:delta-trivial}, we have
\[ H_i(\bConf_2(X);A^{\boxtimes 2})=\left\{
\begin{array}{ll}
	R[ST(*)\otimes c_A] & (i=n)\\
	R[ST(K)\otimes c_A] & (i=n+1)\\
	R[ST(\Sigma)\otimes c_A] & (i=2n)\\
	R[ST(X)\otimes c_A] & (i=2n+1)\\
	0 & (\mbox{otherwise})
\end{array}\right.\]
where for an oriented submanifold $\sigma$ of $X$, we denote by $ST(\sigma)$ the restriction of the unit sphere bundle $ST(X)$ to $\sigma$, for which $ST(\sigma)\otimes c_A$ is an $A^{\boxtimes 2}$-cycle by Assumption~\ref{assum:delta-trivial}. 
\end{Lem}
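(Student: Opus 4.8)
The plan is to compute $H_*(\bConf_2(X);A^{\otimes 2})$ from the long exact sequence of the pair $(X\times X,\Conf_2(X))$, or equivalently (after the blow-up) of the pair $(\bConf_2(X),\partial\bConf_2(X))$, combined with the acyclicity input. First I would note that $\bConf_2(X)$ is homotopy equivalent to $X\times X\setminus\Delta_X = \Conf_2(X)$, and that excision identifies $H_*(X\times X,\Conf_2(X);A^{\otimes 2})$ with $H_*$ of a tubular neighborhood of $\Delta_X$ rel its boundary, i.e.\ with the Thom isomorphism image $H_{*-(n+1)}(\Delta_X;A^{\otimes 2}\otimes \mathrm{or})$. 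Since $X$ is parallelizable the normal bundle of $\Delta_X$ is trivial (isomorphic to $TX$), so this is just $H_{*-(n+1)}(\Delta_X;A^{\otimes 2})$, which by Assumption~\ref{assum:delta-trivial} equals $H_{*-(n+1)}(X;R)$, generated by $\sigma\otimes c_A$. Because $X$ is a $\Z$-homology $S^n\times S^1$, this is $R$ in degrees $*-(n+1)\in\{0,1,n,n+1\}$ — generated respectively by $[*]\otimes c_A$, $[K]\otimes c_A$, $[\Sigma]\otimes c_A$, $[X]\otimes c_A$ — and zero otherwise.

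Next I would feed this into the long exact sequence
\[
\cdots\to H_i(\Conf_2(X);A^{\otimes 2})\to H_i(X\times X;A^{\otimes 2})\to H_i(X\times X,\Conf_2(X);A^{\otimes 2})\to H_{i-1}(\Conf_2(X);A^{\otimes 2})\to\cdots.
\]
By Lemma~\ref{lem:tensor-acyclic}, $H_*(X\times X;A^{\otimes 2})=H_*(C_*(X;A)\otimes C_*(X;A))=0$ (in both the $R=\C$ and $R=\C[t^{\pm1}]$ cases, using that $\Lambda$ is a PID). Hence the connecting map is an isomorphism for all $i$:
\[
H_i(\bConf_2(X);A^{\otimes 2})\cong H_i(\Conf_2(X);A^{\otimes 2})\cong H_{i+1}(X\times X,\Conf_2(X);A^{\otimes 2})\cong H_{i+1-(n+1)}(X;R).
\]
Reading off the four nonzero degrees $i+1-(n+1)\in\{0,1,n,n+1\}$, i.e.\ $i\in\{n,n+1,2n,2n+1\}$, and tracing the Thom/excision isomorphism back through the blow-up (so that the generator $\sigma\otimes c_A\in H_*(\Delta_X)$ corresponds to the class of the normal sphere bundle $ST(\sigma)\otimes c_A$ in $\partial\bConf_2(X)\cong ST(X)$, pushed into $\bConf_2(X)$) gives exactly the asserted generators $[ST(*)\otimes c_A]$, $[ST(K)\otimes c_A]$, $[ST(\Sigma)\otimes c_A]$, $[ST(X)\otimes c_A]$.

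The main point requiring care — and the step I expect to be the real obstacle — is the identification of the generators, not the mere computation of the ranks. One must check that under the composite of the connecting homomorphism and the Thom isomorphism, the class $\sigma\otimes c_A$ on the diagonal really is sent to the boundary sphere-bundle class $ST(\sigma)\otimes c_A$; this uses that the local system $A^{\otimes 2}$ restricted to $\Delta_X$ contains the trivial summand spanned by $c_A$ on which the diagonal holonomy acts trivially (so $\sigma\otimes c_A$ is a genuine cycle with $R$-coefficients), and that the parallelization trivializes the relevant normal/Thom data coherently. A secondary subtlety is bookkeeping the degree shift and the orientation local system $\mathrm{or}$ of the normal bundle in the Thom isomorphism; since $TX$ is trivial this is harmless, but it should be stated. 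Once these identifications are in place, the list of nonzero groups and their generators follows immediately from $H_*(X;R)\cong H_*(S^n\times S^1;\C)\otimes R$.
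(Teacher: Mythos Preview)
Your proposal is correct and follows essentially the same route as the paper: both use the long exact sequence of the pair $(X\times X,\Conf_2(X))$, invoke the vanishing $H_*(X\times X;A^{\otimes 2})=0$ from Lemma~\ref{lem:tensor-acyclic}, apply excision to a tubular neighborhood of $\Delta_X$, trivialize the normal bundle via parallelizability, and reduce to $H_{i-n}(X;R)$ via Assumption~\ref{assum:delta-trivial}. Your discussion of the generator identification and the orientation bookkeeping is slightly more explicit than the paper's, but the argument is the same.
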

\begin{proof}
We consider the exact sequence
\[\begin{split}
 &\,H_{i+1}(X^{\times 2};A^{\boxtimes 2})\to H_{i+1}(X^{\times 2},\Conf_2(X);A^{\boxtimes 2})\to H_i(\Conf_2(X);A^{\boxtimes 2})\\
\to &\,H_i(X^{\times 2};A^{\boxtimes 2}),
\end{split}\]
where we have $H_*(X^{\times 2};A^{\boxtimes 2})=0$ by (\ref{eq:acyclic}) and Lemma~\ref{lem:tensor-acyclic}. 
Letting $N(\Delta_X)$ be a closed tubular neighborhood of $\Delta_X$, we have
\[ H_{i+1}(X^{\times 2},\Conf_2(X);A^{\boxtimes 2})\cong H_{i+1}(N(\Delta_X),\partial N(\Delta_X);A^{\otimes 2}) \]
by excision. Since $X$ is parallelizable and the normal bundle of $\Delta_X$ can be canonically identified with $TX$, the normal bundle of $\Delta_X$ is trivial. By Assumption~\ref{assum:delta-trivial}, we have
\[\begin{split}
& H_{i+1}(N(\Delta_X),\partial N(\Delta_X);A^{\otimes 2})
=H_{n+1}(D^{n+1},\partial D^{n+1};R)\otimes_R H_{i-n}(\Delta_X;A^{\otimes 2})\\
&\cong H_{n+1}(D^{n+1},\partial D^{n+1};R)\otimes_R H_{i-n}(X;R)\cong H_{i-n}(X;R).
\end{split} \]
Here, $H_{i-n}(X;R)$ is rank 1 for $i-n=0,1,n,n+1$, and its generator is $*,K,\Sigma,X$, respectively.
\end{proof}

\begin{Lem}
Let $n, X, K, A$ be as in Lemma~\ref{lem:H(Conf)}.
Let $s_{\tau_0}\colon X\to ST(X)$ be the section given by the normalization of the first vector of a framing $\tau_0$ of $X$. Then we have
\[ \begin{split}
	H_{n+1}(\partial \bConf_2(X);A^{\otimes 2})=R[ST(K)\otimes c_A] \oplus R[s_{\tau_0}(X)\otimes c_A].
\end{split} \]
\end{Lem}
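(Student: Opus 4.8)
The plan is to reduce everything to the K\"unneth formula, using that $\partial\bConf_2(X)$ is the unit tangent bundle $ST(X)$, which the parallelization $\tau_0$ trivializes as $S^n\times X$.

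First I would record the coefficient system on the boundary. Since $\partial\bConf_2(X)$ is the normal sphere bundle of $\Delta_X$ in $X\times X$, it is canonically $ST(X)$, with bundle projection $p:ST(X)\to\Delta_X\cong X$, and the restriction of $A^{\otimes 2}$ to $\partial\bConf_2(X)$ is $p^*$ of the local system on $\Delta_X$ carrying the diagonal holonomy; by Assumption~\ref{assum:delta-trivial} this is homologically $A^{\otimes 2}$ on $X$ with $H_*(\Delta_X;A^{\otimes 2})\cong H_*(X;R)$ generated by the classes $\sigma\otimes c_A$. As $n\ge 3$ the fibre $S^n$ is simply connected, so under $ST(X)=S^n\times X$ we have $\pi_1(ST(X))=\pi_1(X)$ and $p=\mathrm{pr}_2$, whence $C_*(ST(X);A^{\otimes 2})\cong C_*(S^n;R)\otimes_R C_*(X;A^{\otimes 2})$ as complexes of $R$-modules (the $S^n$ factor carries constant coefficients $R$).

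Next, since $R$ is a PID ($\C$ or $\C[t^{\pm 1}]$) and $H_*(S^n;R)$ is $R$-free, the K\"unneth theorem gives $H_*(ST(X);A^{\otimes 2})\cong H_*(S^n;R)\otimes_R H_*(X;A^{\otimes 2})$ with no Tor contribution. By Assumption~\ref{assum:delta-trivial} and the hypothesis that $X$ is a $\Z$ homology $S^n\times S^1$, $H_q(X;A^{\otimes 2})\cong H_q(X;R)$ is free of rank $1$ for $q\in\{0,1,n,n+1\}$, generated by $\ast\otimes c_A$, $K\otimes c_A$, $\Sigma\otimes c_A$, $X\otimes c_A$ respectively, and $0$ otherwise. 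In degree $n+1$ the only nonzero summands of $\bigoplus_{p+q=n+1}H_p(S^n;R)\otimes_R H_q(X;A^{\otimes 2})$ are $(p,q)=(0,n+1)$ and $(p,q)=(n,1)$, because $H_p(S^n;R)$ vanishes unless $p\in\{0,n\}$ and then $q\in\{n+1,1\}$. This yields $H_{n+1}(\partial\bConf_2(X);A^{\otimes 2})\cong R\,[\{\mathrm{pt}\}\times X]\otimes c_A\ \oplus\ R\,[S^n\times K]\otimes c_A$.

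Finally I would identify the two generators with the named geometric cycles. Under $ST(X)=S^n\times X$ the subset $S^n\times K=p^{-1}(K)$ is exactly the restriction $ST(K)$ of $ST(X)$ to $K$, an $(n+1)$-manifold, so the $(n,1)$-summand is $R[ST(K)\otimes c_A]$; here $c_A$ being holonomy-invariant is what makes $ST(K)\otimes c_A$ an honest cycle with coefficients in $A^{\otimes 2}$. For the $(0,n+1)$-summand one must take $\mathrm{pt}\in S^n$ to be the point that corresponds under $\tau_0$ to its first vector field, so that $\{\mathrm{pt}\}\times X$ is the image of the section $s_{\tau_0}:X\to ST(X)$; the summand is then $R[s_{\tau_0}(X)\otimes c_A]$. (Equivalently, since $ST(X)$ is a trivial $S^n$-bundle the Gysin sequence of $S^n\to ST(X)\xrightarrow{p} X$ has vanishing Euler class, hence splits into split short exact sequences $0\to H_{i-n}(X;A^{\otimes 2})\to H_i(ST(X);A^{\otimes 2})\to H_i(X;A^{\otimes 2})\to 0$, the splitting induced by $s_{\tau_0}$, giving the same answer.) The step needing care is precisely this matching of the purely homological direct-sum decomposition with the cycles $ST(K)$ and $s_{\tau_0}(X)$, which depends on the chosen trivialization of $ST(X)$ by $\tau_0$; the rest is a mechanical application of K\"unneth together with Assumptions~\ref{assum:acyclic} and \ref{assum:delta-trivial}.
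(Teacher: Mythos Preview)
Your proof is correct and follows essentially the same approach as the paper: the paper's own proof is a single sentence invoking the trivialization $\partial\bConf_2(X)\cong S^n\times X$ induced by $\tau_0$, the K\"unneth formula, and Assumption~\ref{assum:delta-trivial}. Your write-up simply unpacks these ingredients in more detail and explicitly matches the K\"unneth summands with the geometric cycles $ST(K)\otimes c_A$ and $s_{\tau_0}(X)\otimes c_A$.
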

\begin{proof}
This follows from the trivialization $\partial \bConf_2(X)\cong S^n\times X$ induced by $\tau_0$, the K\"{u}nneth formula for $R$-modules, and Assumption~\ref{assum:delta-trivial}.
\end{proof}

Since $H_{n+2}(\bConf_2(X);A^{\boxtimes 2})=0$ by Lemma~\ref{lem:H(Conf)}, we have the following exact sequence of $R$-modules.
\[\begin{split}
0&\to H_{n+2}(\bConf_2(X),\partial\bConf_2(X);A^{\boxtimes 2})\\
&\stackrel{r}{\to} H_{n+1}(\partial\bConf_2(X);A^{\otimes 2})
\stackrel{i}{\to} H_{n+1}(\bConf_2(X);A^{\boxtimes 2})
\end{split} \]

\begin{Cor}\label{cor:prop-exists}
Let $n,X, K, A$ be as in Lemma~\ref{lem:H(Conf)}.
There exists an element $\calO_{\tau_0}(X,A)\in R$ such that 
\[ i([s_{\tau_0}(X)\otimes c_A])=\calO_{\tau_0}(X,A)[ST(K)\otimes c_A].
 \]
Hence we have
\[ [s_{\tau_0}(X)\otimes c_A] - \calO_{\tau_0}(X,A)[ST(K)\otimes c_A]\in \mathrm{Ker}\,i=\mathrm{Im}\,r. \]
\end{Cor}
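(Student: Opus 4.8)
The plan is to derive this formally from the two preceding lemmas together with the long exact sequence of the pair $(\bConf_2(X),\partial\bConf_2(X))$. First I would recall that, by the (unnamed) lemma computing $H_{n+1}(\partial\bConf_2(X);A^{\otimes 2})$, this $R$-module is free on the two classes $[ST(K)\otimes c_A]$ and $[s_{\tau_0}(X)\otimes c_A]$, whereas by Lemma~\ref{lem:H(Conf)} the $R$-module $H_{n+1}(\bConf_2(X);A^{\otimes 2})$ is free of rank one on $[ST(K)\otimes c_A]$. The one thing to verify here is that the cycle $ST(K)\otimes c_A$ named as the generator in Lemma~\ref{lem:H(Conf)} already lies on $\partial\bConf_2(X)$, so that the inclusion-induced map $i$ carries the \emph{boundary} class $[ST(K)\otimes c_A]$ precisely onto the generator of the interior homology; this is seen by unwinding the excision and K\"unneth identifications used in the proof of Lemma~\ref{lem:H(Conf)}, which exhibit the normal $S^n$-bundle over $K$ as the cycle representing the image of $K\in H_1(X;R)$.

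Granting this, $i\bigl([s_{\tau_0}(X)\otimes c_A]\bigr)$ lies in the free rank-one $R$-module $H_{n+1}(\bConf_2(X);A^{\otimes 2})=R[ST(K)\otimes c_A]$, so it is uniquely of the form $\calO(X,A)\,[ST(K)\otimes c_A]$ for some $\calO(X,A)\in R$; this is the definition of $\calO(X,A)$. Then, using $i\bigl([ST(K)\otimes c_A]\bigr)=[ST(K)\otimes c_A]$,
\[ i\bigl([s_{\tau_0}(X)\otimes c_A]-\calO(X,A)[ST(K)\otimes c_A]\bigr)=\calO(X,A)[ST(K)\otimes c_A]-\calO(X,A)[ST(K)\otimes c_A]=0, \]
so the difference lies in $\mathrm{Ker}\,i$. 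Finally, the displayed three-term sequence of $R$-modules is exact — it is a segment of the long exact sequence of the pair, exact on the left because $H_{n+2}(\bConf_2(X);A^{\otimes 2})=0$ by Lemma~\ref{lem:H(Conf)} (note $n+2\notin\{n,n+1,2n,2n+1\}$ for $n\geq 3$) — and hence $\mathrm{Ker}\,i=\mathrm{Im}\,r$, which yields the last assertion.

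The only step that needs genuine attention, as opposed to routine diagram-chasing over the PID $R$, is the bookkeeping that the symbol $ST(K)\otimes c_A$ denotes compatible homology classes on $\partial\bConf_2(X)$ and on $\bConf_2(X)$, i.e.\ that $i$ is the identity on it; once that is settled the corollary is immediate.
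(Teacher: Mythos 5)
Your argument is correct and follows essentially the same route as the paper, which simply invokes Lemma~\ref{lem:H(Conf)} to conclude that $i([s_{\tau_0}(X)\otimes c_A])$ lies in the rank-one module $R[ST(K)\otimes c_A]$ and then reads off the second claim from the already-displayed exact sequence. The one point you flag for attention — that $ST(K)\otimes c_A$ is genuinely a cycle supported on $\partial\bConf_2(X)\cong ST(X)$ and that $i$ therefore acts as the identity on its class — is a fair observation and is implicit in the paper's notation (``for a submanifold $\sigma$ of $X$, $ST(\sigma)$ denotes the restriction of $ST(X)$ to $\sigma$''), so nothing is missing.
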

\begin{proof}
This follows from $H_{n+1}(\bConf_2(X);A^{\boxtimes 2})=R[ST(K)\otimes c_A]$ of Lemma~\ref{lem:H(Conf)}.
\end{proof}

\begin{Def}[Propagator]
Let $X, K, A$ be as in Lemma~\ref{lem:H(Conf)}.
A {\it propagator} is an $(n+2)$-chain $\omega$ of $\bConf_2(X)$ with coefficients in $A^{\boxtimes 2}$ that is transversal to the boundary and that satisfies
\[ \partial_A\, \omega = s_{\tau_0}(X)\otimes c_A - \calO_{\tau_0}(X,A)\,ST(K)\otimes c_A. \]
\end{Def}

\begin{Rem}
For 3-manifolds with $b_1=1$, Lescop described $\calO_{\tau_0}(X,\C(t))$ in terms of the logarithmic derivative of the Alexander polynomial (\cite{Les1}). 
\end{Rem}

\begin{Exa}\label{ex:O=0}
Let $X=\Sigma^n\times S^1$ ($\Sigma^n$: parallelizable homology $S^n$) and $\pi'=\pi_1(\Sigma^n)$. We consider $A=A_1\otimes_\C\Lambda$ of \S\ref{ss:acyclic-complex} as a $\C[\pi]$-module by the $\pi=\pi'\times \Z$-action 
\[ (g\times n)(v\otimes f(t))=\rho_{A_1}(g)(v)\otimes t^nf(t).\]
If $C_*(\Sigma^n;A_1)$ is acyclic, so is $C_*(X;A)=C_*(\Sigma^n;A_1)\otimes_\C C_*(S^1;\Lambda)$. Let $\tau_0$ be a framing of $X$ such that the first vector is tangent to $S^1$ and the remaining $n$ vectors are tangent to $\Sigma^n$. 

\begin{Prop}\label{prop:O=0}
Under the above assumption, we have $ \calO_{\tau_0}(X,A)=0$.
\end{Prop}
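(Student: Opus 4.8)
The plan is to exploit the product structure $X = \Sigma^n \times S^1$ to produce an explicit propagator whose boundary does not involve the $ST(K)$ term, forcing $\calO(X,A) = 0$. Write $S^1 = \R/\Z$, let $K = \{*\} \times S^1 \subset X$ be the obvious knot generating $H_1(X;\Z)$, and take $\Sigma = \Sigma^n \times \{1\}$. The starting observation is that in $\bConf_2(X)$, the deleted diagonal behaves like a twisted version of the $S^1$-case: for two points $(x,s), (y,u)$ with $x,y$ in a fixed fundamental domain of $\widetilde{\Sigma^n}$, the pair is determined up to the configuration-space blow-up by the relative position $s - u \in S^1$ together with the $\pi'$-component. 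Because $A = A_1 \otimes_\C \Lambda$ and the $\Z$-action is by $t^n$, the whole analysis of $H_*(\bConf_2(X); A^{\otimes 2})$ factors as the ($\pi'$-twisted) data on $\Sigma^n$ tensored over $\Lambda$ with the configuration data on $S^1$.

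First I would set up the decomposition $\bConf_2(X) \simeq$ (something built from $\bConf_2(S^1)$ and $\Sigma^n \times \Sigma^n$ blown up along its diagonal) at the level that matters for locating propagators, using that $X$ is parallelizable with the product framing $\tau_0$ (the given $\tau_0$ can be taken to be the product of a framing of $\Sigma^n$ and the standard framing of $S^1$). Next, recall the elementary $S^1$ fact: in $\bConf_2(S^1)$ with the infinite-cyclic local system $\Lambda$, the chain $\{(s,u) : u \in [s, s+\tfrac12]\}$ (a 2-chain, with appropriate $\Lambda$-coefficient bookkeeping, i.e. recording the winding $t$) has boundary equal to the section $s_{\tau_0}(S^1) \otimes 1$ minus a term supported on the full $S^1$ with coefficient $(1 - t)^{-1}$ — the point being that over $\Lambda$ this "defect" is invertible and can be absorbed, whereas over $\Z$ it is the source of the classical $\calO$. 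Equivalently: the relevant obstruction class for $S^1$ itself vanishes over $\C[t^{\pm 1}]$ because $H_*(S^1;\Lambda) = 0$ and the propagator equation becomes solvable with no residual $ST(K)$-term. Then I would transport this: using the acyclicity of $C_*(\Sigma^n; A_1)$ (Assumption~\ref{assum:acyclic}), choose a contracting chain homotopy $D$ on $C_*(\Sigma^n; A_1^{\otimes 2})$ for the relevant sub-complex, tensor it with the $S^1$-propagator, and cross-check via the Künneth decomposition from Lemma~\ref{lem:tensor-acyclic}(2) and Lemma~\ref{lem:H(Conf)} that the resulting $(n+2)$-chain $P$ on $\bConf_2(X)$ satisfies $\partial_A P = s_{\tau_0}(X)\otimes c_A$ exactly, with no $ST(K)$-component. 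Comparing with the defining equation of the propagator and Corollary~\ref{cor:prop-exists}, and using that $[ST(K) \otimes c_A]$ is a free generator of $H_{n+1}(\bConf_2(X); A^{\otimes 2})$ over $R = \Lambda$, this yields $\calO(X,A) = 0$.

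The cleanest route may instead be homological rather than chain-level: by Corollary~\ref{cor:prop-exists}, $\calO(X,A)$ is the coefficient expressing $i_*[s_{\tau_0}(X) \otimes c_A]$ in terms of $i_*[ST(K)\otimes c_A] = [ST(K)\otimes c_A]$ inside $H_{n+1}(\bConf_2(X); A^{\otimes 2})$. One shows $[s_{\tau_0}(X)\otimes c_A]$ is actually zero in $H_{n+1}(\bConf_2(X);A^{\otimes 2})$ (not merely a multiple of the $K$-class). For this, use that $s_{\tau_0}(X) = s_{\tau'} (\Sigma^n) \times s_{\mathrm{std}}(S^1)$ split according to $\tau_0 = \tau' \times \mathrm{std}$; under the Künneth isomorphism and the computation of $H_*(\bConf_2(\Sigma^n);A_1^{\otimes 2})$ (which is concentrated in a single degree coming from the diagonal, since $\Sigma^n$ is a homology sphere and $A_1$ is acyclic), the class $[s_{\tau_0}(X)\otimes c_A]$ lands in a summand that is killed either by acyclicity of the $\Sigma^n$-factor or by the vanishing of $H_*(S^1;\Lambda)$ in the relevant degree. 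Running either version, the conclusion is immediate.

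The main obstacle I anticipate is not the $S^1$-computation (which is the classical "equivariant propagator over $\C(t)$" phenomenon, here simplified because $\Lambda = \C[t^{\pm 1}]$ and $1-t$ need not be inverted — we stay over $\Lambda$), but rather rigorously setting up the compatibility between the Fulton–MacPherson blow-up $\bConf_2(\Sigma^n \times S^1)$ and the product $\bConf_2(\Sigma^n) \times$(something)$\times \bConf_2(S^1)$ near the diagonal strata, so that the Künneth-style factorization of chains and of $\partial_A$ is legitimate — in particular checking that the boundary section $s_{\tau_0}(X)$ and the blow-down of the diagonal normal sphere bundle decompose as claimed, and that no cross terms contribute to the $ST(K)$-component. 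Keeping careful track of the $\Lambda$-module bookkeeping (which power of $t$ each chain carries, coming from the $S^1$-winding) through this factorization is the delicate point.
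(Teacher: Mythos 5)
Your overall target is correct: you want to show that $i_*[s_{\tau_0}(X)\otimes c_A]=0$ in $H_{n+1}(\bConf_2(X);A^{\otimes 2})$, which by Corollary~\ref{cor:prop-exists} and freeness of $R[ST(K)\otimes c_A]$ forces $\calO(X,A)=0$. This is indeed the structure of the paper's argument. However, the route you propose for establishing this has a genuine gap, and the paper uses a different and cleaner device.

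The problem is that both of your plans lean on a K\"unneth-style factorization of $\bConf_2(\Sigma^n\times S^1)$ in terms of $\bConf_2(\Sigma^n)$ and $\bConf_2(S^1)$ (or a ``contracting homotopy on $\Sigma^n$ tensored with an $S^1$-propagator''). No such factorization is available: the Fulton--MacPherson compactification of a product is not a product of compactifications, and the relevant blow-up strata in $\bConf_2(\Sigma^n\times S^1)$ mix the $\Sigma^n$- and $S^1$-directions (a configuration $((x,s),(y,u))$ degenerates when $x=y$ \emph{and} $s=u$ simultaneously, so there is no clean projection to $\bConf_2(S^1)$). You flag this as a ``delicate bookkeeping'' point, but it is really where the argument breaks. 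The paper avoids it entirely: instead of trying to factor $\bConf_2(X)$, it uses the honest smooth map
\[
\bConf_2(\Sigma^n)\times S^1\;\longrightarrow\;\bConf_2(X),\qquad
((x,y),\theta)\mapsto\bigl((x,\theta),(y,\theta)\bigr),
\]
the ``diagonal $S^1$-sweep,'' which is well defined (including on the blow-up locus, since both points carry the same $S^1$-coordinate) and which sends an equivariant propagator $P_{\Sigma^n}$ of $\bConf_2(\Sigma^n)$ to a relative cycle $P_{\Sigma^n}\times S^1$ of $(\bConf_2(X),\partial\bConf_2(X))$. The crucial point making this legal with local coefficients is that the diagonal $\Lambda$-action on $A\otimes_\Lambda A=A_1^{\otimes 2}[t^{\pm 1}]$ is trivial, so the sweep induces a well-defined map $\cdot\times S^1\colon H_i(\bConf_2(\Sigma^n);A_1^{\otimes 2})\to H_{i+1}(\bConf_2(X);A^{\otimes 2})$. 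Commutativity of the restriction-to-boundary square then shows $r([P_{\Sigma^n}\times S^1])=[s_{\tau_0}(X)\otimes c_A]$, hence $[s_{\tau_0}(X)\otimes c_A]\in\operatorname{Im}r=\ker i$, which is the conclusion you want.

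Two further inaccuracies in your write-up: $1-t$ is not a unit in $\Lambda=\C[t^{\pm 1}]$, so ``the defect $(1-t)^{-1}$ is absorbed over $\Lambda$'' is false as stated (this absorption is Lescop's move over $\C(t)$, not over $\C[t^{\pm 1}]$); and $H_*(S^1;\Lambda)\neq 0$ (it is $\C$ in degree $0$, as the paper computes in Example~\ref{ex:235}). Neither claim is needed once you adopt the diagonal-sweep argument, but as written they are part of the justification you offer and should be corrected.
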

\begin{proof}
For each point $(x,y)$ of $\bConf_2(\Sigma^n)$, the chain $(x,y)\times S^1=\{(x,\theta)\times (y,\theta)\mid \theta\in S^1\}$ of $\bConf_2(X)$ is defined. Similarly, for each $k$-chain $\sigma$ of $\bConf_2(\Sigma^n)$, the $(k+1)$-chain $\sigma\times S^1$ of $\bConf_2(X)$ is defined. It follows from the invariance of the coefficient $A\boxtimes_\Lambda A=A_1^{\boxtimes 2}[t^{\pm 1}]$ under the diagonal action of $\Lambda$ that $\sigma\times S^1$ is a cycle over $A_1^{\boxtimes 2}[t^{\pm 1}]$ if $\sigma$ is a cycle and that the correspondence $\sigma\mapsto \sigma\times S^1$ induces a well-defined map
\[ \cdot\times S^1\colon H_i(\bConf_2(\Sigma^n);A_1^{\boxtimes 2})\to H_{i+1}(\bConf_2(X);A_1^{\boxtimes 2}[t^{\pm 1}]). \]
Similarly, we have the following commutative diagram.
\[ \xymatrix{
  H_{n+2}(\bConf_2(X),\partial\bConf_2(X);A_1^{\boxtimes 2}[t^{\pm 1}]) \ar[r]^-{r} &
  H_{n+1}(\partial\bConf_2(X);A_1^{\otimes 2}[t^{\pm 1}])\\
  H_{n+1}(\bConf_2(\Sigma^n),\partial\bConf_2(\Sigma^n);A_1^{\boxtimes 2}) \ar[r]^-{r'} \ar[u]^-{\cdot\times S^1}&
  H_{n}(\partial\bConf_2(\Sigma^n);A_1^{\otimes 2}) \ar[u]_-{\cdot\times S^1}
}\]
One may see by an argument similar to Lemma~\ref{lem:H(Conf)} that 
\[ H_i(\bConf_2(\Sigma^n);A_1^{\boxtimes 2})=\left\{\begin{array}{ll}
R[ST'(*)\otimes c_{A_1}] & (i=n-1)\\
R[ST'(\Sigma^n)\otimes c_{A_1}] & (i=2n-1)\\
0 & (\mbox{otherwise})
\end{array}\right. \]
where $ST'(\sigma)$ is the restriction of the unit tangent bundle of $\Sigma^n$ to $\sigma$, and that $H_n(\partial\bConf_2(\Sigma^n);A_1^{\otimes 2})$ is spanned over $R$ by $[s_{\tau_0}'(\Sigma^n)\otimes c_{A_1}]$. The following holds.
\[
 [(s_{\tau_0}'(\Sigma^n)\times S^1)\otimes c_{A_1}] = [s_{\tau_0}(X)\otimes c_{A_1}] \in H_{n+1}(\partial\bConf_2(X);A_1^{\otimes 2}[t^{\pm 1}])
 \]
Moreover, by an argument similar to Corollary~\ref{cor:prop-exists}, it follows that there exists an $(n+1)$-chain $\omega_{\Sigma^n}$ of $\bConf_2(\Sigma^n)$ with coefficients in $A_1^{\boxtimes 2}$ that satisfies 
\[ r'([\omega_{\Sigma^n}])=[s_{\tau_0}'(\Sigma^n)\otimes c_{A_1}]. \]
Then by the commutativity of the above diagram, we have
\[ r([\omega_{\Sigma^n}\times S^1])=[s_{\tau_0}(X)\otimes c_{A_1}]. \]
Since this belongs to the kernel of the map $i\colon H_{n+1}(\partial\bConf_2(X);A_1^{\otimes 2}[t^{\pm 1}])\to H_{n+1}(\bConf_2(X);A_1^{\boxtimes 2}[t^{\pm 1}])$, it follows that $\calO_{\tau_0}(X,A)=0$.
\end{proof}
\end{Exa}

\begin{Prop}\label{prop:Sigma-delta-trivial}
The local coefficient system $A=\mathfrak{g}_\rho[t^{\pm}]$ on $X=\Sigma(2,3,5)\times S^1$ of Example~\ref{ex:235} satisfies Assumption~\ref{assum:delta-trivial}.
Namely, we have $H_*(\Delta_X;\mathfrak{g}_\rho^{\otimes 2}[t^{\pm}])\cong H_*(X;\C[t^{\pm 1}])$ and it is generated by $\sigma\otimes c_A$ for cycles $\sigma$ of $X$. 
\end{Prop}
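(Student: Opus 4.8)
The plan is to reduce the statement about the diagonal $\Delta_X\subset X\times X$ to a statement purely about $X=\Sigma^3\times S^1$ with the coefficient system $\mathfrak{g}_\rho^{\otimes 2}[t^{\pm 1}]$, on which the diagonal holonomy acts. Concretely, $\Delta_X$ is diffeomorphic to $X$, and under this identification the local system $A^{\otimes 2}=\mathfrak{g}_\rho^{\otimes 2}[t^{\pm 1}]$ restricts to the $\C[\pi]$-module $\mathfrak{g}\otimes_\C\mathfrak{g}$ (tensored with $\C[t^{\pm 1}]$) with $\pi=\pi'\times\Z$ acting \emph{diagonally} by $\mathrm{Ad}(\rho)\otimes\mathrm{Ad}(\rho)$ on the $\mathfrak{g}\otimes\mathfrak{g}$ factor and by $t^{\pm 1}$-shift on the polynomial factor. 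So I must compute $H_*(\Sigma^3\times S^1;(\mathfrak{g}\otimes\mathfrak{g})_{\mathrm{Ad}\otimes\mathrm{Ad}}[t^{\pm 1}])$, exhibit the summand coming from the Casimir $c_A$, and show there is nothing else.

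First I would decompose the $\pi'$-representation $\mathfrak{g}\otimes_\C\mathfrak{g}$ under the diagonal adjoint action. For $G=SU(2)$, $\mathfrak{g}=\mathfrak{sl}_2$ is the $3$-dimensional irreducible, and $\mathfrak{g}\otimes\mathfrak{g}\cong \C\oplus\mathfrak{g}\oplus(\text{5-dim irreducible})$ as $\mathrm{Ad}(SU(2))$-representations (the symmetric part $\Sym^2\mathfrak{g}\cong\C\oplus V_5$ carrying the trivial summand spanned by $c_A$, the antisymmetric part $\wedge^2\mathfrak{g}\cong\mathfrak{g}$). Restricting along $\rho:\pi'\to SU(2)$, the trivial summand $\C\cdot c_A$ is a direct summand of $\mathfrak{g}\otimes\mathfrak{g}$ as a $\C[\pi']$-module; write $\mathfrak{g}\otimes\mathfrak{g}=\C\, c_A\oplus M$ where $M$ carries the $\pi'$-action through $\mathrm{Ad}$ on $\mathfrak{g}\oplus V_5$. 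The point is then to prove $H_*(\Sigma^3;M_\rho)=0$, i.e. that $M_\rho$ is acyclic on $\Sigma^3$. For the $\mathfrak{g}$-summand this is exactly the acyclicity $H_*(\Sigma^3;\mathfrak{g}_\rho)=0$ already recorded in Example~\ref{ex:235}. For the $V_5$-summand, $V_5=\Sym^4 V$ with $V$ the standard representation, and one uses the same input as for $\mathfrak{g}$: for $\Sigma(p,q,r)$ and an irreducible $SU(2)$-representation, $H^1$ vanishes for \emph{every} associated representation coming from an irreducible $\mathrm{SO}(3)$ or $SU(2)$ local system with no invariants, because the representation variety near $\rho$ is a single point (the adjoint action, hence every associated bundle, has $H^1$ controlled by the deformation theory of the flat connection, which is rigid for Brieskorn spheres); combined with $H^0(V_5{}_\rho)=0$ (no $\pi'$-invariants, since $\rho(x_3)$ already has no fixed nonzero vector in $V_5$) and Poincaré duality, one gets $H_*(\Sigma^3;V_5{}_\rho)=0$. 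Hence $H_*(\Sigma^3;M_\rho)=0$, so tensoring with $H_*(S^1;\C[t^{\pm 1}])\cong\C$ (as in Example~\ref{ex:235}) gives $H_*(\Sigma^3\times S^1;M_\rho[t^{\pm1}])=0$.

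It follows that $H_*(\Delta_X;A^{\otimes 2})=H_*(\Sigma^3\times S^1;\C\,c_A[t^{\pm1}])\oplus 0\cong H_*(\Sigma^3\times S^1;\C[t^{\pm1}])$, with the isomorphism realized by $\sigma\mapsto\sigma\otimes c_A$ for cycles $\sigma$, which is precisely the content of Assumption~\ref{assum:delta-trivial}. I expect the main obstacle to be the acyclicity of the $V_5=\Sym^4 V$-summand on $\Sigma(2,3,5)$: while $H_*(\Sigma^3;\mathfrak{g}_\rho)=0$ is a standard fact cited from \cite{FS,Bod}, the vanishing for the $5$-dimensional representation requires either an explicit cohomology computation from the Seifert presentation of $\pi'$ (splitting the torus-knot-like generators $x_1,x_2,x_3$ and using that $\rho(x_i)$ act by rotations whose eigenvalues on $\Sym^4 V$ avoid $1$ except in controlled ways) or an appeal to the rigidity of the flat $\mathrm{SO}(3)$-connection and the fact that $\Sigma(2,3,5)$ is a rational homology sphere with vanishing twisted cohomology in all nontrivial irreducibles of the image group. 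I would carry this out by the direct Fox-calculus / Mayer–Vietoris computation along the Seifert structure, which is elementary but requires bookkeeping of eigenvalues $e^{2\pi i k/2}, e^{2\pi i k/3}, e^{2\pi i k/5}$ on $\Sym^4 V$.
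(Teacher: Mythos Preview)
Your overall strategy coincides with the paper's: identify $\Delta_X$ with $X$, decompose $\mathfrak{g}\otimes\mathfrak{g}$ under the diagonal adjoint $SU(2)$-action as $\C c_A\oplus \mathfrak{g}\oplus V_4$ (your $V_5$; the paper indexes by highest weight), and reduce to proving $H_*(\Sigma(2,3,5);(V_4)_\rho)=0$. The $\mathfrak{g}$-summand is handled by Example~\ref{ex:235} in both.

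There is, however, a genuine gap in your argument for the $V_4$-summand. Rigidity of $\rho$ in the $SU(2)$-representation variety yields exactly $H^1(\Sigma^3;\mathfrak{g}_\rho)=0$, since the Zariski tangent space at $[\rho]$ is $H^1$ with \emph{adjoint} coefficients. It does not say anything about $H^1(\Sigma^3;(V_4)_\rho)$; that would correspond to deformations inside a larger ambient group whose Lie algebra contains $V_4$ as a summand under $\mathrm{Ad}\circ\rho$, and nothing you invoke supplies that. Your claim that ``$H^1$ vanishes for every associated representation \ldots because the representation variety near $\rho$ is a single point'' is therefore not justified. Your fallback of a direct Fox-calculus/Mayer--Vietoris computation along the Seifert fibration would indeed succeed, but is unnecessary here.

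The paper exploits a fact you overlooked: $\pi'=\pi_1\Sigma(2,3,5)$ is \emph{finite} (the binary icosahedral group), so the universal cover of $\Sigma^3$ is $S^3$ and $\C[\pi']$ is semisimple by Maschke's theorem. Semisimple rings are hereditary, so the universal coefficient theorem over $R=\C[\pi']$ applies to $C=S_*(S^3;\C)$ and gives, for any $\C[\pi']$-module $W$,
\[
0\to H_j(S^3;\C)\otimes_{\C[\pi']}W\to H_j(\Sigma^3;W)\to \mathrm{Tor}_1^{\C[\pi']}(H_{j-1}(S^3;\C),W)\to 0.
\]
Since $H_1(S^3)=H_2(S^3)=0$, this immediately yields $H_2(\Sigma^3;(V_4)_\rho)=0$; then $H^0=(V_4)^{\pi'}=0$ follows from a short eigenvalue check for $\rho(x_1)$ and $\rho(x_2)$ acting on $\Sym^4 V$, and Poincar\'e duality finishes. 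This replaces your proposed Seifert computation by a two-line argument and avoids the incorrect rigidity step entirely.
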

\begin{proof}
It is well-known that the adjoint representation of $SU(2)$ on $\mathfrak{g}=\mathfrak{sl}_2$ is irreducible and $\mathfrak{g}^{\otimes 2}$ equipped with the diagonal adjoint $SU(2)$-action $\mathrm{Ad}\otimes \mathrm{Ad}$ has the following decomposition into irreducible $SU(2)$-modules:
\[ \mathfrak{g}\otimes \mathfrak{g}\cong V_0\oplus V_2\oplus V_4, \]
where $V_m$ is the irreducible representation of $\mathfrak{sl}_2$ of highest weight $m$ and hence of dimension $m+1$ (Clebsch-Gordan formula, e.g., \cite[Propotision~V.5.1]{Kas}). More explicitly, $V_0\cong \C$ is the trivial representation spanned by $c_{\mathfrak{g}}$, and $V_2\cong\mathfrak{g}$ as $SU(2)$-modules. This together with (\ref{eq:acyclic}) in Example~\ref{ex:235} shows that
\[ \begin{split}
	H_*(\Delta_X;\mathfrak{g}_\rho^{\otimes 2}[t^{\pm 1}])
	&\cong H_*(\Delta_X;\C[t^{\pm 1}])\oplus H_*(\Delta_X;\mathfrak{g}_\rho[t^{\pm 1}])
	\oplus H_*(\Delta_X;V_4[t^{\pm 1}]_\rho)\\
	&\cong H_*(\Delta_X;\C[t^{\pm 1}])\oplus H_*(\Delta_X;V_4[t^{\pm 1}]_\rho),
\end{split} \]
where $\pi_1(\Delta_X)$ acts trivially on $\C[t^{\pm 1}]$. 
Hence it suffices to prove the vanishing of $H_*(\Sigma(2,3,5);(V_4)_{\rho})$. Here, we put the subscript $\rho$ to emphasize that the representation is given through $\rho$. 

The module $V_m=\Sym^m V$, $V=\C^2$, can be considered as the space of homogeneous (commutative) polynomials of degree $m$ in two variables, on which $g\in SU(2)$ acts by
\[ (g\cdot f)\2vec{x}{y}= f(g^{-1}\2vec{x}{y}). \]
(e.g., \cite[IV.1]{Kna}.) One can see that $H^0(\Sigma(2,3,5);(V_4)_{\rho})\cong V_4^{\pi'}=\{v\in V_4\mid g\cdot v=v\,\,(\forall g\in \pi')\}$ is zero. In other words, $V_4$ is an irreducible $\pi'$-module (Proposition~\ref{prop:V4-irred}).

Since $\pi'=\pi_1\Sigma(2,3,5)$ is finite, $\C[\pi']$ is semisimple in the sense of \cite[\S{I.4}]{CE} by Maschke's theorem and we have $H^1(\pi';(V_4)_\rho)=0$ (Theorem~VI.16.6 and Lemma~VI.16.7 of \cite{HS}). By the universal coefficient theorem, which is valid if the ring is hereditary, the sequence
\[ 0\to \mathrm{Ext}_{\C[\pi']}^1(H_{i-1}(C),W)\to H^i(\mathrm{Hom}_{\C[\pi']}(C,W))\to \mathrm{Hom}_{\C[\pi']}(H_i(C),W)\to 0 \]
is exact for $C=S_*(S^3;\C)$ (as a $\C[\pi']$-module) and any $\C[\pi']$-module $W$ (e.g., \cite[Theorem~VI.3.3]{CE}). Hence we have
\[ \begin{split}
&H^3(\Sigma(2,3,5);W)\cong \mathrm{Hom}_{\C[\pi']}(\C,W)\cong H^0(\pi';W)=W^{\pi'},\\
&H^2(\Sigma(2,3,5);W)=0,\\
&H^1(\Sigma(2,3,5);W)\cong\mathrm{Ext}_{\C[\pi']}^1(\C,W)=H^1(\pi';W),\\
&H^0(\Sigma(2,3,5);W)\cong\mathrm{Hom}_{\C[\pi']}(\C,W)\cong H^0(\pi';W)=W^{\pi'}.
\end{split} \]
Namely, we have $H^*(\Sigma(2,3,5);(V_4)_\rho)=0$. Then by Poincar\'{e} duality (e.g., \cite[\S{3.H}]{Hat2}, \cite[Theorem~7.17]{Hatt} etc.), we also have $H_*(\Sigma(2,3,5);(V_4)_{\rho})=0$. 
\end{proof}
\begin{Rem}\label{rem:Sigma-delta-trivial}
A result similar to Proposition~\ref{prop:Sigma-delta-trivial} holds also for $X=S^{4k-1}/\pi'$ ($k\geq 2$), in which case $H_*(C)=H_*(S^{4k-1};\C)$ and
\[ H^i(X;W)\cong\left\{\begin{array}{ll}
W^{\pi'} & (i=0,4k-1),\\
H^1(\pi';W) & (i=1),\\
0 & (\mbox{otherwise}),
\end{array}\right. \]
which vanishes for all $i$ when $W=V_4$.
\end{Rem}

\mysection{Framed fiber bundles and their fiberwise configuration spaces}{s:bundle}

We make an assumption on $X$-bundles $\pi\colon E\to B$ with local coefficient system, and we compute the homology of the $\bConf_2(X)$-bundle $E\bConf_2(\pi)$ over $B$ associated to $\pi$ with fiber the configuration space of two points. We also recall a geometric interpretation of chains with local coefficients and intersections among them. 

\subsection{Moduli spaces of manifolds with some structures}\label{ss:moduli-sp}

Let $\Sigma^n$ be a stably parallelizable $n$-manifold and let $X=\Sigma^n\times S^1$. We fix a basepoint $x_0\in X$. Let 
\[ K=\{*\}\times S^1\subset X, \]
and we assume that $K$ is disjoint from $x_0$. Let $\tau_0\colon TX\to \R^{n+1}\times X$ be the standard framing, i.e., the one obtained from a stable framing on $\Sigma^n\times\{*\}$ by $S^1$-symmetry. In the following, we assume that $\pi\colon E\to B$ is an $X$-bundle with structure group $\Diff_0(X^\bullet,\partial)$, equipped with the following data.
\begin{enumerate}
\item A trivialization of the restriction of $\pi$ on a tubular neighborhood of the basepoint section $\widetilde{x}_0$. 
\item A smooth trivialization of the vertical tangent bundle $T^vE=\mathrm{Ker}\,d\pi$ over $E$
\[ \tau\colon T^vE\to \R^{n+1}\times E\]
that agrees with $\tau_0$ on the base fiber, and that agrees with the trivialization of the normal bundle of $\widetilde{x}_0$ induced from the item 1.
\item A fiberwise pointed homotopy equivalence $f\colon E\to X$.
\end{enumerate}
The classifying space for $X$-bundles $\pi$ with such structures is given by $\widetilde{B\Diff}_{\mathrm{deg}}(X^\bullet,\partial)$, defined in \S\ref{ss:main-result}. We have the following fibration sequences.
\begin{equation}\label{eq:fibrations}
 \begin{split}
\calF_*(X)\to &\widetilde{B\Diff}(X^\bullet,\partial)\to B\Diff(X^\bullet,\partial)\\
\calF_*(X)\times \mathrm{Map}_*^{\mathrm{deg}}(X,X)\to &\widetilde{B\Diff}_{\mathrm{deg}}(X^\bullet,\partial)\to B\Diff_0(X^\bullet,\partial)
\end{split} 
\end{equation}

\begin{Prop}\label{prop:pi-F-M}
Suppose that $\Sigma^n$ is a homology $n$-sphere and $X=\Sigma^n\times S^1$. Then for $p\geq 1$, $\pi_p\calF_*(X)$ and $\pi_p\mathrm{Map}_*^{\mathrm{deg}}(X,X)$ are finitely generated abelian groups.
\end{Prop}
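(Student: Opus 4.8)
The plan is to reduce the finite generation of both homotopy groups to standard facts about homotopy groups of mapping spaces between finite CW complexes, using rational homotopy theory or Sullivan--Federer--type finiteness results. The key observation is that $X = \Sigma^n \times S^1$ is a closed manifold (hence a finite CW complex) with the homotopy type required, and that $\calF_*(X)$ and $\Map_*^{\mathrm{deg}}(X,X)$ are, up to the relevant issues, components of mapping spaces. First I would handle $\pi_p\Map_*^{\mathrm{deg}}(X,X)$: the space $\Map_*(X,X)$ of based maps between the finite complex $X$ and the simply-connected-after-one-stage target is not simply connected, but its universal cover and higher homotopy groups are governed by $\Map_*(X, \widetilde{X})$-type considerations; more directly, by a theorem of Federer (the Federer spectral sequence) or of Thom, for a finite CW complex $X$ and a space $Y$ with finitely generated homotopy groups, $\pi_p(\Map_*(X,Y))$ is finitely generated for $p \geq 1$ provided $\pi_p(\Map_*(X,Y))$ is built from the finitely many groups $H^i(X;\pi_{i+p}(Y))$. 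Here $Y = X = \Sigma^n\times S^1$, whose homotopy groups are those of $\Sigma^n$ shifted/tensored appropriately; since $\Sigma^n$ is a homology sphere (so $\pi_1$ is a finite perfect group when $n=3$, or it is $S^{n-1}\times S^1$-like), one checks $\pi_i(X)$ is finitely generated in each degree. The degree-$1$ and based-near-$x_0$ and homotopic-to-identity constraints only select a union of path components and impose conditions that do not affect $\pi_p$ for $p\geq 1$.

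The second step is $\pi_p\calF_*(X)$. A framing is a section of the bundle $\mathrm{Fr}(TX) \to X$ of frames, i.e.\ a $GL_{n+1}(\R)$-principal bundle (or, after choosing a metric, an $O(n+1)$-bundle); since $X$ is parallelizable this bundle is trivial, so $\calF_*(X)$, the space of framings standard near $x_0$, is homotopy equivalent to $\Map_*(X, GL_{n+1}(\R)) \simeq \Map_*(X, O(n+1))$. Then $\pi_p\calF_*(X) = \pi_p\Map_*(X, O(n+1))$, and again by Federer's theorem (or directly by the Postnikov tower / obstruction-theory argument, since $O(n+1)$ has finitely generated homotopy groups in each degree and $X$ is a finite complex) this is finitely generated for all $p \geq 1$. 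One should be slightly careful that "standard near $x_0$" means we are looking at sections over $X$ relative to a disk neighborhood of $x_0$, equivalently maps $(X, D^{n+1}_{x_0}) \to (O(n+1), e)$, i.e.\ based maps out of $X/D^{n+1}_{x_0} \simeq X$ with a minor adjustment — still a finite complex — so the conclusion is unchanged.

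The cleanest way to package both is: for a finite CW pair $(X,A)$ and a space $Y$ with $\pi_i(Y)$ finitely generated for all $i$, the relative mapping space $\Map((X,A),(Y,y_0))$ has finitely generated homotopy groups in every positive degree. This follows from the tower of fibrations obtained from a CW filtration of $X$ rel $A$, whose fibers are products of loop spaces $\Omega^{k}\Map(\vee S^j, Y)$-type pieces, i.e.\ products of $\Omega^{j+k}Y$ over the finitely many cells; each such has finitely generated homotopy groups, and finite generation is preserved under the long exact sequences of finitely many fibrations. Apply this with $Y = O(n+1)$ for $\calF_*(X)$ and with $Y = X$ (using that $X = \Sigma^n\times S^1$ has finitely generated homotopy in each degree because $\Sigma^n$ does — $\pi_1$ finite for a homology $3$-sphere, and in the $S^{n-1}\times S^1$ case the universal cover is $S^{n-1}\times\R$) for $\Map_*^{\mathrm{deg}}(X,X)$.

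The main obstacle is verifying that $\Sigma^n$ — and hence $X$ — really has finitely generated homotopy groups in every degree, since $X$ is not simply connected. For $\Sigma^n$ a homology $n$-sphere with $n\geq 3$, if $\pi_1(\Sigma^n)$ is finite (as for $\Sigma(2,3,5)$) then the universal cover is a simply-connected homology sphere, which has finitely generated homotopy groups by Serre's finiteness theorem, and $\pi_i(\Sigma^n) = \pi_i(\widetilde{\Sigma^n})$ for $i\geq 2$ while $\pi_1$ is finite; for the $(\text{homology }S^{n-1})\times S^1$ case the universal cover is $(\text{homology }S^{n-1})^\sim \times \R$, again with finitely generated homotopy. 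Beyond this point the argument is routine obstruction theory; I would cite Federer's theorem on homotopy groups of function spaces (or Hu, \emph{Homotopy Theory}, or Milnor's paper on spaces having the homotopy type of a CW complex) for the function-space finiteness and keep the proof short.
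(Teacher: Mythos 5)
Your argument is correct in outline but takes a genuinely different route from the paper's. The paper does \emph{not} invoke a general finiteness theorem for mapping spaces; it computes $\pi_p\calF_*(X)$ and $\pi_p\Map_*^{\mathrm{deg}}(X,X)$ explicitly via a suspension trick. Starting from the same identification $\calF_*(X)\simeq\Map_*(X,SO_{n+1})$, the paper uses $\Omega^p\Map_*(X,Y)\cong\Map_*(S^p\wedge X,Y)$ and then observes that a pointed degree-$1$ map $c:X\to S^n\times S^1$ (which exists because $\Sigma^n$ is a homology sphere) has $p$-fold suspension $S^pc:S^p\wedge X\to S^p\wedge(S^n\times S^1)$ that is a homology isomorphism between simply connected complexes for $p\geq 1$, hence a homotopy equivalence by Whitehead. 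Since $S^p\wedge(S^n\times S^1)\simeq S^{p+1}\vee S^{p+n}\vee S^{p+n+1}$, the mapping space at once becomes a product of iterated loop spaces of $Y$ and one reads off $\pi_p\calF_*(X)\cong\pi_{p+1}SO_{n+1}\oplus\pi_{p+n}SO_{n+1}\oplus\pi_{p+n+1}SO_{n+1}$ (and similarly with $Y=X$). This buys two things your argument has to work harder for: the precise isomorphism type, and abelianness of $\pi_1$ for free (since the source is a wedge of spheres of dimension $\geq 2$, i.e.\ a double suspension).

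The one place your argument is genuinely thinner is the abelianness claim, which is part of the statement. Your cellular/Federer filtration expresses $\pi_1\Map_*(X,Y)$ as an iterated extension whose subquotients are subgroups and quotients of $\pi_{1+k_i}(Y)$, all abelian since every cell has $k_i\geq 1$; but an iterated extension of abelian groups need not be abelian, so you would still need an argument (e.g.\ the paper's wedge-of-spheres identification, or a separate co-H-space argument) to conclude that $\pi_1$ is abelian. For $p\geq 2$ your argument is complete. The rest of your proposal is sound: parallelizability of $X$ gives $\calF_*(X)\simeq\Map_*(X,SO_{n+1})$ (your $O(n+1)$ with identity-component basepoint is the same thing); the degree-$1$/based-near-$x_0$ constraints only select components and do not affect $\pi_p$ for $p\geq 1$; and your reduction of the $Y=X$ case to finite generation of $\pi_*(\Sigma^n)$ via the universal cover and Serre finiteness is exactly what is needed and is the point the paper leaves implicit.
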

\begin{proof}
This follows immediately from the following two Lemmas~\ref{lem:pi-F} and \ref{lem:pi-M}.
\end{proof}

\begin{Lem}\label{lem:pi-F}
Let $p\geq 1$.
If $\Sigma^n$ is a homology $n$-sphere and $X=\Sigma^n\times S^1$, 
\[ \pi_p \calF_*(X) \cong \pi_{p+1} SO_{n+1} \oplus \pi_{p+n} SO_{n+1} \oplus \pi_{p+n+1} SO_{n+1}. \]
\end{Lem}
\begin{proof}
Fixing the standard framing $\tau_0$ gives a homotopy equivalence
\[ \calF_*(X)=\mathrm{Map}_*(X,GL_{n+1}^+(\R))\simeq\mathrm{Map}_*(X,SO_{n+1}), \]
where $GL_{n+1}^+(\R)$ is the space of orientation preserving linear isomorphisms of $\R^{n+1}$.
When $X=\Sigma^n\times S^1$, where $\Sigma^n$ is a homology $n$-sphere, we take a pointed degree 1 map $c\colon X\to S^n\times S^1$. 
One can see that $\Omega^p\mathrm{Map}_*(X,SO_{n+1})\cong \mathrm{Map}_*(S^p\wedge X,SO_{n+1})$, and the iterated suspension $S^p c\colon S^p\wedge X\to S^p\wedge (S^n\times S^1)$ is a homology equivalence between 1-connected spaces. By Whitehead's theorem, $S^p c$ is a homotopy equivalence. Hence $S^p c$ induces an isomorphism
\[ \pi_0 \mathrm{Map}_*(S^p\wedge X,SO_{n+1}) \stackrel{\cong}{\to} \pi_0 \mathrm{Map}_*(S^p\wedge (S^n\times S^1),SO_{n+1}). \]
The result follows by $S^p\wedge (S^n\times S^1)\simeq S^p(S^n)\vee S^p(S^1)\vee S^p(S^n\wedge S^1)\simeq S^{p+n}\vee S^{p+1}\vee S^{p+n+1}$ (e.g., \cite[Proposition~4I.1]{Hat2}). 
\end{proof}

Proof of the next lemma is the same as that of Lemma~\ref{lem:pi-F}. Note that $\Map_*^\mathrm{deg}(X,X)$ is homotopy equivalent to the identity component of $\Map_*(X,X)$.
\begin{Lem}\label{lem:pi-M}
Let $p\geq 1$.
If $\Sigma^n$ is a homology $n$-sphere and $X=\Sigma^n\times S^1$,
\[ \pi_p \Map_*^\mathrm{deg}(X,X) \cong \pi_{p+1} X\oplus \pi_{p+n} X\oplus \pi_{p+n+1} X. \]
\end{Lem}

For example, when $n=3$, $p=1$, and $\Sigma^3=\Sigma(2,3,5)$, we have
\begin{equation}\label{eq:pi-F}
 \begin{split}
&\pi_1\calF_*(X)\cong \pi_2SO_4\oplus \pi_4SO_4\oplus \pi_5SO_4\cong \Z_2\oplus\Z_2\oplus\Z_2\oplus\Z_2,\\
&\pi_1\Map_*^{\mathrm{deg}}(X,X)\cong \pi_2X\oplus\pi_4X\oplus \pi_5X\cong \Z_2\oplus \Z_2.
\end{split} 
\end{equation}
More generally, when $n=4k-1$, $p=n-2=4k-3$ ($k\geq 2$), and $\Sigma^n=S^n/\pi_1\Sigma(2,3,5)$, we have
\begin{equation}\label{eq:pi-F-4k-1}
 \begin{split}
&\pi_{4k-3}\calF_*(X)\cong \pi_{4k-2}SO_{4k}\oplus \pi_{8k-4}SO_{4k}\oplus \pi_{8k-3}SO_{4k},\\
&\pi_{4k-3}\Map_*^{\mathrm{deg}}(X,X)\cong \pi_{4k-2}S^{4k-1}\oplus\pi_{8k-4}S^{4k-1}\oplus \pi_{8k-3}S^{4k-1}.
\end{split} 
\end{equation}
It is known that $\pi_i S^{4k-1}$ is finite if $i\neq 4k-1$ (e.g. \cite[5.1 (Serre Classes)]{Hat3}), and $\pi_iSO_{4k}$ is finite if $i>4k-5$ (e.g. \cite[3.D]{Hat2}).

\begin{Cor}\label{cor:abel-infinite}
\begin{enumerate}
\item If the abelianization of the group $\pi_1\widetilde{B\Diff}_{\mathrm{deg}}(X^\bullet,\partial)$ has countable infinite rank, then so is the abelianization of $\pi_1B\Diff_0(X^\bullet,\partial)$.
\item For $p\geq 2$, if the abelian group $\pi_p\widetilde{B\Diff}_{\mathrm{deg}}(X^\bullet,\partial)$ has countable infinite rank, then so is the abelian group $\pi_pB\Diff_0(X^\bullet,\partial)$.
\end{enumerate}
\end{Cor}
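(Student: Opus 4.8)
The plan is to exploit the fibration sequences
\[ \calF_*(X)\times \Map_*^\mathrm{deg}(X,X)\to \widetilde{B\Diff}_{\mathrm{deg}}(X^\bullet,\partial)\to B\Diff_0(X^\bullet,\partial) \]
together with the finite generation of the fiber's homotopy groups established in Proposition~\ref{prop:pi-F-M}. First I would write down the relevant portion of the long exact sequence of homotopy groups of this fibration. Abbreviate the fiber by $F=\calF_*(X)\times \Map_*^\mathrm{deg}(X,X)$, the total space by $\widetilde{B}$, and the base by $B_0=B\Diff_0(X^\bullet,\partial)$. For $p\geq 2$ the relevant segment is
\[ \pi_p F\to \pi_p\widetilde{B}\to \pi_p B_0\to \pi_{p-1}F, \]
and for $p=1$ we use instead the exact sequence of groups $\pi_1 F\to \pi_1\widetilde{B}\to \pi_1 B_0\to \pi_0 F$, noting that $F$ is connected since $\calF_*(X)$ and $\Map_*^\mathrm{deg}(X,X)$ are (a framing and a degree-$1$ map homotopic to the identity exist, and the spaces of such are taken path-component-wise by definition); actually connectedness of $F$ is not even needed, only that $\pi_1 F$ is finitely generated, which follows from Proposition~\ref{prop:pi-F-M}.

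For part~2: since $\pi_p F$ is finitely generated abelian, its image in $\pi_p\widetilde{B}$ is a finitely generated subgroup, so $\pi_p\widetilde{B}$ modulo a finitely generated subgroup injects into $\pi_p B_0$. If $\pi_p\widetilde{B}$ has rank $\geq \aleph_0$ (at least countably infinite rank), then the quotient $\pi_p\widetilde{B}/\mathrm{im}(\pi_p F)$ still has rank $\geq \aleph_0$, because quotienting by a finite-rank subgroup drops the rank by at most a finite amount; since this quotient injects into $\pi_p B_0$, we conclude $\pi_p B_0$ has rank $\geq\aleph_0$. For part~1: the abelianization functor is right exact on groups, so from $\pi_1 F\to \pi_1\widetilde{B}\to \pi_1 B_0\to 1$ (the surjection onto the relevant quotient — more precisely, the exact sequence $\pi_1 F\to \pi_1\widetilde{B}\to \pi_1 B_0\to \pi_0 F$ with $\pi_0 F$ trivial, or in any case finite) we get that $H_1(\widetilde{B})=\pi_1\widetilde{B}^{\mathrm{ab}}$ surjects, modulo the image of the finitely generated group $\pi_1 F^{\mathrm{ab}}$, onto a subgroup of finite index in $\pi_1 B_0^{\mathrm{ab}}$; the same rank-counting argument as before then transfers the infinite rank from $\pi_1\widetilde{B}^{\mathrm{ab}}$ to $\pi_1 B_0^{\mathrm{ab}}$.

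The only genuine subtlety — and the step I would be most careful about — is the passage to abelianizations in part~1, since abelianization is not exact on the left; one must argue directly that if $A\to G\to Q\to 1$ is exact with $A$ finitely generated, then $G^{\mathrm{ab}}$ surjects onto $Q^{\mathrm{ab}}$ with kernel the image of $A^{\mathrm{ab}}$, hence a quotient of a finitely generated group, so $\mathrm{rank}\,Q^{\mathrm{ab}}\geq \mathrm{rank}\,G^{\mathrm{ab}}-\mathrm{rank}\,A^{\mathrm{ab}}$. Combined with Proposition~\ref{prop:pi-F-M} this gives the result. Everything else is a formal diagram chase, so I would keep the write-up short, citing Proposition~\ref{prop:pi-F-M} for the finite generation and invoking the homotopy long exact sequence of the displayed fibration.
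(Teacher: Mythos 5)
Your proof is correct and takes essentially the same approach as the paper: both use the long exact sequence of the fibration $\calF_*(X)\times\Map_*^{\mathrm{deg}}(X,X)\to\widetilde{B\Diff}_{\mathrm{deg}}(X^\bullet,\partial)\to B\Diff_0(X^\bullet,\partial)$ together with Proposition~\ref{prop:pi-F-M}, and for part~1 both note that the kernel of $G^{\mathrm{ab}}\to Q^{\mathrm{ab}}$ is a quotient of the finitely generated group coming from the fiber (the paper phrases this via the five-term exact sequence $H_1(H;\Z)_G\to H_1(G;\Z)\to H_1(G/H;\Z)\to 0$, while you argue the same fact directly, which is a minor stylistic difference).
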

\begin{proof}
For 1, we consider the exact sequence 
\[\pi_1(\calF_*(X)\times \mathrm{Map}_*^{\mathrm{deg}}(X,X))
\to \pi_1\widetilde{B\Diff}_{\mathrm{deg}}(X^\bullet,\partial)
\to \pi_1B\Diff_0(X^\bullet,\partial)\to 0 \]
for the fibration (\ref{eq:fibrations}).
If we put $G=\pi_1\widetilde{B\Diff}_{\mathrm{deg}}(X^\bullet,\partial)$, and let $H\subset G$ be the image from $\pi_1(\calF_*(X)\times \mathrm{Map}_*^{\mathrm{deg}}(X,X))$, then by exactness, $H$ is a normal subgroup of $G$, $G/H$ is isomorphic to $\pi_1B\Diff_0(X^\bullet,\partial)$, and we have the homology exact sequence
\[ H_1(H;\Z)_G\to H_1(G;\Z)\to H_1(G/H;\Z)\to 0, \]
which is a part of the five term exact sequence for group homology (e.g., \cite[Corollary~VI.8.2]{HS}). Since $H$ is the image from a finitely generated abelian group (of finite rank), and $H_1(G;\Z)$ has countable infinite rank by assumption, the result follows. 

The assertion 2 follows immediately from the long exact sequence for homotopy groups of fibration, and that the homotopy groups of the fiber is finitely generated abelian groups by Proposition~\ref{prop:pi-F-M}.
\end{proof}

\begin{Prop}\label{prop:puncture}
\begin{enumerate}
\item If the abelianization of the group $\pi_1B\Diff_0(X^\bullet,\partial)$ has countable infinite rank, then so is the abelianization of $\pi_1B\Diff_0(X)$.
\item For $p\geq 2$, if the abelian group $\pi_pB\Diff_0(X^\bullet,\partial)$ has countable infinite rank, then so is the abelian group $\pi_pB\Diff_0(X)$.
\end{enumerate}
\end{Prop}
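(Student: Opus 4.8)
The plan is to compare the bundle $X^\bullet$-bundles with $X$-bundles by filling in the removed ball. Recall $X^\bullet$ is the complement of a small open ball around $x_0$, so $X = X^\bullet \cup_{S^n} D^{n+1}$. A diffeomorphism of $X^\bullet$ fixing a neighborhood of $\partial X^\bullet = S^n$ extends by the identity on $D^{n+1}$ to a diffeomorphism of $X$ fixing a neighborhood of $x_0$, and this extension is compatible with the relative-homotopy-to-identity condition. This gives a map $\Diff_0(X^\bullet,\partial) \to \Diff_0(X, x_0)$, hence $B\Diff_0(X^\bullet,\partial) \to B\Diff_0(X,x_0)$, and composing with the natural map $B\Diff_0(X,x_0)\to B\Diff_0(X)$ that forgets the base point gives $\Phi: B\Diff_0(X^\bullet,\partial)\to B\Diff_0(X)$. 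I would analyze $\Phi$ through the two fibration sequences it fits into.

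First I would handle the passage from $X^\bullet$-bundles to pointed $X$-bundles. The homotopy fiber of $B\Diff_0(X^\bullet,\partial)\to B\Diff_0(X,x_0)$ is the classifying space of the group of diffeomorphisms of $D^{n+1}$ fixing the boundary together with the data of framing/parametrization of the collar — essentially $\Diff(D^{n+1},\partial)$ up to a space with finitely generated homotopy groups (involving $\pi_*SO_{n+1}$ and, via the boundary-jet data, $\pi_*$ of a sphere). The key input is that $\Diff(D^{n+1},\partial)$ and all the auxiliary correction terms have finitely generated homotopy groups in the relevant degrees; by Cerf--Morlet this fiber has finitely generated homotopy groups for $n+1\ge 5$, and for $n+1 = 4$ one still knows $\pi_0$ and $\pi_1$ are finitely generated (indeed the relevant low-degree groups are known or at least finitely generated). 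Then the long exact sequence of homotopy groups, exactly as in the proof of Corollary~\ref{cor:abel-infinite}, shows that countable infinite rank of $\pi_p B\Diff_0(X^\bullet,\partial)$ (for $p\ge 2$) or of the abelianization of $\pi_1$ passes to $B\Diff_0(X,x_0)$: an abelian group of countable infinite rank modulo the image of a finitely generated group still has countable infinite rank, and for the $\pi_1$ case the same five-term group homology exact sequence used in Corollary~\ref{cor:abel-infinite} applies.

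Second I would handle forgetting the base point, via the evaluation fibration $X \to B\Diff_0(X,x_0) \to B\Diff_0(X)$ (the fiber $X$ arising as the orbit of $x_0$). Since $X=\Sigma^n\times S^1$ with $\Sigma^n$ a homology sphere or a (homology sphere)$\times S^1$, the homotopy groups $\pi_p X$ are finitely generated for every $p$ (indeed $\pi_1 X$ is $\pi_1\Sigma^n\times\Z$, finitely generated, and the universal cover has finitely generated homology so all higher $\pi_p$ are finitely generated). Again the long exact sequence of the fibration, together with the five-term exact sequence in the $p=1$ case, shows that countable infinite rank of the relevant group for $B\Diff_0(X,x_0)$ descends to $B\Diff_0(X)$. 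Composing the two steps proves both assertions.

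The main obstacle is the low-dimensional case: for $\dim X = 4$ (i.e.\ $n=3$) one cannot invoke Cerf--Morlet to conclude $\Diff(D^4,\partial)$ has finitely generated homotopy groups, and this is precisely the subtle point. However, for the statements here one only needs $\pi_0$ and $\pi_1$ of the correction fiber, and $\pi_0\Diff(D^4,\partial)$ together with the relevant $\pi_*SO_4$ and sphere-homotopy contributions are finitely generated (in fact $\pi_0\Diff(D^4,\partial)$ is at worst a finitely generated group in the range needed; what matters is only that it is \emph{not} of countable infinite rank, which holds since $\pi_0 SO_4$ and $\pi_1 SO_4$ are finite and $\pi_0\Diff(D^4,\partial)$, whatever it is, is known to be finitely generated from the work on the smooth mapping class group). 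So the argument still goes through, but this is the step requiring care and an honest citation rather than a routine appeal to surgery theory.
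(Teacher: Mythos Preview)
Your two-step decomposition is a valid strategy and is in fact just the paper's single fibration
\[
\Diff_0(X^\bullet,\partial)\to \Diff_0(X)\to \Emb^{\mathrm{fr}}(\{x_0\},X)\simeq SO_{n+1}\times X
\]
factored through $\Diff_0(X,x_0)$. However, your identification of the homotopy fiber in step~1 is wrong, and the patch you propose for it is unjustified.

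The inclusion $\Diff_0(X^\bullet,\partial)\hookrightarrow\Diff_0(X,x_0)$ (extend by the identity on the ball) sits in the fibration
\[
\Diff_0(X^\bullet,\partial)\longrightarrow \Diff_0(X,x_0)\xrightarrow{\ D_{x_0}\ } GL_{n+1}^+(\R)\simeq SO_{n+1},
\]
since a diffeomorphism fixing $x_0$ with identity derivative there is, by the standard shrinking argument, isotopic to one fixing a neighbourhood of $x_0$. Hence the homotopy fiber of $B\Diff_0(X^\bullet,\partial)\to B\Diff_0(X,x_0)$ is (the image in) $SO_{n+1}$, \emph{not} anything involving $\Diff(D^{n+1},\partial)$. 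With this correction your argument goes through immediately, because $SO_{n+1}$ has finitely generated homotopy groups in all degrees, and no special pleading in dimension~4 is needed.

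The gap in your version is therefore twofold. First, $\Diff(D^{n+1},\partial)$ simply does not appear as the fiber here; there is no ``re-gluing'' ambiguity once the ball is filled in by the identity. Second, your fallback claim that $\pi_0\Diff(D^4,\partial)$ is ``known to be finitely generated from the work on the smooth mapping class group'' is not a theorem: nothing of the sort is known about $\pi_0\Diff(D^4,\partial)$, and invoking it would leave the $n=3$ case of part~1 unproved. The paper avoids all of this by using the single framed-point fibration above, whose base $SO_{n+1}\times X$ manifestly has finitely generated homotopy groups; then the long exact sequence (and, for $p=1$, the five-term exact sequence as in Corollary~\ref{cor:abel-infinite}) gives the result directly.
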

\begin{proof}
In the fibration sequence,
\[ \Diff_0(X^\bullet,\partial)\to \Diff_0(X)\to \Emb^\fr(\{x_0\},X), \]
we have $\Emb^\fr(\{x_0\},X)\simeq SO_{n+1}\times X$. Hence for $p\geq 2$, the cokernel of the homomorphism $\pi_p(SO_{n+1}\times X)\to \pi_pB\Diff_0(X^\bullet,\partial)$ has countable infinite rank. For $p=1$, the result follows from the exact sequence for group homology as in the proof of Corollary~\ref{cor:abel-infinite}. 
\end{proof}

\begin{Rem}
The results in this subsection hold even if $X$ is replaced with a parallelizable $\Z$-homology $\Sigma^n\times S^1$. In that case, one should choose $K$ and $\tau_0$ to define $\widetilde{B\Diff}_{\mathrm{deg}}(X^\bullet,\partial)$, which may not be canonical. 
\end{Rem}

\subsection{Local coefficient system on $E$}\label{ss:localc}

We shall recall a geometric interpretation of singular chains of the total space $E$ with local coefficients. Recall that a local coefficient system (with a fixed module $A$) on a space $Z$ is given by a groupoid homomorphism $\Pi(Z)\to \mathrm{Aut}(A)$, where $\Pi(Z)$ is the fundamental groupoid of $Z$ (e.g., \cite[Ch.VI]{Wh}, \cite[II-Ch.6]{Hatt}). We consider concrete local coefficient systems on $X$ and $E$ induced by a stratification of $X$ from a Morse function to make some genericity assumption (``general position with respect to holonomy'', \S\ref{s:invariant}) to simplify the main computation of the intersection invariant (Proof of Theorem~\ref{thm:surgery}). 

Namely, let $f\colon X\to \R$ be a Morse function whose number of critical points of index 0 is one, and let $\xi$ be its gradient-like vector field that is Morse--Smale, i.e., the descending and ascending manifolds of $\xi$ intersect transversally (e.g., \cite[\S{2.2}]{AD}). In the handle decomposition of $X$ with respect to $\xi$, the 2-skeleton gives a presentation of $\pi_1(X)$. If we let $p$ denote the critical point of $f$ of index 0, the compactification $\bcalA_p(\xi)$ of its ascending manifold has a structure of manifold with corners, and its codimension 1 strata consists of the ascending manifolds of critical points of index 1 (e.g., \cite[\S{4.9}]{AD}, \cite[Theorem~1]{BH}). This is not a submanifold of $X$, but there is a canonical smooth map $\bcalA_p(\xi)\to X$ whose restriction to the interior is an embedding to an open dense subset of $X$. We fix an orientation of $\bcalA_r(\xi)$ for each critical point $r$ of $f$.

The stratification on $\bcalA_p(\xi)$ allows to define a holonomy on a path in $X$. A generic smooth path in $X$ may intersect the codimension 1 strata of $\bcalA_p(\xi)$ transversally finitely many times. The sequence of intersections with codimension 1 strata aligned on the path gives a word in the generator of the presentation of $\pi_1(X)$. Namely, if the descending manifold of each critical point $q$ of index 1 represents a generator $x_q$ of $\pi_1(X)$ and if
a path intersects the ascending manifolds of critical points $q_1,q_2,\ldots,q_r$ of index 1 in this order, then the word $x_{q_r}^{\pm 1}\cdots x_{q_2}^{\pm 1}x_{q_1}^{\pm 1}$ (the signs are determined by the orientations of the intersection) gives an element of $\pi_1(X)$. When an endpoint of a path is on a codimension 1 stratum, we push the endpoint slightly in the positive direction of the coorientation of the stratum to determine a word. Here, we fix the coorientations of the ascending manifolds of codimension 1 in a way that the intersection with the corresponding generator of $\pi_1(X)$ is counted positively. When an endpoint of a path is on a stratum of codimension $\geq 2$, we consider similarly, namely, we slightly push it into a positive direction against all the adjacent codimension 1 strata. 
Then a homomorphism
\[ \mathrm{Hol}_X\colon \Pi(X) \to \pi_1(X) \]
is obtained. Furthermore, by postcomposing with the representation $\rho_A\colon \pi_1(X)\to \mathrm{End}_\C(A)$, we obtain a local coefficient system over $X$, in the sense of \cite[VI.1]{Wh}. We say that this local coefficient system is \emph{trivial} if $\rho_A$ is the map to the identity.

We assume that the $X$-bundle $\pi\colon E\to B$ is equipped with a fiberwise pointed degree 1 map $q\colon E\to X$. In this case, by pulling back the local coefficient system over $X$ by $q$, we obtain a local coefficient system on $E$. More precisely, taking the holonomy of a smooth path $\gamma$ in $E$ by that of a path $q\circ \gamma$ in $X$ gives a homomorphism
\[ \mathrm{Hol}_E\colon \Pi(E)\to \pi_1(X), \]
and by postcomposing with the representation $\rho_A\colon \pi_1(X)\to \mathrm{End}_\C(A)$, we obtain a local coefficient system over $E$. 

\subsection{Chains of $E$ with local coefficients}

Let $\widetilde{E}$ be the $\pi$-covering over $E$ defined by pullback of the universal cover $\widetilde{X}\to X$ by the pointed degree 1 map $q\colon E\to X$. 
We have a non-canonical identification
\[ S_p(\widetilde{E})\otimes_{\C \pi}A = S_p(E)\otimes_\C A \]
given as follows. 

We take a basepoint $\delta_0$ at the barycenter of the standard $p$-simplex $\Delta^p$. A smooth simplex $\widetilde{\sigma}\colon \Delta^p\to \widetilde{E}$ corresponds bijectively to the pair of a smooth simplex $\sigma\colon \Delta^p\to E$ and 
the equivalence class of a path of $E$ from $\sigma(\delta_0)$ to the basepoint $x_0$ in a fixed fiber $X$, where we consider two such paths are equivalent if the endpoints agree and if their image under the homomorphism $q\colon \Pi(E)\to \Pi(X)$ agree. Moreover, through the holonomy homomorphism $\Hol_X\colon \Pi(E)\to \pi_1(X)$, the equivalence class of a base path in $E$ with fixed endpoints corresponds bijectively to an element of $\pi$. 

\begin{Claim}
This gives a bijective correspondence between $\widetilde{\sigma}\colon \Delta^p\to\widetilde{E}$ and a pair $(\sigma,g)$ of a smooth simplex $\sigma\colon \Delta^p\to X$ and an element $g$ of $\pi_1(X)$. 
\end{Claim}

If we denote this pair by $\sigma\cdot g$, a $\C$-chain of $\widetilde{E}$ is formally written as a finite sum
\[ \sum_{g\in \pi} \sigma_g\cdot g, \]
where $\sigma_g$ is a $\C$-chain of $E$. We obtain an identification $S_p(\widetilde{E})\otimes_{\C\pi}A=S_p(E)\otimes_{\C}A$
by the transformation
\[ \sum_g \sigma_g\cdot g\otimes a_g = \sum_g \sigma_g\otimes g\cdot a_g, \]
where we identify smooth simplices in $S_p(E)$ with that whose basepoints are mapped to the fundamental domain of $X$. 
Note that this correspondence depends on the stratification of $X$ and may not be canonical.

The boundary operator $\partial_A=\partial\otimes\mathbbm{1}$ of $S_p(\widetilde{E})\otimes_{\C\pi}A$ induces a twisted boundary operator on $S_p(E)\otimes_{\C}A$, which can be described as follows. For a smooth simplex of $E$ with a base path $(\sigma,\gamma)$, an induced base path for a face $\sigma_j$ of $\sigma$ can be defined by connecting $\gamma$ and the segment $\eta_j$ between the barycenters of $\sigma$ and $\sigma_j$ (see Figure~\ref{fig:path-simplex}). The boundary of $(\sigma,\gamma)$ is the sum of such faces $(\sigma_j,\gamma\circ \eta_j)$:
\[ \partial (\sigma,\gamma)=\sum_j(\sigma_j,\gamma\circ \eta_j). \]
If a segment $\eta_j$ between the barycenters meets a codimension 1 stratum of $\bcalA_p(\xi)$, the coefficient of $\sigma_j$ in $\partial (\sigma,\gamma)$ differs from that of $\sigma$ by the action of $\eta_j$. Thus the twisted boundary of $S_p(E)\otimes_{\C}A$ is given by the formula
\[ \partial_A (\sigma\otimes a)=\sum_j \sigma_j\otimes \mathrm{Hol}_E(\eta_j)\cdot a. \]
\begin{Rem}
Note that the basepoint need not be taken at the barycenter, as long as it is fixed on the simplex. Usually, the basepoint is taken at the 0-th vertex of a simplex (\cite[Ch.VI]{Wh}). We choose the barycenter since it makes the incidence coefficients symmetric.
\end{Rem}
\begin{figure}
\begin{center}
\includegraphics{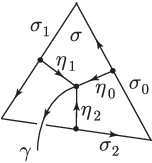}
\end{center}
\caption{A singular simplex $\sigma$ with a path $\gamma$ to $x_0$. }\label{fig:path-simplex}
\end{figure}

\subsection{Chains of family of configuration spaces with local coefficients}\label{ss:twisted-chain-family}

We shall explain a geometric interpretation of singular chains of the family $E\bConf_2(\pi)$ of configuration spaces (Definition~\ref{def:EConf_2} below) with local coefficients.
The direct product of projections $\beta\colon \widetilde{X}\times \widetilde{X}\to X\times X$ is a $\pi\times\pi$-covering of $X\times X$. The space $\widetilde{X}\times \widetilde{X}$ is the set of pairs $([\gamma_1],[\gamma_2])$ of equivalence classes of paths $\gamma_1,\gamma_2$ in $X$ to the basepoint $x_0\in X$, where we say that two such paths are equivalent if they are relatively homotopic in $X$ fixing the endpoints.
The subspace $\beta^{-1}\Delta_X\subset \widetilde{X}\times\widetilde{X}$ is a disjoint union of copies of lifts of $\Delta_X$ and the set of components in $\beta^{-1}\Delta_X$ corresponds bijectively to $\pi_1(X)$. 
The group $\Diff(X^\bullet,\partial)$ acts diagonally on $\widetilde{X}\times\widetilde{X}$ by $([g\circ \gamma_1],[g\circ \gamma_2])$, and on 
\[ \bConf_\vee(X)=B\ell(\widetilde{X}\times\widetilde{X},\beta^{-1}\Delta_X). \]

\begin{Def}\label{def:EConf_2}
We denote by 
\[ \begin{split}
&\bConf_2(\pi)\colon E\bConf_2(\pi)\to B\quad\mbox{ and } \quad \bConf_\vee(\pi)\colon E\bConf_\vee(\pi)\to B
\end{split} \]
the associated $\bConf_2(X)$-bundle with structure group $\Diff(X^\bullet,\partial)$ and $\bConf_\vee(X)$-bundle to the $X$-bundle $\pi\colon E\to B$. 
\end{Def}

As before, a chain of $S_p(E\bConf_\vee(\pi))\otimes_{\C[\pi^2]}A^{\boxtimes 2}$ can be considered as that of $S_p(E\bConf_2(\pi))\otimes_\C A^{\boxtimes 2}$ as follows. 
The local coefficient system $A$ on $E$ induces that on 
\[ E\times_{B}E=\{(x,y)\in E\times E\mid \pi(x)=\pi(y)\}\]
with coefficients in $A^{\boxtimes 2}$. Its structure can be interpreted as follows. A point of $E\times_{B}E$ can be given by data $(x_1,x_2,s)$ ($s\in B$, $x_1,x_2\in \pi^{-1}(s)$). The fiberwise universal cover $\widetilde{E}\times_{B}\widetilde{E}$ of $E\times_{B}E$ is given by the set of all data $(x_1,x_2,s,[\gamma_1],[\gamma_2])$, where $\gamma_i\colon [0,1]\to E$ is a path in a fiber $\pi^{-1}(s)$ from $x_i$ to the basepoint $\pi^{-1}(s)\cap \widetilde{x}_0$, and $[\gamma_i]$ is the equivalence class of $q\circ \gamma_i\colon [0,1]\to X$ under homotopy fixing endpoints, where $q\colon E\to X$ is the pointed degree 1 map fixed in \S\ref{ss:localc}.

A path $\eta$ in $E\times_B E$ is projected to a pair of two paths $(\eta_1,\eta_2)$ in $E$ by the fiberwise projections $\mathrm{pr}_i\colon E\times_B E\to E$, ($i=1,2$). By taking the holonomy of each path, we obtain an element of $\pi\times \pi$ (see Figure~\ref{fig:path-path}). This gives a homomorphism
\[ \mathrm{Hol}_{E\times_B E}\colon \Pi(E\times_B E)\to \mathrm{End}_{\C}(A^{\otimes 2}) \]
and gives a local coefficient system on $E\times_{B}E$ with coefficients in the $\pi\times \pi$-module $A^{\boxtimes 2}$. 
\begin{figure}
\begin{center}
\includegraphics{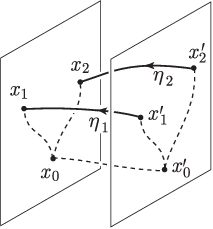}
\end{center}
\caption{A path $\eta$ between configurations in fibers}\label{fig:path-path}
\end{figure}

The local coefficient system on $E\times_{B}E$ given here induces that on $E\bConf_2(\pi)$. A chain of the $\pi\times\pi$-covering $E\bConf_\vee(\pi)$ of $E\bConf_2(\pi)$ has the following expression
\[ \sum_{(g,h)\in\pi^2}\sigma_{g,h}\cdot (g,h), \]
where $\sigma_{g,h}$ is a $\C$-chain of $E\bConf_2(\pi)$. By forwarding the action of $(g,h)$ to that on $A^{\boxtimes 2}$, a chain of $S_p(E\bConf_2(\pi))\otimes_\C A^{\boxtimes 2}$ is obtained.

The twisted boundary operator on $S_p(E\bConf_2(\pi))\otimes_\C A^{\boxtimes 2}$ can be defined by considering the $\pi\times\pi$-holonomy of the path $\eta$ between barycenters as in the case of $E$.

\subsection{Propagator in a family for $n$ odd}

\begin{Lem}\label{lem:E2-EConf}
We assume that $n$ is odd, $n\geq 3$. Let $X$ be a parallelizable $\Z$ homology $S^n\times S^1$ equipped with a local coefficient system $A$, let $\pi\colon E\to B$ be an $X$-bundle as in \S\ref{ss:moduli-sp}, and we assume that the base of $B$ is an $(n-2)$-dimensional closed oriented manifold. Under Assumptions~\ref{assum:acyclic} and \ref{assum:delta-trivial}, we have
\[ \begin{split}
	&H_{2n}(E\bConf_2(\pi);A^{\boxtimes 2})\cong H_0(B)\otimes_{\C}H_{2n}(\bConf_2(X);A^{\boxtimes 2})\\
	&H_{2n}(E\bConf_2(\pi),\partial E\bConf_2(\pi);A^{\boxtimes 2})\\
	&\hspace{30mm}\cong H_{n-2}(B)\otimes_\C H_{n+2}(\bConf_2(X),\partial \bConf_2(X);A^{\boxtimes 2})
\end{split} \]
and the natural map
\begin{equation}\label{eq:H2n-restrict}
 H_{2n}(E\bConf_2(\pi);A^{\boxtimes 2})\to H_{2n}(E\bConf_2(\pi),\partial E\bConf_2(\pi);A^{\boxtimes 2})
\end{equation}
is zero.
\end{Lem}
\begin{proof}
By Lemma~\ref{lem:H(Conf)} and Poincar\'{e}--Lefschetz duality, the $E^2$-terms of the Leray--Serre spectral sequences for the $\bConf_2(X)$-bundle and the $(\bConf_2(X),\partial \bConf_2(X))$-bundle over $B$ that converge to $H_{2n}$ are isomorphic to $E_{0,2n}^2$ and $E_{n-2,n+2}^2$, respectively, which agree with $H_{2n}$ (Figure~\ref{fig:E2}). Note that the action of $\pi_1B$ on the twisted homology of the fiber of $E\bConf_2(\pi)$ is trivial since the action of $\pi_1B$ factors through $\pi_0\Diff_0(X)$ and it is trivial on the generators obtained in Lemma~\ref{lem:H(Conf)}. Also, the maps induced from (\ref{eq:H2n-restrict}) on the terms $E_{0,2n}^2$ and $E_{n-2,n+2}^2$ are obviously 0, by the naturality of the Leray--Serre spectral sequence (e.g., \cite[Section~1.1]{Hat3}).
\begin{figure}
\[ \includegraphics[height=40mm]{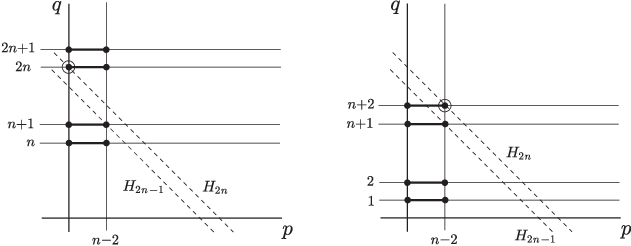} \]
\caption{$E^2_{p,q}$ for $H_*(E\bConf_2(\pi);A^{\boxtimes 2})$ and $H_*(E\bConf_2(\pi),\partial E\bConf_2(\pi);A^{\boxtimes 2})$. Nonzero terms are aligned on the thick lines.}\label{fig:E2}
\end{figure}
\end{proof}

\begin{Def}[Propagator in family]\label{def:propagator-family}
Let $n, X, A, \pi$ be as in Lemma~\ref{lem:E2-EConf}. Let $K$ be an oriented knot in $X$ that generates $H_1(X;\Z)$. Let $s_{\tau}\colon E\to ST^v(E)$ be the section that is given by the first vector of the framing $\tau$ and let $f\colon E\to X$ be a fiberwise pointed degree 1 map which is transversal to $K$. We identify $ST^v(E)$ with the unit normal $S^n$-bundle of the diagonal $\Delta_E\subset E\times_B E$ along the fiber. 
Let $\widetilde{K}=f^{-1}(K)$, which is a framed codimension $n$ submanifold of $E$. 
A {\it propagator in family} is a $2n$-chain $\omega$ of $E\bConf_2(\pi)$ with coefficients in $A^{\boxtimes 2}$ that is transversal to the boundary and that satisfies
\begin{equation}\label{eq:propagator-family}
   \partial_A\,\omega = s_\tau(E)\otimes c_A - \calO_{\tau_0}(X,A)\,ST^v(\widetilde{K})\otimes c_A,
\end{equation}
where for an oriented submanifold $\sigma$ of $E$, we denote by $ST^v(\sigma)$ the restriction of $ST^v(E)$ to $\sigma$. (For the reason of this definition, see Lemma~\ref{lem:ss-basis}).
\end{Def}

\begin{Lem}\label{lem:propagator-family}
Let $n, X, A, \pi$ be as in Lemma~\ref{lem:E2-EConf}. Then there exists a propagator in family. Moreover, two propagators $\omega, \omega'$ in family with $\partial_A\, \omega=\partial_A\, \omega'$ (as chains) are related by
\[ \omega' - \omega = \lambda\,ST(\Sigma^n)\otimes c_A + \partial_A \,\eta \]
for some $\lambda\in R$ and a $(2n+1)$-chain $\eta$ of $E\bConf_2(\pi)$ with coefficients in $A^{\boxtimes 2}$. 
\end{Lem}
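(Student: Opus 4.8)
The plan is to prove existence and then the uniqueness statement by separately analyzing the $2n$-cycle $s_\tau(E)\otimes c_A - \calO(X,A)\,ST(\widetilde{K})\otimes c_A$ and the indeterminacy. First I would observe that the chain $s_\tau(E)\otimes c_A - \calO(X,A)\,ST(\widetilde{K})\otimes c_A$ is a genuine relative cycle in $(E\bConf_2(\pi),\partial^\fib E\bConf_2(\pi))$ with $A^{\otimes 2}$-coefficients, because on each fiber $\bConf_2(X)$ it restricts (after the trivialization $\partial\bConf_2(X)\cong S^n\times X$ induced by $\tau$) to $s_{\tau_0}(X)\otimes c_A - \calO(X,A)\,ST(K)\otimes c_A$, which by Corollary~\ref{cor:prop-exists} lies in $\mathrm{Ker}\,i = \mathrm{Im}\,r$ of the long exact sequence for the pair in a fiber; fiberwise this bounds. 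The question is whether this fiberwise boundedness can be globalized over the base $B$, and this is exactly what Lemma~\ref{lem:E2-EConf} is designed to handle: the natural map $H_{2n}(E\bConf_2(\pi);A^{\otimes 2})\to H_{2n}(E\bConf_2(\pi),\partial E\bConf_2(\pi);A^{\otimes 2})$ vanishes, so the relative homology class of our chain — which lies in the image of this map, coming from the absolute cycle $s_\tau(E)\otimes c_A$ modified by a boundary — is zero in relative homology, hence the chain is an actual $\partial_A$-boundary of a $2n$-chain $P$ in $E\bConf_2(\pi)$. One has to be a little careful that $\calO(X,A)\,ST(\widetilde{K})\otimes c_A$ is closed in the absolute complex (it is, being the $S^1$-image of a closed chain, or by direct inspection of its boundary), so that the difference is a legitimate absolute cycle whose class we can push into relative homology; the transversality to the boundary can be arranged by a small perturbation.

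For the uniqueness statement, suppose $P$ and $P'$ are two propagators in family with $\partial_A P = \partial_A P'$ as chains. Then $P'-P$ is a $2n$-cycle in $E\bConf_2(\pi)$ with coefficients in $A^{\otimes 2}$, so it defines a class in $H_{2n}(E\bConf_2(\pi);A^{\otimes 2})$. By Lemma~\ref{lem:E2-EConf}, this group is isomorphic to $H_0(B)\otimes_\C H_{2n}(\bConf_2(X);A^{\otimes 2})$, and by Lemma~\ref{lem:H(Conf)} the fiber group $H_{2n}(\bConf_2(X);A^{\otimes 2})$ is the free rank-one $R$-module on $[ST(\Sigma^n)\otimes c_A]$. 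Hence $[P'-P] = \lambda\,[ST(\Sigma^n)\otimes c_A]$ in homology for some $\lambda\in R$, where here I use that $ST(\Sigma^n)$ (for a fixed fiber $\Sigma^n$ realized inside $E$ via a section of the bundle, or via $f^{-1}$ of $\Sigma^n\subset X$) gives a well-defined cycle in $E\bConf_2(\pi)$ representing the generator in the $H_0(B)$-summand. Therefore $P'-P-\lambda\,ST(\Sigma^n)\otimes c_A$ is a $2n$-cycle that is null-homologous, i.e. equals $\partial_A Q$ for some $(2n+1)$-chain $Q$, which is the desired relation.

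The main obstacle I expect is the bookkeeping around making everything compatible: one must check that $ST(\widetilde{K})\otimes c_A$ and $ST(\Sigma^n)\otimes c_A$ are well-defined \emph{closed} chains in the \emph{family} $E\bConf_2(\pi)$ (not just fiberwise), which requires that $\widetilde{K}=f^{-1}(K)$ is a closed submanifold of $E$ transverse to the relevant structure and that the coefficient $c_A$ is genuinely holonomy-invariant along it — this is where the invariance of $c_A$ under the diagonal $\pi$-action, together with $q=f$ pulling back the coefficient system, gets used, much as in the proof of Proposition~\ref{prop:O=0}. A secondary point is that $H_{2n}(E\bConf_2(\pi);A^{\otimes 2})\cong H_0(B)\otimes_\C H_{2n}(\bConf_2(X);A^{\otimes 2})$ requires the Leray--Serre spectral sequence argument of Lemma~\ref{lem:E2-EConf}, where one needs to know the differentials out of $E^2_{0,2n}$ vanish for dimension reasons given $\dim B = n-2$; this is already granted by the cited lemma, so I would simply invoke it. Everything else — transversality perturbations, the passage between singular chains with $A^{\otimes 2}$-coefficients and $\C$-chains of $E\bConf_2(\pi)$ tensored with $A^{\otimes 2}$ — is routine given the geometric setup established in the preceding subsections.
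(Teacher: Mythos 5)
Your uniqueness argument is essentially the paper's and is fine: $P'-P$ is a $2n$-cycle, $H_{2n}(E\bConf_2(\pi);A^{\otimes 2}) \cong H_0(B)\otimes H_{2n}(\bConf_2(X);A^{\otimes 2}) = R[ST(\Sigma^n)\otimes c_A]$ by Lemma~\ref{lem:E2-EConf} and Lemma~\ref{lem:H(Conf)}, so $P'-P = \lambda\,ST(\Sigma^n)\otimes c_A + \partial_A Q$.

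The existence argument, however, has a genuine gap in the globalization step. You need to show that the $(2n-1)$-cycle $c := s_\tau(E)\otimes c_A - \calO(X,A)\,ST(\widetilde{K})\otimes c_A$, which lives in $\partial E\bConf_2(\pi)$, is null-homologous in $E\bConf_2(\pi)$ — equivalently, that $\widetilde{i}\,[c]=0$, where $\widetilde{i}: H_{2n-1}(\partial E\bConf_2(\pi);A^{\otimes 2}) \to H_{2n-1}(E\bConf_2(\pi);A^{\otimes 2})$. Instead you invoke the vanishing of $H_{2n}(E\bConf_2(\pi);A^{\otimes 2})\to H_{2n}(E\bConf_2(\pi),\partial;A^{\otimes 2})$, concluding that the ``relative class of our chain is zero, hence it bounds.'' This does not follow: $c$ lives in the boundary, so its relative class is \emph{trivially} zero, and that tells you nothing about whether $\iota_*[c]$ vanishes. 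In the long exact sequence, vanishing of $H_{2n}\to H_{2n}^{\mathrm{rel}}$ only gives injectivity of the connecting map $\widetilde{r}: H_{2n}(E\bConf_2(\pi),\partial)\to H_{2n-1}(\partial E\bConf_2(\pi))$; it does not place $[c]$ in the image of $\widetilde{r}$. You are citing a statement in the wrong degree.

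What is actually needed (and what the paper does) is: (i) compute $H_{2n-1}(\partial E\bConf_2(\pi);A^{\otimes 2})$ and $H_{2n-1}(E\bConf_2(\pi);A^{\otimes 2})$ via the Leray--Serre spectral sequence and observe, using $\dim B = n-2$ together with the fiber homology in Lemma~\ref{lem:H(Conf)}, that both are concentrated in $E^2_{n-2,n+1} = E^\infty_{n-2,n+1}$; (ii) use naturality of the spectral sequence to identify $\widetilde{i}$ on this term with $(\mathrm{id}_{H_{n-2}(B)})\otimes i$, where $i: H_{n+1}(\partial\bConf_2(X);A^{\otimes 2})\to H_{n+1}(\bConf_2(X);A^{\otimes 2})$ is the fiberwise map of Corollary~\ref{cor:prop-exists}; and (iii) verify that the cycles $s_\tau(E)\otimes c_A$ and $ST(\widetilde K)\otimes c_A$ actually represent the classes $[B]\otimes[s_{\tau_0}(X)\otimes c_A]$ and $[B]\otimes[ST(K)\otimes c_A]$ in $E^\infty_{n-2,n+1}$. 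Step (iii) is itself nontrivial (the paper checks that $[\widetilde K]$ generates the relevant $E^2_{n-2,1}$ term in $H_{n-1}(E;R)$ via a duality argument) and is not addressed in your sketch. ``Fiberwise it bounds, hence it bounds'' is not automatic: the LSSS could in principle have differentials or extension problems, and the identification of your explicit cycles with the spectral sequence generators must be checked.
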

\begin{proof}
By the homology exact sequence for pair and by Lemma~\ref{lem:E2-EConf}, we have the following exact sequence.
\[\begin{split}
0&\to H_{2n}(E\bConf_2(\pi),\partial E\bConf_2(\pi);A^{\boxtimes 2})\\
&\stackrel{\widetilde{r}}{\to} H_{2n-1}(\partial E\bConf_2(\pi);A^{\otimes 2})
\stackrel{\widetilde{i}}{\to} H_{2n-1}(E\bConf_2(\pi);A^{\boxtimes 2})
\end{split} \]
Here, it follows from Lemma~\ref{lem:H(Conf)} and $\dim{B}=n-2$ that both 
\begin{equation}\label{eq:H(2n-1)}
 H_{2n-1}(\partial E\bConf_2(\pi);A^{\otimes 2})\quad\mbox{and}\quad H_{2n-1}(E\bConf_2(\pi);A^{\boxtimes 2}) 
\end{equation}
are $E_{n-2,n+1}^\infty=E_{n-2,n+1}^2$ in the Leray--Serre spectral sequence of bundles over $B$. The map induced between the $E^2$-terms is induced by the natural map
\[ 
i\colon H_{n+1}(\partial \bConf_2(X);A^{\otimes 2})
\to H_{n+1}(\bConf_2(X);A^{\boxtimes 2}), \]
by the naturality of the Leray--Serre spectral sequence (e.g., \cite[Section~1.1]{Hat3}). 
Hence $\widetilde{i}$ can be described explicitly by using the result for a single fiber (Corollary~\ref{cor:prop-exists}). Lemma~\ref{lem:ss-basis} below gives an explicit basis of $H_{2n-1}(\partial E\bConf_2(\pi);A^{\otimes 2})$ and the existence of a chain $\omega$ with the boundary (\ref{eq:propagator-family}) follows. 

For two propagators $\omega,\omega'$ in family with common boundary $\partial_A\,\omega=\partial_A\,\omega'$, the chain $\omega'-\omega$ is a $2n$-cycle of $E\bConf_2(\pi)$. By Lemmas~\ref{lem:E2-EConf} and \ref{lem:H(Conf)}, $\omega'-\omega$ is homologous to $\lambda\,ST(\Sigma^n)\otimes c_A$ for some $\lambda\in R$. This completes the proof.
\end{proof}

\begin{Lem}\label{lem:ss-basis}
Let $n, X, A, \pi$ be as in Lemma~\ref{lem:E2-EConf}. Then the $E_{n-2,n+1}^\infty$ in the Leray--Serre spectral sequences for
\[ H_*(\partial E\bConf_2(\pi);A^{\otimes 2})\mbox{ and }H_*(E\bConf_2(\pi);A^{\boxtimes 2}) \]
for the bundle structures over $B$ associated to $\pi$ 
are spanned over $R$ by the cycles $s_\tau(E)\otimes c_A,ST^v(\widetilde{K})\otimes c_A$, and by $ST^v(\widetilde{K})\otimes c_A$, respectively. 
\end{Lem}
\begin{proof}
Since $\partial E\bConf_2(\pi)\cong S^n\times E$ and $E\bConf_2(\pi)$ is homologically equivalent to $(D^{n+1},\partial D^{n+1})[-1]\times E$ (proof is the same as Lemma~\ref{lem:H(Conf)}) with untwisted coefficients in $R$, it suffices to check that $[E]$ and $[\widetilde{K}]$ span $H_{2n-1}(E;R)$ and $H_{n-1}(E;R)$, respectively. The former is obvious since $\dim{E}=2n-1$. 
For the latter, we consider the Leray--Serre spectral sequence for $\pi$ with untwisted coefficient $R$. The $E^2$ page is as in Figure~\ref{fig:E2_2}.
\begin{figure}
\[ \includegraphics[height=30mm]{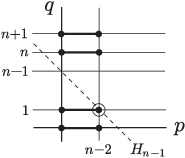} \]
\caption{$E^2_{p,q}$ for $H_*(E; R)$}\label{fig:E2_2}
\end{figure}
From this we have
\[ \begin{split}
  &E^2_{n-2,2}=E^\infty_{n-2,2}=H_{n-1}(E;R),\\
  &E^2_{0,n}=E^\infty_{0,n}=H_n(E;R),
\end{split} \]
and both modules are of rank 1. Since $[\Sigma^n]\in H_n(E;R)$, $[\widetilde{K}]\in H_{n-1}(E;R)$, and $\langle \Sigma^n, \widetilde{K}\rangle =\pm 1$,
it follows that $[\widetilde{K}]$ generates $H_{n-1}(E;R)$.
\end{proof}

\subsection{Invariant intersections of chains with local coefficients}

In \cite{Les1}, equivariant intersection form was used for intersections of chains.
On the other hand, on the twisted de Rham complex of a compact manifold, the pairing
\[ (\alpha,\beta)=\int_M \Tr(\alpha\wedge\beta) \]
is often used and it gives Poincar\'{e} duality for the twisted de Rham cohomology (e.g., \cite[\S{5.1}]{Sa2} for flat $SU(2)$-bundles).
In the following, we use an analogue of these pairings on singular chains of $\bConf_2(X)$ with coefficients in $A^{\boxtimes 2}$, which utilizes smoothness of $X$. 

As before, let $n,X,A,\pi$ be as in Lemma~\ref{lem:E2-EConf}. Let $\rho_A\colon \pi\to \mathrm{End}_\C(A)$ be the representation for $A$ as in \S\ref{ss:acyclic-complex}. We assume that the local coefficient system $A^{\boxtimes 2}$ on $\bConf_2(X)$ is defined as in \S\ref{ss:twisted-chain-family} using the intersections of paths with $\partial\bcalA_p(\xi)$ in $X$. 
\begin{Assum}
Suppose that two chains $C_1,C_2$ of $S_*(\bConf_\vee(X))\otimes_{\C[\pi^2]}A^{\boxtimes 2}=S_*(\bConf_2(X))\otimes_\C A^{\boxtimes 2}$ are piecewise strata transversal and satisfy the condition $\dim\,C_1+\dim\,C_2=\dim\,\bConf_2(X)=2n+2$. 
When $C_i=\sum_{k_i}\sigma^{k_i}_i\otimes x_{k_i}\otimes y_{k_i}$, where $\sigma^{k_i}_i\colon \Delta^{p_i}\to\bConf_2(X)$ is a small simplex and $x_{k_i},y_{k_i}\in A$, we assume that whenever $\mathrm{Im}\,\sigma_1^{k_1}\cap \mathrm{Im}\,\sigma_2^{k_2}\neq \emptyset$, the following conditions are satisfied:
\begin{itemize}
\item The intersection is transversal and at a single point $z$.
\item Let $\delta_0^{k_1},\delta_0^{k_2}$ be the barycenters of $\Delta^{p_1},\Delta^{p_2}$, respectively. Then there are paths $\eta_i^{k_i}$ in $\Delta^{p_i}$ from $\delta_0^{k_i}$ to $(\sigma_i^{k_i})^{-1}(z)$ such that
\begin{equation}\label{eq:Hol=1}
 \Hol_{\bConf_2(X)}(\sigma_1^{k_1}\circ\eta_1^{k_1})=\Hol_{\bConf_2(X)}(\sigma_2^{k_2}\circ\eta_2^{k_2})=1. 
\end{equation}
\end{itemize}
We say that a such pair $C_1,C_2$ is in {\it piecewise general position with respect to holonomy}. 
\end{Assum}

As the image of $\partial\bcalA_p(\xi)$ in $X$ is closed, we may assume by taking the simplices sufficiently small and by perturbing the intersections slightly that $C_1,C_2$ are piecewise general position with respect to holonomy. For such a pair $C_1,C_2$, we define $\langle C_1,C_2\rangle\in (A^{\otimes 2})^{\otimes 2}$ by the formula
\[ \langle C_1,C_2\rangle=\sum_{k_1,k_2}\langle \sigma_1^{k_1},\sigma_2^{k_2}\rangle_{\bConf_2(X)}(x_{k_1}\otimes y_{k_1})\otimes (x_{k_2}\otimes y_{k_2}), \]
where $\langle\sigma_1^{k_1},\sigma_2^{k_2}\rangle_{\bConf_2(X)}$ is defined by the sign of the intersection point $\sigma_1^{k_1}\cap \sigma_1^{k_1}$. Recall that the sign of the intersection is determined by comparing the wedge product of the {\it coorientations} of $\sigma_1^{k_1}$ and $\sigma_2^{k_2}$ and the orientation of $\bConf_2(X)$\footnote{This convention might not be usual but this will lead to a convention consistent with \cite{Wa2} given by integrals of forms along cycles (\cite[\S{D.2}]{Wa2}) by the correspondence $\langle M,N\rangle_R\leftrightarrow \int_R\eta_M\wedge\eta_N=\int_N\eta_M$ ($\eta_M$ etc. is a form representative of the Poincar\'{e} dual of $M$). In particular, we do not need to change or check the sign in the formula of Proposition~\ref{prop:normalization} (and also other signs in its proof) then.}. 
This can be generalized to more general pairs $C_1,C_2$ without the condition (\ref{eq:Hol=1}) by defining $\langle C_1,C_2\rangle$ by 
\begin{equation}\label{eq:int-hol}
 \sum_{k_1,k_2}\langle \sigma_1^{k_1},\sigma_2^{k_2}\rangle_{\bConf_2(X)}\Hol(\eta_1^{k_1})(x_{k_1}\otimes y_{k_1})\otimes \Hol(\eta_2^{k_2})(x_{k_2}\otimes y_{k_2}), 
\end{equation}
where $\Hol=\Hol_{\bConf_2(X)}$, which looks slightly complicated.

\begin{enumerate}
\item When $A$ is finite dimensional over $\C$, we define $\Tr\colon (A^{\otimes 2})^{\otimes 2}\to \C$ by
\[ \Tr((x_1\otimes y_1)\otimes (x_2\otimes y_2))=B(x_1,x_2^*)B(y_1,y_2^*), \]
where $x^*$ is the dual of $x$ with respect to $B$, namely, $x=\sum_i a_ie_i$ corresponds to $x^*=\sum_i \overline{a}_i e_i^*$. This is $\rho_A(\pi)$-invariant in the sense of the following identities.
\[ \begin{split}
  \Tr((x_1\otimes \rho_A(g)y_1)\otimes (x_2\otimes \rho_A(g)y_2))&=\Tr((x_1\otimes y_1)\otimes (x_2\otimes y_2))\\
  \Tr((\rho_A(g)x_1\otimes y_1)\otimes (\rho_A(g)x_2\otimes y_2))&=\Tr((x_1\otimes y_1)\otimes (x_2\otimes y_2))\\
\end{split} \]
Namely, invariant under the $\pi\times\pi$-action on $(A^{\boxtimes 2})^{\otimes 2}$. 
Also, $\Tr$ is $\C$-sesquilinear, i.e., for $\alpha,\beta\in \C$,
\[ \Tr(\alpha(x_1\otimes y_1)\otimes \beta(x_2\otimes y_2))
=\alpha\overline{\beta}\,\Tr((x_1\otimes y_1)\otimes(x_2\otimes y_2)). \]
This definition of $\Tr$ corresponds to the map $\End_\C(A)^{\otimes 2}\to \C$ given by $f\otimes h\mapsto \Tr(f\circ h^*)$. 

\item When $A=\Lambda=\C[t^{\pm 1}]$, we define $\Tr\colon \Lambda^{\otimes 2}\to \Lambda$ by
\[ \Tr(t^a\otimes t^b)=t^{a-b}. \]
This is $\Z$-invariant, i.e., $\Tr(t^{a+k}\otimes t^{b+k})=\Tr(t^a\otimes t^b)$.

\item When $A=\mathfrak{g}\otimes_\C \Lambda=\mathfrak{g}[t^{\pm 1}]$, $\Lambda=\C[t^{\pm 1}]$, we define $\Tr\colon (\mathfrak{g}^{\otimes 2}[t^{\pm 1}])^{\otimes 2}\to \Lambda$ by
\[ \Tr((x_1\otimes y_1)t^a\otimes (x_2\otimes y_2)t^b)=B(x_1,x_2^*)B(y_1,y_2^*)\,t^{a-b}. \]
This is $\mathrm{Ad}(G)\times\Z$-invariant and can be considered as obtained from a sesquilinear form on $\mathfrak{g}^{\otimes 2}$ by $\C[t^{\pm 1}]$-sesquilinear extension. 
\end{enumerate}
In each case, $\Tr$ is a Hermitian form, i.e., $\Tr(h\otimes f)=\overline{\Tr(f\otimes h)}$ ($f,h\in A^{\otimes 2}$), where $\overline{\alpha\beta}=\overline{\alpha}\overline{\beta}$ and $\overline{t^n}=t^{-n}$.

\begin{Lem}\label{lem:tr-coboundary}
$\Tr\langle\cdot,\cdot\rangle$ is a coboundary. Namely, when $\dim\,C_1+\dim\,C_2=\dim{\bConf_2(X)}+1=2n+3$, we have
\begin{equation}\label{eq:tr-cboundary}
 \Tr\langle \partial_A C_1,C_2\rangle + (-1)^{2n+2-\dim{C_2}}\Tr\langle C_1,\partial_A C_2\rangle =0. 
\end{equation}
Hence $\Tr\langle\cdot,\cdot\rangle$ induces well-defined sesquilinear pairings (corresponding to the above three cases)
\[ \begin{split}
  &H_p(\bConf_2(X),\partial \bConf_2(X);A^{\boxtimes 2})\otimes_\C H_q(\bConf_2(X);A^{\boxtimes 2})\to \C,\\
  &H_p(\bConf_2(X),\partial \bConf_2(X);\Lambda)\otimes_\Lambda H_q(\bConf_2(X);\Lambda)\to \Lambda,\\
  &H_p(\bConf_2(X),\partial \bConf_2(X);\mathfrak{g}^{\boxtimes 2}[t^{\pm 1}])\otimes_{\Lambda} H_q(\bConf_2(X);\mathfrak{g}^{\boxtimes 2}[t^{\pm 1}])\to \Lambda,
\end{split} \]
where $\Lambda=\C[t^{\pm 1}]$, $p+q=2n+2$.
We call $\Tr\langle\cdot,\cdot\rangle$ an \emph{invariant intersection}.
\end{Lem}
\begin{proof}
If we extend the above definition of $\langle\cdot,\cdot\rangle$ to the case $\dim\,C_1+\dim\,C_2=2n+3$ similarly, then $\langle C_1,C_2\rangle$ for piecewise strata transversal chains $C_i=\sum_{k_i}\sigma^{k_i}_i\otimes x_{k_i}\otimes y_{k_i}$ ($i=1,2$) gives a chain of $S_1(\bConf_2(X))\otimes_\C (A^{\boxtimes 2})^{\otimes 2}$. In this case, we assume that the simplices of $C_1,C_2$ of dimensions $\dim\,C_1,\dim\,C_2$, respectively, intersect transversally in 1-simplices. When the simplices $\sigma_1^{k_1}\colon \Delta^{p_1}\to \bConf_2(X),\sigma_2^{k_2}\colon\Delta^{p_2}\to \bConf_2(X)$ intersect transversally in an embedded 1-simplex $\zeta$, let $z$ be the barycenter of $\zeta$, and we choose paths $\eta_1^{k_1}$ in $\Delta^{p_1}$ from the barycenter $\delta_0^{k_1}$ of $\Delta^{p_1}$ to $(\sigma_1^{k_1})^{-1}(z)$ and $\eta_2^{k_2}$ in $\Delta^{p_2}$ from the barycenter $\delta_0^{k_2}$ of $\Delta^{p_2}$ to $(\sigma_2^{k_2})^{-1}(z)$. Then $\langle C_1,C_2\rangle$ is defined by
\[ \sum_{k_1,k_2}(\sigma_1^{k_1}\cap\sigma_2^{k_2})\otimes\Hol_{\bConf_2(X)}(\sigma_1^{k_1}\circ\eta_1^{k_1})(x_{k_1}\otimes y_{k_1})\otimes \Hol_{\bConf_2(X)}(\sigma_2^{k_2}\circ\eta_2^{k_2})(x_{k_2}\otimes y_{k_2}).
\]

We first prove the following identity
\begin{equation}\label{eq:int-cochain}
 (-1)^{2n+2-\dim{C_2}}\langle \partial_A C_1,C_2\rangle +\langle C_1,\partial_A C_2\rangle=\partial_A\langle C_1,C_2\rangle. 
\end{equation}
By the bilinearity of the intersection pairing, it suffices to prove this identity for $C_1=\sigma_\lambda\otimes m_\lambda$, $C_2=\sigma_\mu\otimes m_\mu$, where $\sigma_\lambda,\sigma_\mu$ are smooth singular simplices in $\bConf_2(X)$, $m_\lambda,m_\mu\in A^{\boxtimes 2}$. To prove it, we suppose the following (see Figure~\ref{fig:sigma-sigma}):
\begin{enumerate}
\item[(a)] $\sigma_\lambda$ and $\sigma_\mu$ are embeddings.

\item[(b)] $\sigma_\lambda$ and $\sigma_\mu$ intersect transversally in a 1-simplex $\zeta$ with $\partial \zeta=\alpha-\beta$. 

\item[(c)] The 0-simplex $\alpha$ is the intersection of $\sigma_\mu$ and a face $\tau_\lambda$ of $\sigma_\lambda$.

\item[(d)] The 0-simplex $\beta$ is the intersection of $\sigma_\lambda$ and a face $\tau_\mu$ of $\sigma_\mu$.
\end{enumerate}
Note that these conditions can be satisfied for generic pairs of simplices which are sufficiently small\footnote{More concrete examples are simplicial chains in a $c$-stratifold (\cite{Kre}) given by a smooth triangulation of $\bConf_2(X)$ into small simplices. We take two different such $c$-stratifold structures and then require stratifold transversality of a $c$-stratifold and each simplex in the other $c$-stratifold.}. 
For a pair of embedding simplices $\sigma,\sigma'$ such that the image of $\sigma'$ is included in that of $\sigma$, we denote by $\eta_{\sigma,\sigma'}$ a path in $\mathrm{Im}\,\sigma$ from the barycenter of $\sigma'$ to that of $\sigma$. Then we have the following.
\[\begin{split}
\partial_A\langle \sigma_\lambda\otimes m_\lambda, \sigma_\mu\otimes m_\mu\rangle
&=\partial_A(\zeta\otimes\Hol(\eta_{\sigma_\lambda,\zeta})m_\lambda\otimes\Hol(\eta_{\sigma_\mu,\zeta})m_\mu)\\
&=\alpha\otimes\Hol(\eta_{\sigma_\lambda,\zeta}\circ\eta_{\zeta,\alpha})m_\lambda\otimes\Hol(\eta_{\sigma_\mu,\zeta}\circ\eta_{\zeta,\alpha})m_\mu\\
&\phantom{=}-\beta\otimes\Hol(\eta_{\sigma_\lambda,\zeta}\circ\eta_{\zeta,\beta})m_\lambda\otimes\Hol(\eta_{\sigma_\mu,\zeta}\circ\eta_{\zeta,\beta})m_\mu,\\
\langle \partial_A(\sigma_\lambda\otimes m_\lambda), \sigma_\mu\otimes m_\mu\rangle
&=\langle \tau_\lambda\otimes\Hol(\eta_{\sigma_\lambda,\tau_\lambda})m_\lambda,\sigma_\mu\otimes m_\mu\rangle\\
&=\pm \alpha\otimes \Hol(\eta_{\sigma_\lambda,\tau_\lambda}\circ\eta_{\tau_\lambda,\alpha})m_\lambda\otimes\Hol(\eta_{\sigma_\mu,\alpha})m_\mu,\\
\langle \sigma_\lambda\otimes m_\lambda, \partial_A(\sigma_\mu\otimes m_\mu)\rangle
&=\langle \sigma_\lambda\otimes m_\lambda, \tau_\mu\otimes\Hol(\eta_{\sigma_\mu,\tau_\lambda})m_\mu,\rangle\\
&=-\beta\otimes \Hol(\eta_{\sigma_\lambda,\beta})m_\lambda\otimes\Hol(\eta_{\sigma_\mu,\tau_\mu}\circ\eta_{\tau_\mu,\beta})m_\mu,
\end{split} \]
where $\pm=-(-1)^{2n+2-\dim{C_2}}$. The signs are determined by the identity
\[ \alpha-\beta=\partial\zeta=\partial\langle\sigma_\lambda,\sigma_\mu\rangle
=(-1)^{2n+2-\dim{C_2}}\langle\partial\sigma_\lambda,\sigma_\mu\rangle
+\langle\sigma_\lambda,\partial\sigma_\mu\rangle. \]
(See Lemma~\ref{lem:d-int-sigma}.) Then the identity (\ref{eq:int-cochain}) follows from the following identities:
\[\begin{split}
&\Hol(\eta_{\sigma_\lambda,\zeta}\circ\eta_{\zeta,\alpha})=\Hol(\eta_{\sigma_\lambda,\tau_\lambda}\circ\eta_{\tau_\lambda,\alpha}),\quad \Hol(\eta_{\sigma_\mu,\zeta}\circ\eta_{\zeta,\alpha})=\Hol(\eta_{\sigma_\mu,\alpha}),\\
&\Hol(\eta_{\sigma_\lambda,\zeta}\circ\eta_{\zeta,\beta})=\Hol(\eta_{\sigma_\lambda,\beta}),\quad \Hol(\eta_{\sigma_\mu,\zeta}\circ\eta_{\zeta,\beta})=\Hol(\eta_{\sigma_\mu,\tau_\mu}\circ\eta_{\tau_\mu,\beta}).
\end{split}\]
\begin{figure}
\[ \includegraphics[height=55mm]{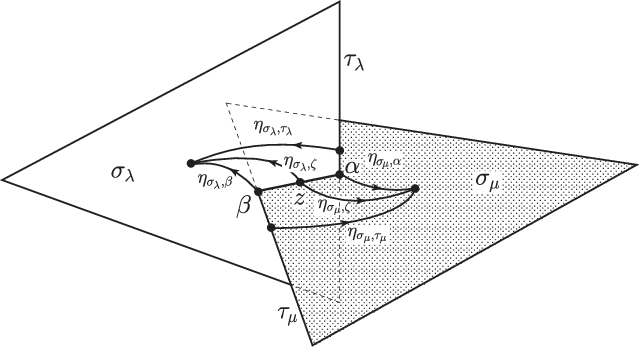} \]
\caption{The simplices $\sigma_\lambda$ and $\sigma_\mu$, intersecting in a 1-simplex $\zeta$.}\label{fig:sigma-sigma}
\end{figure}

As the trace factors through the projection of the coefficient module $(A^{\boxtimes 2})^{\otimes 2}$ over $\bConf_2(X)$ to the $\pi\times \pi$-invariant part, or $H_0(\bConf_2(X);(A^{\boxtimes 2})^{\otimes 2})$, we have
\[ \Tr\,\partial_A\langle C_1,C_2\rangle=\partial \Tr\langle C_1,C_2\rangle, \]
whose count with orientations (or the homology class) is zero. This completes the proof.
\end{proof}

\begin{Rem}
$\Tr\langle\cdot,\cdot\rangle$ is not a new object and would also be interpreted by the evaluation of the trace of the cup product 
\[\begin{split}
& H^{p'}(\bConf_2(X);A^{\boxtimes 2})\otimes_R H^{q'}(\bConf_2(X),\partial \bConf_2(X);A^{\boxtimes 2})\\
&\to H^{2n+2}(\bConf_2(X),\partial\bConf_2(X);R)
\end{split}\] $(p'+q'=2n+2$) of the Poincar\'{e}--Lefschetz duals at the fundamental class $[\bConf_2(X),\partial\bConf_2(X)]$. We will not prove the equivalence between the two interpretations.
\end{Rem}

\subsection{Cross product of cycles with local coefficients}

In Section~\ref{s:sformula}, we will need to consider cross products of two $\C$-cycles as $A^{\boxtimes 2}$-cycles. However, there may not be a canonical way to write a $\C$-chain of $X$ as an $A$-chain, unless $A$ is a $\C$-algebra with 1. Namely, there may not be a canonical $\C$-linear map $S_*(X;\C)\to S_*(X;A)$. Instead, we will use the following interpretation later (in Section~\ref{s:sformula}) to construct a possibly twisted dual basis to a basis consisting of cross products of $\C$-cycles. The precise statement we need is Lemma~\ref{lem:cross-product} below. For our purpose, it is enough to restrict to $A=\mathfrak{g}[t^{\pm 1}]$ and $R=\C[t^{\pm 1}]$. 
\begin{Lem}\label{lem:eta}
Suppose that $A$ is a free module over $R=\C$ or $\C[t^{\pm 1}]$ equipped with a finite $R$-basis $\{e_i\}$ and a nondegenerate symmetric $R$-bilinear form $B(\cdot,\cdot)$. For a subspace $V$ of $X$, we define $\C$-linear maps
\[ \begin{split}
& \eta\colon S_*(V;\C) \to \mathrm{Hom}_R(A,S_*(V;A)),\\
& \eta_A\colon S_*(V;A) \to \mathrm{Hom}_R(A,S_*(V;A)),\\
& \overline{\eta}_A\colon S_*(V;A) \to \mathrm{Hom}_R(A,S_*(V;A))
\end{split} \]
by $\eta(\sigma)=(a\mapsto \sigma\otimes a)$, $\eta_A(\gamma)=(e_i\mapsto (1\otimes B(\cdot,e_i^*)\,e_i)(\gamma))$, and $\overline{\eta}_A(\gamma)=(e_i^*\mapsto (1\otimes B(e_i,\cdot)\,e_i^*)(\gamma))$. Then $\eta_A$ and $\overline{\eta}_A$ are chain maps. Also, if the restriction of the local coefficient system $A$ on $V$ is trivial, then $\eta$ is a chain map, too. In that case, by the K\"{u}nneth formula (\cite[Theorem~VI.3.1a]{CE}), the maps between homologies
$\eta_*\colon H_*(V;\C)\to \mathrm{Hom}_R(A,H_*(V;A))$,
$\eta_{A*}\colon H_*(V;A)\to \mathrm{Hom}_R(A,H_*(V;A))$, 
$\overline{\eta}_{A*}\colon H_*(V;A)\to \mathrm{Hom}_R(A,H_*(V;A))$
are induced.
\end{Lem}
\begin{proof}
For $\gamma=\sum_\mu \gamma_\mu \otimes e_\mu\in S_*(\widetilde{X})\otimes_{\C[\pi]}A$, where $\widetilde{X}$ is the universal cover of $X$, we have
\[ \begin{split}
  \eta_A(\partial_A\gamma)&=\bigl(e_i\mapsto \sum_\mu (\partial\gamma_\mu)\otimes B(e_\mu,e_i^*)\,e_i\bigr)\\
  &=\bigl(e_i\mapsto (\partial\otimes \mathbbm{1})\sum_\mu \gamma_\mu\otimes B(e_\mu,e_i^*)\,e_i\bigr)=\partial_A(\eta_A\gamma).
\end{split}\]
This completes the proof that $\eta_A$ is a chain map. The proof for $\overline{\eta}_A$ is the same. When $\gamma$ is a chain of $S_*(V;\C)$ and $A$ is trivial as a local coefficient system on $V$, that $\eta$ is a chain map is proved as follows.
\[ \begin{split}
  \partial_A(\eta(\gamma))&=\partial_A(a\mapsto \gamma\otimes a)=(a\mapsto \partial_A(\gamma\otimes a)=\partial\gamma\otimes a)=\eta(\partial\gamma).
\end{split} \]
\end{proof}
\begin{Rem}
Note that the map $\eta$ may depend on the choice of the identification $S_*(X;A)=S_*(X)\otimes_\C A$. 
Also, the map $\eta_A$ depends on the choice of the $R$-basis $\{e_i\}$ of $A$. 
The definition of $\eta_A$ generalizes that of $\eta$ in the sense that for a $\C$-cycle $\sigma$, 
\[ \eta_A\Bigl(\sigma\otimes \sum_ie_i\Bigr)=(e_j\mapsto \sigma\otimes e_j)=\eta(\sigma). \]
There is no reason that the definition of $\eta_A$ above is the canonical choice. Under the duality, $\eta$ can also be considered as the map $S_*(V;\C)\to S_*(V;A^{\otimes 2})$ induced by the map $\C\to A^{\otimes 2}; 1\mapsto c_A=\sum_i e_i\otimes e_i^*$, and $\eta_A$ can be considered as the map $S_*(V;A)\to S_*(V;A^{\otimes 2})$ induced by the map $A\to A^{\otimes 2}; e_i\mapsto e_i\otimes e_i^*$. 
\end{Rem}

When the restrictions of $A$ on $V,W\subset X$ are both trivial, $\eta$ induces a homomorphism
\[  \eta_*^{\otimes 2}\colon H_*(V;\C)\otimes_\C H_*(W;\C)\to \mathrm{Hom}_R(A^{\otimes 2},H_*(V\times W;A^{\boxtimes 2})). \]
Here, the following identity holds in $S_*(V\times W;A^{\boxtimes 2})$:
\[ \eta^{\otimes 2}(\alpha\times \beta)(c_A)=(a\otimes b\mapsto (\alpha\times\beta)\otimes(a\otimes b))(c_A)=(\alpha\times\beta)\otimes c_A.\]
We denote this element by $\alpha\times_{c_A} \beta$. Then $\eta^{\otimes 2}(\alpha\times \beta)$ can be considered as the mapping $a\mapsto a(\alpha\times_{c_A}\beta)$. Note that the map $\eta_*^{\otimes 2}$ has indeterminacy by multiplication by an invertible element if $A^{\otimes 2}$ due to the choice of the stratification of $X$. If $V=W$, there is no indeterminacy.
When only the restriction of $W$ on $A$ is trivial, we still have
\[ \begin{split}
  &\eta_{A*}\otimes \eta_*\colon H_*(V;A)\otimes_\C H_*(W;\C)\to \mathrm{Hom}_R(A^{\otimes 2},H_*(V\times W;A^{\boxtimes 2})),\\
  &\eta_*\otimes \overline{\eta}_{A*}\colon H_*(W;\C)\otimes_\C H_*(V;A)\to \mathrm{Hom}_R(A^{\otimes 2},H_*(W\times V;A^{\boxtimes 2})).
\end{split}\]
We denote $(\eta_{A}\otimes\eta)(\alpha\times \beta)(c_A)$ or $(\eta\otimes\overline{\eta}_{A})(\alpha\times \beta)(c_A)$ simply by $\alpha\times_{c_A} \beta$ for $\alpha\in S_*(V;A)$, $\beta\in S_*(W;\C)$ etc. More generally,
\[ \eta_{A*}\otimes\overline{\eta}_{A*}\colon H_*(V;A)\otimes_\C H_*(W;A)\to \mathrm{Hom}_R(A^{\otimes 2},H_*(V\times W;A^{\boxtimes 2})) \]
is defined. Again, we denote $(\eta_{A}\otimes\overline{\eta}_{A})(\alpha\times \beta)(c_A)$ simply by $\alpha\times_{c_A} \beta$ for $\alpha\in S_*(V;A)$, $\beta\in S_*(W;A)$ etc. (See (\ref{eq:cross-sigma}) for the cross product of $A$-chains.) Explicitly, if $\alpha=\sum_\lambda \alpha_\lambda\otimes e_\lambda$ and $\beta=\sum_\mu \beta_\mu\otimes e_\mu^*$, then
\[ \alpha\times_{c_A}\beta
=\sum_i (\alpha_i\times\beta_i)\otimes e_i\otimes e_i^*. \]
With this definition of $\alpha\times_{c_A}\beta$, we have the following property, which is desired.
\begin{Lem}\label{lem:cross-product}
Let $A$ be as in Lemma~\ref{lem:eta}.
\begin{enumerate}
\item When the restrictions of $A$ on $V,W\subset X$ are both trivial, let $\alpha,\beta$ be $\C$-chains of $V,W$, respectively. Let $\gamma,\delta$ be $A$-chains of $X$. Then the following identity holds.
\[ \Tr\langle \alpha\times_{c_A}\beta, \gamma\times\delta\rangle
=\Tr\langle \alpha\times_{c_A}\beta, \gamma\times_{c_A}\delta\rangle,\]
where $\gamma\times\delta$ is the cross product of $\gamma$ and $\delta$.
\item Suppose that a submanifold $U$ of $X^{\times 2}$ is such that $H_*(U;A^{\boxtimes 2})$ is freely spanned over the algebra $A^{\otimes 2}$ by cycles of the forms $\alpha_i\times_{c_A} \beta_i$ ($i=1,\ldots,r$, $\alpha_i,\beta_i$ are $\C$-cycles of $X$). Suppose moreover that $\Tr\langle\cdot,\cdot\rangle$ gives a duality between $H_*(U;A^{\boxtimes 2})$ and $H_*(U,\partial U;A^{\boxtimes 2})$, and that the dual $A^{\otimes 2}$-basis of $H_*(U,\partial U;A^{\boxtimes 2})$ to $\{\alpha_i\times_{c_A} \beta_i\}$ is $\gamma_i\times \delta_i$ ($i=1,\ldots,r$, $\gamma_i,\delta_i$ are $A$-cycles of $X$). Then the dual $A^{\otimes 2}$-basis can be represented by the cycles $\gamma_i\times_{c_A} \delta_i$ ($i=1,\ldots,r$). In other words, $\gamma_i\times \delta_i$ is homologous to $\gamma_i\times_{c_A} \delta_i$.
\item Let $\alpha, \beta$ be $A$-chains of $X$. The following identity for $A^{\otimes 2}$-chains holds.
\[ \partial_A(\alpha\times_{c_A}\beta)=(\partial_A \alpha)\times_{c_A}\beta\pm \alpha\times_{c_A}(\partial_A \beta), \]
where the sign depends on the dimension of $\alpha$.
\end{enumerate}
\end{Lem}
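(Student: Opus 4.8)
The plan is to reduce (2) to (1) by a standard non-degeneracy argument; the substance is in (1), and (1) is just a matter of unwinding the definitions and using that, on coefficients, the operation ``spread by $\eta_A$'' is a two-sided inverse to ``contract against the Casimir $c_A$''.

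For (1): both sides are $R$-(sesqui)linear in $\gamma$ and in $\delta$ separately, so I would reduce to single terms $\gamma=\gamma_0\otimes u$, $\delta=\delta_0\otimes v$ with $\gamma_0,\delta_0$ $\C$-chains of $X$ and $u,v\in A$. The key bridge is a reformulation of $\Tr\langle\cdot,\cdot\rangle$ against the Casimir factor: since $\alpha\times_{c_A}\beta=(\alpha\times\beta)\otimes c_A$ with $c_A=\sum_i e_i\otimes e_i^*$ and $\Tr$ is assembled from $B$, the identity $\sum_i B(e_i,w)\,e_i^*=w$ gives, for every $A^{\otimes 2}$-chain $\Xi=\sum_\lambda\sigma_\lambda\otimes(m_\lambda^1\otimes m_\lambda^2)$ of $\bConf_2(X)$,
\[ \Tr\langle(\alpha\times\beta)\otimes c_A,\ \Xi\rangle=\langle\,\alpha\times\beta,\ \textstyle\sum_\lambda B(m_\lambda^1,m_\lambda^2)\,\sigma_\lambda\,\rangle_{\bConf_2(X)}; \]
that is, the Casimir in the first slot turns the pairing into the $\C$-intersection number of $\alpha\times\beta$ with the chain obtained by contracting the $A^{\otimes 2}$-coefficients of $\Xi$ by $B$. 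Applying this with $\Xi=\gamma\times\delta$ and with $\Xi=\gamma\times_{c_A}\delta$, it remains to check that these two chains have the same $B$-contracted coefficient; for the single terms above both contractions equal $B(u,v)(\gamma_0\times\delta_0)$, which unwinds from $\gamma\times_{c_A}\delta=\eta_{A*}^{\otimes 2}(\gamma\times\delta)(c_A)=\sum_i\eta_A(\gamma)(e_i)\times\eta_A(\delta)(e_i^*)$ together with the Casimir identity $\sum_i B(e_i,u)\,B(e_i^*,v)=B(u,v)$. This proves (1). The cases $A=\Lambda$ and $A=\mathfrak{g}^{\otimes 2}[t^{\pm 1}]$ are handled identically once $B$ is replaced by its $R$-sesquilinear extension; the Casimir carries only $t^0$, so the $t$-powers are inert in the contraction step.

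For (2): I would first note that $\gamma_i\times_{c_A}\delta_i$ is obtained from $\gamma_i\times\delta_i$ by applying the chain map $\eta_A^{\otimes 2}$ to the $A^{\otimes 2}$-coefficients only, hence has the same underlying $\C$-chains; in particular it is again a cycle rel $\partial U$ and defines a class in $H_*(U,\partial U;A^{\otimes 2})$. Since $\{\alpha_j\times_{c_A}\beta_j\}$ is an $A^{\otimes 2}$-basis of $H_*(U;A^{\otimes 2})$ and $\Tr\langle\cdot,\cdot\rangle$ is a non-degenerate pairing between $H_*(U,\partial U;A^{\otimes 2})$ and $H_*(U;A^{\otimes 2})$, the dual basis is unique and a class in $H_*(U,\partial U;A^{\otimes 2})$ is determined by its pairings with the $\alpha_j\times_{c_A}\beta_j$. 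Thus it suffices to prove $\Tr\langle\gamma_i\times_{c_A}\delta_i,\ \alpha_j\times_{c_A}\beta_j\rangle=\Tr\langle\gamma_i\times\delta_i,\ \alpha_j\times_{c_A}\beta_j\rangle\ (=\delta_{ij})$. This is (1) with the two factors of the pairing interchanged: the same computation shows $\Tr\langle\Xi,(\alpha_j\times\beta_j)\otimes c_A\rangle=\langle\sum_\lambda B(m_\lambda^1,m_\lambda^2)\sigma_\lambda,\ \alpha_j\times\beta_j\rangle_{\bConf_2(X)}$ (equivalently, move the Casimir factor across $\Tr\langle\cdot,\cdot\rangle$ by its (sesqui-)symmetry up to the intersection sign and invoke (1)), and $\gamma_i\times_{c_A}\delta_i$, $\gamma_i\times\delta_i$ have the same $B$-contracted coefficients as above. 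Hence $\gamma_i\times_{c_A}\delta_i$ represents the dual $A^{\otimes 2}$-basis element, i.e.\ $\gamma_i\times\delta_i$ is homologous to $\gamma_i\times_{c_A}\delta_i$ in $H_*(U,\partial U;A^{\otimes 2})$.

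The main difficulty is confined to (1) and is purely bookkeeping: one must keep the two roles of the $*$-operation apart (a dual basis vector of $A$ versus the functional $B(\cdot,x)$ appearing inside $\Tr$), and verify the mutual-inverse relation between $\eta_A$-spreading and $c_A$-contraction uniformly for all three coefficient systems $A$ in play. Once (1) is in hand, part (2) is a formal consequence of the non-degeneracy hypothesis.
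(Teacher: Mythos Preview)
Your proof is correct and follows essentially the same route as the paper's. The paper's argument for (1) is the same basis expansion you perform, just compressed into three lines: writing $\gamma=\sum_\lambda\gamma_\lambda e_\lambda$, $\delta=\sum_\mu\delta_\mu e_\mu^*$, it computes $\Tr\langle\alpha\times_{c_A}\beta,\gamma\times\delta\rangle=\sum_{\lambda,\mu}\langle\alpha\times\beta,\gamma_\lambda\times\delta_\mu\rangle\,\Tr(c_A\otimes(e_\lambda\otimes e_\mu^*))=\sum_\lambda\langle\alpha\times\beta,\gamma_\lambda\times\delta_\lambda\rangle$ and then recognizes this as the right-hand side; your introduction of the intermediate ``$B$-contracted coefficient'' is the same Kronecker-delta collapse $\Tr(c_A\otimes(e_\lambda\otimes e_\mu^*))=\delta_{\lambda\mu}$, just named. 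For (2) the paper simply says ``follows immediately from 1'', which is exactly your non-degeneracy argument spelled out.
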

\begin{proof}
Let $\gamma=\sum_{\lambda}\gamma_\lambda\otimes e_\lambda$, $\delta=\sum_\mu \delta_\mu\otimes e_\mu^*$, where $\gamma_\lambda$, $\delta_\mu$ are $R$-chains. Then by the sesquilinearity of $\Tr$ we have
\[ \begin{split}
  \Tr\langle \alpha\times_{c_A}\beta, \gamma\times\delta\rangle&=\sum_{\lambda,\mu}\langle \alpha\times\beta,\overline{\gamma}_\lambda\times \overline{\delta}_\mu\rangle \Tr(c_A\otimes (e_\lambda\otimes e_\mu^*))\\
  &=\sum_\lambda \langle \alpha\times\beta,\overline{\gamma}_\lambda\times \overline{\delta}_\lambda\rangle \Tr(c_A\otimes (e_\lambda\otimes e_\lambda^*))\\
  &=\Tr\langle \alpha\times_{c_A}\beta, \gamma\times_{c_A}\delta\rangle,
\end{split}\]
where $\overline{\gamma}_\lambda$ etc. are the conjugates of $\gamma_\lambda$ etc. on $R$.
The assertion 2 follows immediately from 1. 
The assertion 3 holds since $\alpha\times_{c_A}\beta=(\eta_{A}\otimes\overline{\eta}_{A})(\alpha\times\beta)(c_A)$ and $\eta_{A}\otimes\overline{\eta}_{A}$ is a chain map.
\end{proof}
The relation Lemma~\ref{lem:cross-product}-3 can be used to find a chain whose boundary is the right hand side, when $\partial_A\alpha$ and $\partial_A\beta$ are given by $\C$-cycles, for example.

\begin{Exa} We shall consider the case $R=A=\C[t^{\pm 1}]$. When the restriction of $A$ on $V,W\subset X$ are both trivial, let $\alpha,\beta$ be $\C$-chains of $V,W$, respectively, and let $F$ be a chain of $X^{\times 2}$ with coefficients in $A\otimes_R A=\C[t^{\pm 1}]$. In this case, the product $\alpha\times\beta$ can be considered as the mapping $1\mapsto \alpha\times_{c_A}\beta$ in
\[ \mathrm{Hom}_R\bigl(R,S_*(V\times W;R)\bigr). \]
Then the intersection $\Tr\langle F, \alpha\times \beta\rangle$ as an element of $\mathrm{Hom}_R(R,R)=R$ is 
\[ \Tr\langle F,\alpha\times_{c_A}\beta\rangle. \]

If $F=\sum_m F_m t^m$, where $F_m$ is a $\C$-chain of $X$, we have
\begin{equation}\label{eq:eq-linking}
\begin{split}
\Tr\langle F,\alpha\times_{c_A}\beta\rangle&=\sum_m\langle F_m,\alpha\times \beta\rangle_{X\times X}\, t^m\\
&= \sum_m\langle F,t^m(\alpha\times \beta)\rangle_{\widetilde{X}\times_\Z \widetilde{X}}\, t^m,
\end{split}
\end{equation}
where $\widetilde{X}\times_{\Z}\widetilde{X}$ is the quotient of $\widetilde{X}\times\widetilde{X}$ by the diagonal action of $\Z\colon$  $([\gamma_1],[\gamma_2])\mapsto (t^{\pm 1}[\gamma_1],t^{\pm 1}[\gamma_2])$ and the $\Z$-action on $\widetilde{X}$ is that of the restriction of the covering transformation to the subgroup $\{1\}\times \Z\subset \pi=\pi'\times\Z$.
The right hand side of (\ref{eq:eq-linking}) agrees with the definition of the equivariant intersection in \cite{Les1}. 

Both sides of (\ref{eq:eq-linking}) are determined only up to multiplication by invertible elements of $A^{\otimes 2}=R$, and the indeterminacy of the left hand side is due to the choice of the stratification of $X$. The indeterminacy of the right hand side is due to the choice of a lift of each given $\alpha\times\beta$ in the covering $\widetilde{X}\times_\Z \widetilde{X}$. \qed
\end{Exa}

\subsection{Linking number}

Let $\alpha,\beta$ be two disjoint null-homotopic embedded spheres of $X$ such that $\dim{\alpha}+\dim{\beta}=n$. We assume that $\alpha$ and $\beta$ are disjoint from the image of the boundary of $\bcalA_p(\xi)$ to define the local coefficient system on $X$. We define the linking number of $\alpha$ and $\beta$ by 
\[ \Lk_A(\alpha,\beta) = \Tr\langle \omega,\alpha\times_{c_A}\beta\rangle\in A^{\otimes 2}, \]
where $\omega$ is a propagator.
\begin{Lem}\label{lem:lk-symmetry}
$\Lk_A(\alpha,\beta)=(-1)^{(\dim\,\alpha+1)(\dim\,\beta+1)}\,\Lk_A(\beta,\alpha)^*$.
\end{Lem}
\begin{proof}
The cycle $\alpha\times_{c_A}\beta$ is homologous to an $A^{\otimes 2}$-linear combination of the cycles $\alpha_i\times_{c_A}\beta_i$ for finitely many small Hopf links $\alpha_i\cup \beta_i$ in $X$. This can be shown as follows. The inclusions of $\alpha$ and $\beta$ extends to smooth maps $D_\alpha\colon D^a\to X$, $D_\beta\colon D^b\to X$ of disks of dimensions $a=\dim{\alpha}+1$, $b=\dim{\beta}+1$, respectively. We take basepoints $u\in \alpha$, $v\in \beta$ and paths $\gamma_\alpha,\gamma_\beta\colon [0,1]\to X$ with $\gamma_\alpha(0)=u$, $\gamma_\alpha(1)=x_0$, $\gamma_\beta(0)=v$, $\gamma_\beta(1)=x_0$ that are disjoint from the image of the boundary of $\bcalA_p(\xi)$. We assume that the intersections $D_\alpha\cap\beta$ and $\alpha\cap D_\beta$ are both transversal. 

Now we deform $D_\alpha$ by precomposing a smooth contraction of the disk radially onto the basepoint $u$ as tight as possible, avoiding the transversal crossing points in $D_\alpha\cap \beta$. Then we also deform $D_\beta$ similarly. The resulting disks consist of finitely many small disks at the crossing points, which are connected to the basepoints $u,v$ of the original disks by thin bands (Figure~\ref{fig:D_alpha}). The result depends on which of $D_\alpha$ and $D_\beta$ is deformed first. By bordisms, the thin bands can be supressed and the result is a disjoint union of small Hopf links $\alpha_i\cup \beta_i$ at the transversal crossing points. The removed thin bands together with the paths $\gamma_\alpha,\gamma_\beta$ give paths $\gamma_{\alpha_i},\gamma_{\beta_i}$ to $x_0$ from basepoints on the components of each small Hopf link. The cycle $\alpha\times_{c_A}\beta$ is homologous to 
\[ \sum_i (\rho_A\mathrm{Hol}_X(\gamma_{\alpha_i})\otimes \rho_A\mathrm{Hol}_X(\gamma_{\beta_i}))(\alpha_i\times_{c_A}\beta_i). \]
Hence it suffices to prove the lemma for each term in this sum.
\begin{figure}[h]
\[ \includegraphics[height=50mm]{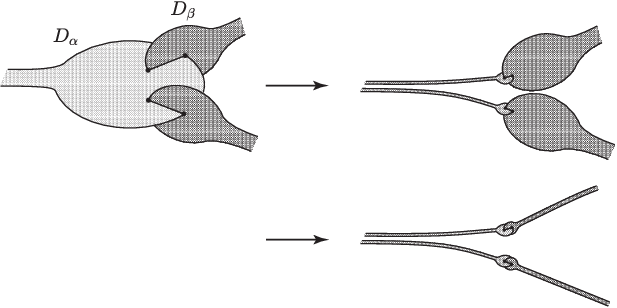} \]
\caption{Deforming $D_\alpha$ and $D_\beta$ into small Hopf links with bands.}\label{fig:D_alpha}
\end{figure}

We may also assume that each small Hopf link $\alpha_i\cup \beta_i$ is included in an $(n+1)$-ball that is disjoint from the boundary of the image of $\bcalA_p(\xi)$. Then we have
\[ \begin{split}
  &\Tr\langle \omega, (\rho_A\mathrm{Hol}_X(\gamma_{\alpha_i})\otimes \rho_A\mathrm{Hol}_X(\gamma_{\beta_i}))(\alpha_i\times_{c_A}\beta_i)\rangle\\
  =& \Tr(\rho_A \mathrm{Hol}_X(\gamma_{\beta_i}^{-1} \gamma_{\alpha_i}))\,\Lk(\alpha_i,\beta_i), 
\end{split}\]
where $\Lk(\alpha_i,\beta_i)$ is the linking number defined in a small ball including $\alpha_i$ and $\beta_i$. Now the result follows by the idenitities:
\[ \begin{split}
&\Tr(\rho_A \mathrm{Hol}_X(\gamma_{\alpha_i}^{-1} \gamma_{\beta_i}))=\Tr(\rho_A \mathrm{Hol}_X(\gamma_{\beta_i}^{-1} \gamma_{\alpha_i}))^*,\\
&\Lk(\beta_i,\alpha_i)=(-1)^{(\dim\,\alpha+1)(\dim\,\beta+1)}\,\Lk(\alpha_i,\beta_i).
\end{split} \]
\end{proof}

\mysection{Perturbative invariant}{s:invariant}

For compact oriented codimension $n$ submanifolds $F_1,F_2,F_3$ of $E\bConf_2(\pi)$ with corners that intersect strata transversally, we define their oriented intersection by
\begin{equation}\label{eq:Gamma-intersection}
 \langle F_1,F_2,F_3\rangle_\Theta
=F_1\cap F_2\cap F_3,
\end{equation}
which is an oriented codimension $3n$ submanifold with corners (\cite[Appendix]{BT}).
This is defined only when $F_1,F_2,F_3$ are in a general position. This can be extended to generic $\C$-chains of $E\bConf_2(\pi)$ by $\C$-linearity.

When $\dim{B}=n-2$, $E\bConf_2(\pi)$ is $3n$-dimensional and the intersection (\ref{eq:Gamma-intersection}) is 0-dimensional. We say that compact oriented codimension $n$ submanifolds $F_1,F_2,F_3$ of $E\bConf_2(\pi)$ with corners are in {\it general position with respect to holonomy} if the following conditions are satisfied:
\begin{itemize}
\item The intersection is transversal.
\item Let $\delta_0^1,\delta_0^2,\delta_0^3$ be the basepoints of $F_1,F_2,F_3$, respectively. Then for each triple intersection point $z$, there are paths $\eta_i$ in $F_i$ from $\delta_0^i$ to $z$ such that
\begin{equation}\label{eq:Hol=1-triple}
 \Hol_{E\bConf_2(\pi)}(\eta_1)
 =\Hol_{E\bConf_2(\pi)}(\eta_2)
 =\Hol_{E\bConf_2(\pi)}(\eta_3)=1. 
\end{equation}
\end{itemize}
For embedding simplices $\sigma_i\colon \Delta^{2n}\to E\bConf_2(\pi)$ ($i=1,2,3$) that are in general position with respect to holonomy and for $m_i\in A^{\boxtimes 2}$, we define the oriented intersection $\langle \sigma_1\otimes m_1,
\sigma_2\otimes m_2,
\sigma_3\otimes m_3\rangle_\Theta\in (A^{\boxtimes 2})^{\otimes 3}$ by
\begin{equation}\label{eq:theta-intersection}
 \langle \sigma_1, \sigma_2, \sigma_3\rangle_\Theta\,m_1\otimes m_2\otimes m_3. 
\end{equation}
This can be extended by $A^{\boxtimes 2}$-linearity to codimension $n$ chains $F_i$ ($i=1,2,3$) from
\begin{equation}\label{eq:id_S_2n}
 S_{2n}(E\bConf_\vee(\pi))\otimes_{\C[\pi^2]}A^{\boxtimes 2}=S_{2n}(E\bConf_2(\pi))\otimes_\C A^{\boxtimes 2}
\end{equation}
that are in piecewise general position with respect to holonomy. The condition (\ref{eq:Hol=1}) for the genericity can be removed by replacing (\ref{eq:theta-intersection}) with an analogue of (\ref{eq:int-hol}) for the triple intersection. Note that $\langle F_1,F_2,F_3\rangle_\Theta$ thus defined may depend on the identification (\ref{eq:id_S_2n}). 

\subsection{Definition of the invariant when $\calO_{\tau_0}(X,A)=0$, $n$ odd}\label{ss:inv-O=0}

Let $n, X=\Sigma^n\times S^1, A, \pi$ be as in Lemma~\ref{lem:E2-EConf}. We take propagators $\omega_1,\omega_2,\omega_3$ in family $E\bConf_2(\pi)$ with coefficients in $A^{\boxtimes 2}$ so that they are parallel on the boundary (Figure~\ref{fig:P-tilde}). Namely, let $s_\tau^{(i)}\colon E\to ST^v(E)\cong E\times S^n$ ($i=1,2,3$) be the sections that are obtained from $s_\tau$ (Definition~\ref{def:propagator-family}) by the actions on $S^n$ by three distinct elements $g^{(i)}\in SO_{n+1}$ ($i=1,2,3$) close to the identity, so that the images are disjoint. Then we assume that 
\begin{equation}\label{eq:bPi}
\partial_A\,\omega_i=s_\tau^{(i)}(E)\otimes c_A,
\end{equation}
which is possible by Example~\ref{ex:O=0}. 
Moreover, we assume that the three propagators are piecewise strata transversal. Each $\omega_i$ can be considered as an element of $S_{2n}(E\bConf_2(\pi);\C)\otimes_{\C}A^{\boxtimes 2}$ by the identification (\ref{eq:id_S_2n}).
\begin{figure}
\[ \includegraphics[height=35mm]{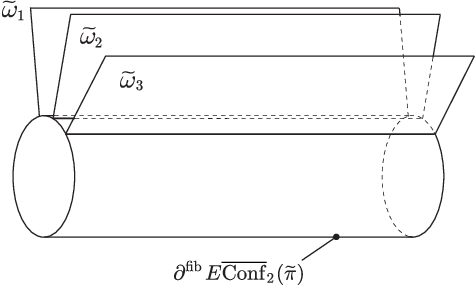} \]
\caption{$\widetilde{\omega}_i$ are parallel near the boundary.}\label{fig:P-tilde}
\end{figure}

\begin{Def}
When $n$ is odd, we define the invariant $Z_\Theta^\even$ by 
\[ Z_\Theta^\even(\omega_1,\omega_2,\omega_3)=\frac{1}{6}\Tr_\Theta\langle \omega_1,\omega_2,\omega_3\rangle_\Theta\in\calA_\Theta^\even(A^{\boxtimes 2};\rho_A(\pi)\times \Z), \]
where $\Tr_\Theta\colon (A^{\boxtimes 2})^{\otimes 3}\to \calA_\Theta^\even(A^{\boxtimes 2};\rho_A(\pi')\times \Z)$ is the projection (see Definition~\ref{def:A} for the definition of $\calA_\Theta^\even$). 
\end{Def}

\begin{Lem} The class $Z_\Theta^\even(\omega_1,\omega_2,\omega_3)$ does not depend on the choice of the identification (\ref{eq:id_S_2n}).
\end{Lem}
Thanks to the operator $\Tr_\Theta$, this follows for a similar reason as in Lemma~\ref{lem:tr-coboundary}. 
Since we fixed canonical lifts of the propagators by the boundary condition (\ref{eq:bPi}), there is no indeterminacy due to the choice of the stratification of $X$.

\begin{Thm}\label{thm:Z-inv-O=0}
Let $n, X=\Sigma^n\times S^1, A, \pi=\pi'\times\Z$ be as in Lemma~\ref{lem:E2-EConf}. Then $Z_\Theta^\even(\omega_1,\omega_2,\omega_3)$ does not depend on the choice of $\omega_i$, and gives a homomorphism
\[ Z_\Theta^\even\colon \Omega_{n-2}^{SO}(\widetilde{B\Diff}_{\mathrm{deg}}(X^\bullet,\partial))\to \calA_\Theta^\even(A^{\otimes 2};\rho_A(\pi')\times\Z). \]
\end{Thm}
\begin{proof}
Let $(\pi^+\colon E^+\to B^+,\tau_+)$, $(\pi^-\colon E^-\to B^-,\tau_-)$ be framed $(X,U_{x_0})$-bundles over closed oriented manifolds $B^+$, $B^-$ of dimension $n-2$ with fiberwise pointed homotopy equivalences $f^\pm\colon E^\pm \to X$ that are bundle bordant as framed $(X,U_{x_0})$-bundles with fiberwise pointed homotopy equivalences. Namely, there is a triple $(\widetilde{\pi},\widetilde{\tau},\widetilde{f})$ consisting of the following:
\begin{itemize}
\item A framed $(X,U_{x_0})$-bundle $\widetilde{\pi}\colon \widetilde{E}\to \widetilde{B}$ over a connected compact oriented cobordism $\widetilde{B}$ with $\partial \widetilde{B}=B^+\tcoprod (-B^-)$ that restricts to $\pi^+,\pi^-$ on the ends $\partial \widetilde{B}$.

\item A vertical framing $\widetilde{\tau}$ of $\widetilde{E}$ extending $\tau_{\pm}$ on the ends $\partial \widetilde{B}$. 

\item A smooth map $\widetilde{f}\colon \widetilde{E}\to X$ that restricts to $f^+,f^-$ on the ends $\partial \widetilde{B}$.
\end{itemize}
We assume that $\widetilde{f}$ and $\widetilde{\tau}$ satisfy the standardness condition near the basepoint $x_0$ as in Section~\ref{ss:moduli-sp}. The local coefficient systems $A$ on $E^{\pm}$ extends over $\widetilde{E}$ by the pullback by $\widetilde{f}$.

We take triples of propagators $(\omega_1^+,\omega_2^+,\omega_3^+)$ and $(\omega_1^-,\omega_2^-,\omega_3^-)$ that define $Z_\Theta^\even$ on the ends $B^+$ and $B^-$, respectively. We first assume that there is a triple of propagators $(\widetilde{\omega}_1,\widetilde{\omega}_2,\widetilde{\omega}_3)$ in family $E\bConf_2(\widetilde{\pi})$ that extends those on $\partial\widetilde{B}$, extended to the case when the base is not closed (cf. Lemma~\ref{lem:E2-EConf} and Definition~\ref{def:propagator-family} for the closed case). Namely, $\widetilde{\omega}_i$ is a twisted chain of $S_*(E\bConf_2(\widetilde{\pi});\C)\otimes_{\C}A^{\otimes 2}$, satisfying the following identity:
\begin{equation}\label{eq:bPtilde}
 \begin{split}
  &\partial_A\, \widetilde{\omega}_i=\omega_i^+-\omega_i^-+s_{\widetilde{\tau}}^{(i)}(\widetilde{E})\otimes c_A,
\end{split} 
\end{equation}
where $s_{\widetilde{\tau}}^{(i)}$ is a small perturbation of $s_{\widetilde{\tau}}\colon \widetilde{E}\to ST^v(\widetilde{E})$ obtained by applying $SO_{n+1}$-rotations $g^{(i)}$ as before in (\ref{eq:bPi}) so that the images are disjoint.

The intersection $\langle \widetilde{\omega}_1,\widetilde{\omega}_2,\widetilde{\omega}_3\rangle_\Theta$ gives a chain of $S_1(E\bConf_2(\widetilde{\pi}))\otimes_{\C}(A^{\boxtimes 2})^{\otimes 3}$ if $\widetilde{\omega}_i$ are chosen to be piecewise general position. Then the boundary of $\Tr_\Theta\langle \widetilde{\omega}_1,\widetilde{\omega}_2,\widetilde{\omega}_3\rangle_\Theta$ corresponds to the triple intersection of the chains in 
\begin{equation}\label{eq:bECtilde}
 \partial E\bConf_2(\widetilde{\pi})=E\bConf_2(\pi^+)-E\bConf_2(\pi^-)+\partial^\fib E\bConf_2(\widetilde{\pi}). 
\end{equation}
Indeed, as in the proof of Lemma~\ref{lem:tr-coboundary}, we have
\[ 
  \partial\,\Tr_\Theta\langle\widetilde{\omega}_1,\widetilde{\omega}_2,\widetilde{\omega}_3\rangle_\Theta=\Tr_\Theta\,\partial_A\langle\widetilde{\omega}_1,\widetilde{\omega}_2,\widetilde{\omega}_3\rangle_\Theta
 \]
and by (\ref{eq:bPtilde}) all the intersections that survive are in the right hand side of (\ref{eq:bECtilde}).
Since $\widetilde{E}$ has a vertical framing and the propagators $\widetilde{\omega}_1,\widetilde{\omega}_2,\widetilde{\omega}_3$ are parallel on $\partial^\fib E\bConf_2(\widetilde{\pi})$, they do not have intersection there. Hence the boundary only survives in $E\bConf_2(\pi^+)-E\bConf_2(\pi^-)$ and we have
\[\Tr_\Theta\langle \omega_1^+,\omega_2^+,\omega_3^+\rangle_\Theta
-\Tr_\Theta\langle \omega_1^-,\omega_2^-,\omega_3^-\rangle_\Theta=0.
\]
This completes the proof under the assumption of the existence of the triple $(\widetilde{P}_1,\widetilde{P}_2,\widetilde{P}_3)$.

In Lemma~\ref{lem:bordism-propagators} below, we shall see that a propagator $\widetilde{\omega}_i$ as assumed above exists if we replace $\omega_i^-$ with $\omega_i^-+ \lambda\,ST(\Sigma^n)'\otimes c_A$ for some $\lambda\in R$, where $ST(\Sigma^n)'$ is a parallel copy of $ST(\Sigma^n)$ (see Lemma~\ref{lem:H(Conf)} for the definition of $ST(\cdot)$) obtained by pushing slightly toward the inward normal vector field on the boundary of the base fiber $\bConf_2(X)$. Assuming this, it remains to show that the value of $\Tr_\Theta\langle \omega_1^-,\omega_2^-,\omega_3^-\rangle_\Theta$ does not change by additions of $\lambda\,ST(\Sigma^n)'\otimes c_A$. Namely, we prove the identity
\[\begin{split}
 &\Tr_\Theta\langle \omega_1^-+\lambda\,ST(\Sigma^n)'\otimes c_A,\omega_2^-+\lambda\,ST(\Sigma^n)''\otimes c_A,\omega_3^-+\lambda\,ST(\Sigma^n)'''\otimes c_A\rangle_\Theta\\
&=\Tr_\Theta\langle \omega_1^-,\omega_2^-,\omega_3^-\rangle_\Theta, 
\end{split} \]
where $ST(\Sigma^n)''$ and $ST(\Sigma^n)'''$ are parallel copies of $ST(\Sigma^n)'$ in the interior of $\bConf_2(X)$. Since $ST(\Sigma^n)'$, $ST(\Sigma^n)''$, $ST(\Sigma^n)'''$ are mutually disjoint, we need only to check that the terms of triple intersections among two propagators $\omega_i^-,\omega_j^-$ and one copy of $ST(\Sigma^n)$ vanish. By the boundary transversality assumption for the propagators, the values of these are the same as the intersections with $ST(\Sigma^n)$ on the boundary. By the explicit form (\ref{eq:bPi}) of the boundary of $\omega_i^-$, we see that the intersection among $\omega_i^-,\omega_j^-$ and $ST(\Sigma^n)$ is empty. This completes the proof that we may change $\omega_i^-$ as above without changing the value of $\Tr_\Theta\langle \omega_1^-,\omega_2^-,\omega_3^-\rangle_\Theta$. Note that the replacement of $\omega_i^-$ does not affect the argument in the previous paragraph since the boundary of $\omega_i^-$ is not changed, and there is no change in the triple near the boundary of $E\bConf_2(\pi^-)$. 

Now that $Z_\Theta^\even$ is a homomorphism follows by considering the disjoint union of two bundles for two elements of $\Omega_{n-2}^{SO}(\widetilde{B\Diff}_{\mathrm{deg}}(X^\bullet,\partial))$. The additivity of the intersection for the disjoint union is obvious. 
\end{proof}

\begin{Lem}[A bordism analogue of Lemma~\ref{lem:propagator-family}]\label{lem:bordism-propagators}
Let $\omega_i^\pm$, $s_{\widetilde{\tau}}^{(i)}(\widetilde{E})$ be as in the proof of Theorem~\ref{thm:Z-inv-O=0}. Then for some $\lambda\in R$, there exists a $(2n+1)$-chain $\widetilde{P}_i$ of $E\bConf_2(\widetilde{\pi})$ such that 
\[ \partial_A \widetilde{P}_i=\omega_i^+-(\omega_i^-+\lambda\,ST(\Sigma^n)'\otimes c_A)+s_{\widetilde{\tau}}^{(i)}(\widetilde{E})\otimes c_A. \]
\end{Lem}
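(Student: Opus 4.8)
The plan is to reduce the existence of $\widetilde{P}_i$ to the vanishing of a single homology class in the total space $E\bConf_2(\widetilde{\pi})$, and then to compute the relevant homology by the Leray--Serre spectral sequence, exploiting that the base cobordism $\widetilde{B}$ has nonempty boundary.

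First I would verify that the prescribed chain
\[ b_i(\lambda)=P_i^+-\bigl(P_i^-+\lambda\,ST(\Sigma^n)'\otimes c_A\bigr)+s_{\widetilde{\tau}}^{(i)}(\widetilde{E})\otimes c_A \]
is, for every $\lambda\in R$, a $2n$-dimensional $A^{\otimes 2}$-cycle of $E\bConf_2(\widetilde{\pi})$. Three inputs enter: (i) on each end the propagator-in-family equation of Definition~\ref{def:propagator-family} holds and $\calO(X,A)=0$ by Proposition~\ref{prop:O=0}, so $\partial_A P_i^\pm=s_{\tau_\pm}^{(i)}(E^\pm)\otimes c_A$; (ii) since $\widetilde{\tau}$ restricts to $\tau_\pm$ on the ends and $c_A$ is fixed by the diagonal holonomy along $\partial^\fib E\bConf_2(\widetilde{\pi})$, the face $s_{\widetilde{\tau}}^{(i)}(\widetilde{E})\otimes c_A$ of $\partial^\fib E\bConf_2(\widetilde{\pi})$ has twisted boundary accounting exactly for the two terms $s_{\tau_\pm}^{(i)}(E^\pm)\otimes c_A$, up to the corner-orientation signs of \cite[Appendix]{BT}; and (iii) $\Sigma^n$ is closed and the diagonal holonomy fixes $c_A$, so $ST(\Sigma^n)'\otimes c_A$ is itself an $A^{\otimes 2}$-cycle. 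Hence $\partial_A b_i(\lambda)=0$, and it suffices to choose $\lambda$ so that $[b_i(\lambda)]=0$ in $H_{2n}(E\bConf_2(\widetilde{\pi});A^{\otimes 2})$; then $b_i(\lambda)=\partial_A\widetilde{P}_i$ for some $(2n+1)$-chain $\widetilde{P}_i$, which can be put transversal to the boundary by a small perturbation.

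To identify the class I would run the Leray--Serre spectral sequence of the $\bConf_2(X)$-bundle over the connected compact oriented $(n-1)$-manifold-with-boundary $\widetilde{B}$, with fibre homology given by Lemma~\ref{lem:H(Conf)}. In total degree $2n$ and in the range $0\le p\le n-1$, the only possibly nonzero terms are $E^2_{0,2n}=H_0(\widetilde{B})\otimes_\C H_{2n}(\bConf_2(X);A^{\otimes 2})\cong R[ST(\Sigma^n)'\otimes c_A]$ and $E^2_{n-1,n+1}=H_{n-1}(\widetilde{B};\mathcal{H}_{n+1})$, where $\mathcal{H}_{n+1}$ is the local system with fibre $H_{n+1}(\bConf_2(X);A^{\otimes 2})$. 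But $\widetilde{B}$ is homotopy equivalent to a complex of dimension $\le n-2$ because $\partial\widetilde{B}\neq\emptyset$, so $H_{n-1}(\widetilde{B};\mathcal{L})=0$ for every local system $\mathcal{L}$, killing $E^2_{n-1,n+1}$. Since $E^2_{0,2n}$ lies on the bottom row, no differential enters it and every differential leaving it lands in a zero group, so $E^\infty_{0,2n}=E^2_{0,2n}$ and the edge map gives $H_{2n}(E\bConf_2(\widetilde{\pi});A^{\otimes 2})\cong R$, freely generated by $[ST(\Sigma^n)'\otimes c_A]$ (the image of the fibre generator over any interior point of $\widetilde{B}$). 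Therefore $[b_i(0)]=\lambda_i\,[ST(\Sigma^n)'\otimes c_A]$ for a unique $\lambda_i\in R$, and setting $\lambda:=\lambda_i$ yields $[b_i(\lambda)]=0$, producing $\widetilde{P}_i$.

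The main obstacle is the homology computation over a base with boundary: one must confirm that the $H_{n-1}(\widetilde{B})$-contribution genuinely disappears (so that no extra free or torsion summand survives) and that no hidden differentials or extension problems arise in this degenerate range; apart from this, the argument is the same bookkeeping as in Lemmas~\ref{lem:E2-EConf} and \ref{lem:propagator-family}. A secondary nuisance is pinning down the corner-orientation signs needed to make $b_i(\lambda)$ a cycle in the first place, which I would handle by matching the conventions already used in the proof of Theorem~\ref{thm:Z-inv-O=0}.
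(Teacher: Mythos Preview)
Your proposal is correct and follows essentially the same route as the paper: both reduce to showing that the $2n$-cycle lies in $H_{2n}(E\bConf_2(\widetilde{\pi});A^{\otimes 2})\cong R[ST(\Sigma^n)\otimes c_A]$, computed via the Leray--Serre spectral sequence using that $\widetilde{B}$, having nonempty boundary, behaves homologically like an $(n-2)$-complex. The paper's proof is terser (it simply invokes the analogy with Lemma~\ref{lem:E2-EConf}), while you have made explicit the verification that $b_i(\lambda)$ is a cycle and the vanishing of the $E^2_{n-1,n+1}$ term.
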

\begin{proof}
Since $\widetilde{B}$ is connected and does not have a closed component, its $\Z$ homology is isomorphic to that of an $(n-2)$-dimensional subcomplex and the computation of $H_*(E\bConf_2(\widetilde{\pi});A^{\boxtimes 2})$ is similar to that of Lemma~\ref{lem:E2-EConf} and is isomorphic to $E_{n-1,n+2}^2$. The $2n$-cycle $\omega_i^+-\omega_i^-+s_{\widetilde{\tau}}^{(i)}(\widetilde{E})\otimes c_A$ of $E\bConf_2(\widetilde{\pi})$ represents a class in 
\[ H_{2n}(E\bConf_2(\widetilde{\pi});A^{\boxtimes 2})=R[ST(\Sigma^n)\otimes c_A], \]
where the identification is due to Lemmas~\ref{lem:E2-EConf} and \ref{lem:H(Conf)}, and we consider $\Sigma^n$ is in the base fiber of $E^-$. Thus for some $\lambda\in R$, the $2n$-cycle $\omega_i^+-\omega_i^-+s_{\widetilde{\tau}}(\widetilde{E})\otimes c_A$ is homologous to $-\lambda ST(\Sigma^n)\otimes c_A$. This completes the proof.
\end{proof}

\mysection{Evaluation of the invariant}{s:sformula}

\subsection{$R$-decorated $\Theta$-graphs}

Let $R$ be an algebra over $\C$ having 1, and let $H$ be a subset of the set of units of $R$. We assume that $R$ has a $\C$-linear involution $R\to R$, which maps $p$ to its ``adjoint'' $p^*$, such that $p^*=p^{-1}$ for $p\in H$. We call a pair $(\Theta,\phi)$ of the following objects an {\it $R$-decorated $\Theta$-graph}.
\begin{enumerate}
\item $\Theta$: abstract, labeled, edge-oriented graph with two vertices connected by three edges, where a label is the pair of bijections $\alpha\colon \{1,2,3\}\to \mathrm{Edges}(\Theta)$ and $\beta\colon \{1,2\}\to \mathrm{Vertices}(\Theta)$, where $\mathrm{Edges}(\Theta)$ and $\mathrm{Vertices}(\Theta)$ are the sets of edges and vertices of $\Theta$, respectively.
\item $\phi\colon \mathrm{Edges}(\Theta)\to R$: a map.
\end{enumerate}
We also write an $R$-decorated graph $(\Theta,\phi)$ as $\Theta(x_1,x_2,x_3)$ ($x_i=\phi(\alpha(i))$).

\subsection{$\Theta$-graph surgery}

We take an embedding $\Theta\to X$ of a labeled, edge-oriented trivalent graph $\Theta$. The homotopy class of this embedding can be represented as $\Theta(g_1,g_2,g_3)$ ($g_i\in\pi$) for the $\C[\pi]$-decoration given by the $\pi$-valued holonomy for each edge. We consider that this expression also represents an embedded graph. 

We put an $(n+1)$-dimensional Hopf link of oriented spheres $S^{n-1}$ and $S^1$ at the middle of each edge, as in Figure~\ref{fig:theta-to-Y-link}. Here the spheres are oriented so that their linking number is $+1$. 
Then the two vertices of $\Theta$ give two disjoint Y-shaped components {\it Y-graphs} of Type I and II.
\[ \includegraphics[height=30mm]{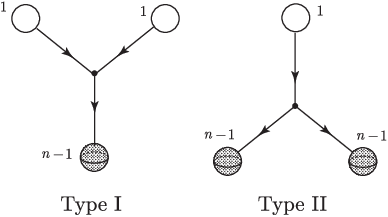} \]
We take small closed tubular neighborhoods of these objects and denote them by $V_1$ and $V_2$. They form a disjoint union of handlebodies embedded in $X$. A Type I Y-graph gives a handlebody (of Type I) which is diffeomorphic to the handlebody obtained from an $(n+1)$-ball by attaching two 1-handles and one $(n-1)$-handle in a standard way, namely, along unknotted unlinked standard attaching spheres in the boundary of the disk. A Type II Y-graph gives a handlebody (of Type II) which is diffeomorphic to the handlebody obtained from an $(n+1)$-ball by attaching one 1-handle and two $(n-1)$-handles in a standard way.
\begin{figure}
\begin{center}
\includegraphics{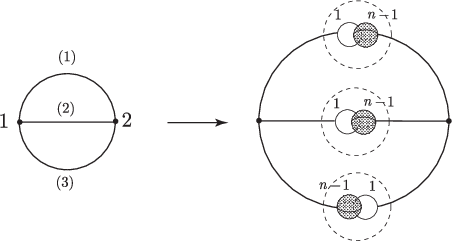}
\end{center}
\caption{Decomposition of embedded $\Theta$-graph into Y-shaped pieces.}\label{fig:theta-to-Y-link}
\end{figure}

Let $V=V_1$ be the Type I handlebody and let $\alpha_{\mathrm{I}}\colon S^0\to \Diff(\partial V)$, $S^0=\{-1,1\}$, be the map defined by $\alpha_{\mathrm{I}}(-1)=\mathbbm{1}$, and by setting $\alpha_{\mathrm{I}}(1)$ as the ``Borromean twist" corresponding to the Borromean string link $D^{n-1}\cup D^{n-1}\cup D^1\to D^{n+1}$. The detailed definition of $\alpha_{\mathrm{I}}$ can be found in \cite[\S{3.7}]{Wa2}.

Let $V=V_2$ be the Type II handlebody and let $\alpha_{\mathrm{II}}\colon S^{n-2}\to \Diff(\partial V)$ be the map defined by comparing the relative isomorphism class of the family of complements of an $S^{n-2}$-family of embeddings $D^{n-1}\cup D^1\cup D^1\to D^{n+1}$ obtained by parametrizing the second component in the Borromean string link $D^{n-1}\cup D^{n-1}\cup D^1\to D^{n+1}$ with the trivial family of the first and third components. The detailed definition of $\alpha_{\mathrm{II}}$ can be found in \cite[\S{3.8}]{Wa2}.

Let $B_\Theta=K_1\times K_2$ ($K_1=S^0$, $K_2=S^{n-2}$). Accordingly, let $\alpha_i\colon K_i\to \Diff(\partial V_i)$ be $\alpha_{\mathrm{I}}$ or $\alpha_{\mathrm{II}}$. By using the families of twists above, we define 
\[ E^\Theta=(B_\Theta\times (X-\mathrm{Int}\,(V_1\cup V_2)))\cup_\partial (B_\Theta\times (V_1\cup V_2)), \]
where the gluing map is given by
\[ \begin{split}
&\psi\colon B_\Theta\times (\partial V_1\cup\partial V_2)\to B_\Theta\times (\partial V_1\cup\partial V_2)\\
&\psi(t_1,t_2,x)=(t_1,t_2,\alpha_i(t_i)(x)) \quad \mbox{(for $x\in \partial V_i$)}.
\end{split}\]
Let $\pi_{V_i}\colon \widetilde{V}_i\to K_i$ be the $V_i$-bundle with the structure group $\Diff(V_i,\partial)$, which is obtained from $K_i\times V_i$ by identifying its boundary with $K_i\times \partial V_i$ by the $K_i$-family of diffeomorphisms $\alpha_i$:
\[ (\widetilde{V}_i,\partial \widetilde{V}_i)=((K_i\times V_i)\cup_{\alpha_i}(K_i\times \partial V_i),K_i\times \partial V_i). \]

\begin{Prop}[{Proposition~\ref{prop:degree1}}]\label{prop:v-framing}
The natural projection $\pi^\Theta\colon E^\Theta\to B_\Theta$ is an $X$-bundle, and it admits a vertical framing that is compatible with the surgery, and it gives an element of 
\[ \Omega_{n-2}^{SO}(\widetilde{B\Diff}_{\mathrm{deg}}(X^\bullet,\partial)).\]
We denote this element by
$\Psi_1(\Theta(g_1,g_2,g_3))$.
\end{Prop}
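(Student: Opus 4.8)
The plan is to verify the three assertions of the proposition in order: that $\pi^\Gamma$ is a genuine $X$-bundle (including a trivialization near the base point), that it carries a vertical framing extending $\tau_0$ on the base fiber and compatible with the trivialization near $\widetilde{x}_0$, and that together with the canonical fiberwise degree $1$ map it defines a bordism class in $\Omega_{(n-2)k}(\widetilde{B\Diff}_{\mathrm{deg}}(X^\bullet,\partial))$. The first point is essentially the construction itself: since each twist $\alpha_i(t_i)$ is a diffeomorphism of $\partial V^{(i)}$ supported away from a collar, the clutching formula for $\psi$ produces an honest fiber bundle with fiber $X$ and structure group $\Diff_0(X^\bullet,\partial)$ (the twists are isotopic to the identity and fix a neighborhood of a point, hence lie in $\Diff_0$); the trivialization near $\widetilde{x}_0$ comes for free because all surgeries take place inside the handlebodies $V^{(i)}$, which are disjoint from a neighborhood of $x_0$. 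This part is routine, following \cite[\S4]{Wa1}.

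For the vertical framing I would invoke the compatibility built into the definitions of $\alpha_{\mathrm{I}}$ and $\alpha_{\mathrm{II}}$ in \cite[\S4.5, \S4.6]{Wa1}: the Borromean twist and its parametrized version are constructed so as to preserve a chosen framing on $\partial V^{(i)}$ (this is exactly what is meant by ``compatible with the surgery''). Concretely, the standard framing $\tau_0$ on $X$ restricts to a framing on $X - \mathrm{Int}(V^{(1)}\cup\cdots\cup V^{(2k)})$, which is trivially extended over the $B_\Gamma$-direction; on the handlebody pieces $V^{(i)}\times B_\Gamma$ one uses that the family of diffeomorphisms $\alpha_i$ acts trivially on a reference framing near $\partial V^{(i)}$, so the two framings agree on the overlap and glue to a vertical framing $\tau$ of $E^\Gamma$ agreeing with $\tau_0$ on the base fiber. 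I would then cite Proposition~\ref{prop:degree1} (referenced in the statement as ``Proposition~\ref{prop:degree1}'') for the existence of the fiberwise pointed degree $1$ map $f:E^\Gamma\to X$: away from the surgery region $f$ is the projection to $X$, and over each $V^{(i)}\times B_\Gamma$ one uses that the surgered handlebody is still a homology $(n+1)$-ball rel boundary, so a degree $1$ map collapsing it exists and can be chosen fiberwise. Assembling the triple $(\tau, f)$ with $\pi^\Gamma$ gives a map $B_\Gamma \to \widetilde{B\Diff}_{\mathrm{deg}}(X^\bullet,\partial)$, and since $B_\Gamma = K_1\times\cdots\times K_{2k}$ is a closed oriented manifold of dimension $(n-2)k$ (each Type I factor $S^0$ contributing $0$, each Type II factor $S^{n-2}$ contributing $n-2$; one should note that the $S^0$ factors only affect orientation/sign, and the bordism class is defined by summing over the two choices), this yields the asserted element of $\Omega_{(n-2)k}$.

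The main obstacle is the framing compatibility across the gluing locus: one must check that the framing chosen on the handlebody side, after the parametrized Borromean twist, genuinely matches the product framing on the complement side over all of $(\partial V^{(i)})\times B_\Gamma$, not merely up to homotopy. This is where the careful construction of $\alpha_{\mathrm{I}}$ and $\alpha_{\mathrm{II}}$ in \cite{Wa1} as framing-preserving families is essential, and where I would spend the most care — in particular verifying that the $SO_{n+1}$-valued ``framing defect'' of the family is null, so that no correction term is needed. A secondary but lesser point is confirming that the degree $1$ map can be made fiberwise pointed and compatible with the trivialization near $\widetilde{x}_0$ simultaneously with the framing; since all the nontrivial data is concentrated in the disjoint pieces $V^{(i)}$ and $x_0$ lies outside them, this reduces to a local statement on a single handlebody family, which is handled in \cite[\S4]{Wa1}. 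Once these are in place, the identification of the resulting class with $\Psi_k(\Gamma(g_1,\ldots,g_{3k}))$ is just a matter of unwinding notation.
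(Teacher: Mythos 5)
Your overall structure matches the paper's: the bundle structure and trivialization near $x_0$ are routine from the construction, the vertical framing is imported from \cite[Lemma~4.12]{Wa1}, and the fiberwise pointed degree~$1$ map is exactly the content of Proposition~\ref{prop:degree1}, which is proved in \S\ref{s:pseudo-isotopy}. However, the in-line gloss you give for why the degree~$1$ map exists has a genuine gap. You write that ``the surgered handlebody is still a homology $(n+1)$-ball rel boundary, so a degree $1$ map collapsing it exists and can be chosen fiberwise.'' This is false as stated: a Type~I handlebody $V^{(i)}$ is obtained from $D^{n+1}$ by attaching two $1$-handles and one $(n-1)$-handle, so $V^{(i)}\simeq S^1\vee S^1\vee S^{n-1}$ and $H_*(V^{(i)},\partial V^{(i)})$ is far from that of a ball (similarly for Type~II). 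More importantly, the real difficulty is not the existence of a degree~$1$ map in each individual fiber but its \emph{continuity in the family parameter}. On the gluing locus the map must restrict to $\alpha_i(t_i)^{\pm 1}:\partial V^{(i)}\to\partial V^{(i)}$, and filling this in over $V^{(i)}$ continuously in $t_i$ requires the composite $K_i\to\Diff(\partial V^{(i)})\to\Map(\partial V^{(i)},V^{(i)})$ to be nullhomotopic — precisely the sort of nontriviality that the construction is designed to exploit. Your sketch does not address this, and ``can be chosen fiberwise'' is where the work actually lies.

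The paper resolves this differently, via the framed-link description of Lemmas~\ref{lem:6link-I} and~\ref{lem:6link-II}: attaching handles along the $B_\Gamma$-family of $12k$-component framed links to the top face of $(X\times I)\times B_\Gamma$ produces a fiberwise cobordism $\widetilde{E}$ between $E^\Gamma$ and $X\times B_\Gamma$; since the attaching spheres are fiberwise nullhomotopic in $X\times B_\Gamma$, the projection $X\times B_\Gamma\to X$ extends continuously over $\widetilde{E}$, and its restriction to the top face $E^\Gamma$ is the required fiberwise degree~$1$ map (degree~$1$ because a generic point of $X\setminus(V^{(1)}\cup\cdots\cup V^{(2k)})$ has a single preimage in each fiber). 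You should either carry out this cobordism argument or simply defer to Proposition~\ref{prop:degree1} without offering the ``homology ball / collapsing'' sketch. Your remarks on the framing compatibility, the dimension count $\dim B_\Gamma=(n-2)k$, and the role of the $S^0$ factors are fine and agree with the paper.
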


\begin{Thm}\label{thm:surgery}
Let $n+1\geq 4$ be an even integer and let $X,A$ be as in Lemma~\ref{lem:H(Conf)}. Let $\Theta(g_1,g_2,g_3)$ be a $\C[\pi]$-decorated $\Theta$-graph. 
Then the element
$ [\Psi_1(\Theta(g_1,g_2,g_3))]\in \Omega_{n-2}^{SO}(\widetilde{B\Diff}_{\mathrm{deg}}(X^\bullet,\partial))$
belongs to the image from $\pi_{n-2}\widetilde{B\Diff}_{\mathrm{deg}}(X^\bullet,\partial)$, and the following identity holds when $n$ is odd.
\[ Z_\Theta^\even(\Psi_1(\Theta(g_1,g_2,g_3)))=2\,[\Theta(\rho_A(g_1),\rho_A(g_2),\rho_A(g_3)] \]
\end{Thm}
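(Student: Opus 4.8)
## Proof proposal for Theorem \ref{thm:surgery}

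\textbf{Overall strategy.} The plan is to follow the scheme of Lescop \cite{Les1}, reducing the surgery formula to a local computation near the $\Theta$-graph. The key point is that graph surgery modifies $X$ only inside the disjoint handlebodies $V^{(1)},V^{(2)}$ (one of Type I and one of Type II, since $\varepsilon=n+1\bmod 2$ dictates the vertex types for a $\Theta$-graph with $2k=2$ vertices), so a propagator for $E^\Gamma\bConf_2(\pi^\Gamma)$ can be chosen to agree with a pulled-back propagator of $X$ outside a neighborhood of the surgery locus. The triple intersection $\langle P_1,P_2,P_3\rangle_\Theta$ then localizes: contributions from the ``outside'' region cancel or vanish (the propagators are parallel there and the bundle is vertically framed), and what survives is an intersection supported over the surgery region, which is computed combinatorially and yields the $\Theta$-graph with edge decorations $\rho_A(g_1),\rho_A(g_2),\rho_A(g_3)$.

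\textbf{Key steps.} First I would reduce, via Proposition \ref{prop:v-framing}, to evaluating $Z_\Theta^\varepsilon$ on the bordism class $\Psi_1(\Theta(g_1,g_2,g_3))$, using that it comes from $\pi_{n-2}$ and that $Z_\Theta^\varepsilon$ is a bordism invariant (Theorems \ref{thm:Z-inv-O=0}, \ref{thm:Z-inv-O=0-even}). Second, I would construct an explicit propagator $P$ in the family $E^\Gamma\bConf_2(\pi^\Gamma)$ adapted to the surgery: outside the $V^{(i)}\times B_\Gamma$ it is the product (with the constant $B_\Gamma$ direction) of a fixed fiber propagator of $\bConf_2(X)$, and inside it is modified using the Borromean twist families $\alpha_{\mathrm{I}},\alpha_{\mathrm{II}}$; existence and uniqueness-up-to-the-stated-ambiguity follow from Lemmas \ref{lem:propagator-family} / \ref{lem:propagator-family-even}. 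Third — this is the heart, isolated as Proposition \ref{prop:normalization} in the paper — I would do the local model computation: take three such propagators $P_1,P_2,P_3$ in general position and show that $\langle P_1,P_2,P_3\rangle_\Theta$ has exactly the intersection points predicted by the combinatorics of the $\Theta$-graph, each edge of $\Theta$ contributing a factor of $g_i$ (in holonomy) that, after applying $\Tr_\Theta$ and forwarding the $\pi^2$-actions to $A^{\otimes 2}$, assembles into $P\otimes_A$-type tensor $\rho_A(g_1)\wedge\rho_A(g_2)\wedge\rho_A(g_3)$ (or the symmetric product for $n$ even). The combinatorial factor $2$ comes from the two ways the Borromean configuration places the intersection, exactly as the coefficient $2$ appears in Lescop's and Watanabe's earlier surgery formulas; the $\frac16$ normalization in the definition of $Z_\Theta^\varepsilon$ is absorbed against the $3!$ labelings of edges and the orientation conventions on the intersection.

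\textbf{The role of the local coefficients.} The only genuinely new ingredient relative to \cite{Les1, Wa1} is that the propagator now carries coefficients in $A^{\otimes 2}$ and the holonomy along the core loops of the $V^{(i)}$'s acts nontrivially. Here I would use the geometric interpretation of twisted chains from \S\ref{s:bundle}: the segment between barycenters crossing a holonomy wall inside $\widetilde{K}=f^{-1}(K)$ multiplies the coefficient by $\rho_A(g_i)$ for the corresponding edge, and the invariance of $\Tr$ under $\mathrm{Ad}(G)\times\Z$ (resp.\ $\rho_A(\pi)$) guarantees the output lands in $\calA_\Theta^\varepsilon(A^{\otimes 2};\rho_A(\pi')\times\Z)$ well-defined up to the imposed relations. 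The fiberwise degree $1$ map $f$ enters because it pulls back the local system to $E^\Gamma$ compatibly with the surgery, so that the edge decorations $g_i\in\pi$ are exactly the holonomies read off from the embedded $\Theta\hookrightarrow X$.

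\textbf{Main obstacle.} The hardest part is the local computation of Proposition \ref{prop:normalization}: verifying that, in the Borromean/Y-graph local model, the three propagators meet transversally in precisely the configuration-space points corresponding to the $\Theta$-graph and in no others (no spurious intersections along the fiberwise boundary $\partial^\fib$ or along the auxiliary $ST(\Sigma^n)$ classes of Lemma \ref{lem:bordism-propagators}), and that the signs and the holonomy factors multiply out correctly. This is technically delicate because it requires controlling strata transversality for manifolds with corners (following \cite[Appendix]{BT}) across the blow-up $\bConf_2$, and tracking orientations through $\sigma_\Theta$. The paper states the outline ``is almost the same as given in \cite{Les1}'', so I would model the argument on Lescop's normalization lemma, inserting the twisted-coefficient bookkeeping at each intersection point; everything else (bordism invariance, existence of propagators, the reduction to $\pi_{n-2}$) is formal given the results already proved above.
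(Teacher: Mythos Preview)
Your overall strategy is correct and matches the paper's: normalize the propagator \`{a} la Lescop--Kuperberg--Thurston so that the triple intersection localizes to the handlebody region, then compute there. However, you have misidentified what Proposition~\ref{prop:normalization} actually asserts and how the holonomy and the factor $2$ enter, and this matters for the computation.

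Proposition~\ref{prop:normalization} is not the intersection computation; it is the construction of a single propagator $P$ in $E\bConf_2(\pi^\Theta)$ with a prescribed \emph{explicit form} on five regions. The crucial region is $E^\Theta(V^{(i)})\times_{B_\Theta}E^\Theta(V^{(j)})=\widetilde{V}^{(i)}\times\widetilde{V}^{(j)}$ for $i\neq j$, where the propagator is forced to equal
\[
\sum_{\ell,m}\ell k_A\bigl(z_\ell^{(i)},z_m^{(j)+}\bigr)^*\,\Sigma(\widetilde{a}_\ell^{(i)})\times_{c_A}\Sigma(\widetilde{a}_m^{(j)}).
\]
The edge holonomies $\rho_A(g_1),\rho_A(g_2),\rho_A(g_3)$ enter here, as the values of the equivariant linking numbers $\ell k_A(z_\ell^{(i)},z_m^{(j)+})\in A^{\otimes 2}$, not via wall-crossing of barycenter paths at the moment of triple intersection. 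On the other regions (1--4) the propagator is a product with a positive-dimensional factor or with $K_i$, so the triple intersection there vanishes either by dimension or by $S^0$-cancellation; this is what replaces your ``no spurious intersections'' step.

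With this normalization in hand, the actual proof of Theorem~\ref{thm:surgery} is a short algebraic count on $\widetilde{V}^{(1)}\times\widetilde{V}^{(2)}$ and $\widetilde{V}^{(2)}\times\widetilde{V}^{(1)}$. On each component the propagator restricts to $L_1^*S_1+L_2^*S_2+L_3^*S_3$ with $S_i=\Sigma(\widetilde{a}_i^{(1)})\times_{c_A}\Sigma(\widetilde{a}_i^{(2)})$, and the $\mathfrak{S}_3$-sum collapses to $6\langle S_1,S_2,S_3\rangle\,\Tr_\Theta(L_1^*\otimes L_2^*\otimes L_3^*)$; this is your ``$\tfrac16$ absorbs $3!$'' and it is correct. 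The factor $2$, however, is not a Borromean count: it comes from the \emph{two ordered components} $\widetilde{V}^{(1)}\times\widetilde{V}^{(2)}$ and $\widetilde{V}^{(2)}\times\widetilde{V}^{(1)}$ of the localized configuration space, and one must check a sign (the paper computes $\varepsilon=(-1)^{3(n-1)}(-1)^{(n+1)(2n-1)}=1$) to see that the two contributions add rather than cancel. You should make this sign check explicit; without it the argument is incomplete.
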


Combining Theorem~\ref{thm:surgery} with Proposition~\ref{prop:A-incl}, we obtain the following.
\begin{Cor}\label{cor:image-Z}
Let $n+1\geq 4$ be an even integer and let $X,A$ be as in Lemma~\ref{lem:H(Conf)}. Then the image of $Z_\Theta^\even$ in $\calA_\Theta^\even(\mathfrak{g}^{\otimes 2};\rho(\pi')\times\Z)$ includes the free abelian subgroup generated by the infinite set $\{\Theta(1,t,t^p)\mid p\geq 3\}$.
\end{Cor}

\begin{Cor}\label{cor:high-dim}
Let $n\geq 7$ be an integer of the form $4k-1$ and let $\Sigma^n=S^n/\pi'$, where $\pi'=\pi_1\Sigma(2,3,5)$. Then $\pi_{n-2}B\Diff_0(\Sigma^n\times S^1)$ includes the free abelian subgroup generated by the infinite set $\{\Psi_1(\Theta(1,t,t^p))\mid p\geq 3\}$.
\end{Cor}
\begin{proof}
By Remark~\ref{rem:Sigma-delta-trivial}, we can apply Theorem~\ref{thm:surgery} to $X=\Sigma^n\times S^1$.
According to Corollary~\ref{cor:abel-infinite}-2 and Proposition~\ref{prop:puncture}-2, we know by Theorem~\ref{thm:surgery} and Corollary~\ref{cor:image-Z} that $\pi_{n-2}B\Diff_0(\Sigma^n\times S^1)$ is of countable infinite rank. We can say more about the nontrivial subgroup. Namely, we know by (\ref{eq:pi-F-4k-1}), $\pi_{4k-3}\Omega(SO_{4k}\times S^1)=\pi_{4k-2}SO_{4k}$, and from the proofs of Corollary~\ref{cor:abel-infinite}-2 and Proposition~\ref{prop:puncture}-2 that the composition
\[ \pi_{n-2}\widetilde{B\Diff}_{\mathrm{deg}}(X^\bullet,\partial)\otimes\Q
\to \pi_{n-2}B\Diff_0(X^\bullet,\partial)\otimes\Q
\to \pi_{n-2}B\Diff_0(X)\otimes\Q \]
of the natural maps is injective. This proves that $\{\Psi_1(\Theta(1,t,t^p))\mid p\geq 3\}$ is linearly independent in $\pi_{n-2}B\Diff_0(X)\otimes\Q$.
\end{proof}

\subsection{Proof of Theorem~\ref{thm:surgery}}\label{ss:proof-surgery}

The outline of the proof is similar to \cite{KT}. Namely, proof of Theorem~\ref{thm:surgery} boils down to one lemma (Proposition~\ref{prop:normalization}), which guarantees the existence of conveniently normalized propagator. Proof of Proposition~\ref{prop:normalization} is almost the same as that of \cite[Proposition~11.1]{Les1} and mostly done in \cite[\S{6}--{7}]{Wa2}.

\subsubsection{A decomposition of $E^\Theta$}

Let $\Theta$ be the $\Theta$-graph. Let
$V_\infty = X-\mathrm{Int}(V_1\cup V_2)$,
$B_\Theta=K_1\times K_2$, and
\[ \begin{split}
  & \widetilde{V}_\lambda'=\left\{\begin{array}{ll}
  \widetilde{V}_1\times K_2 & (\lambda=1),\\
  K_1\times\widetilde{V}_2 & (\lambda=2),\\
  B_\Theta\times V_\infty & (\lambda=\infty).
\end{array}\right. 
\end{split} \]
This is a bundle over $B_\Theta$, which is canonically isomorphic to the pullback of the bundle $\pi(\alpha_\lambda)\colon \widetilde{V}_\lambda\to K_\lambda$ (for $\lambda=1,2$) or $V_\infty\to \{t^0\}$ by the projection $B_\Theta\to K_\lambda$ or $B_\Theta\to \{t^0\}$. Then we have
\[ E^\Theta=\widetilde{V}_1'\cup \widetilde{V}_2'\cup \widetilde{V}_\infty',
\]
where the boundaries are glued by the natural trivializations $\partial\widetilde{V}_\lambda'=B_\Theta\times\partial V_\lambda$ ($\lambda=1,2$) and $\partial\widetilde{V}_\infty'=B_\Theta\times (\partial V_1\cup \partial V_2)$. 

\subsubsection{A decomposition of $E\bConf_2(\pi^\Theta)$}

For $i,j\in\{1,2\}$ such that $i\neq j$, let 
\[ \Omega_{ij}=\widetilde{V}_i'\times_{B_\Theta} \widetilde{V}_j', \]
which is canonically diffeomorphic to $\widetilde{V}_i\times \widetilde{V}_j$.
For $i\in\{1,2,\infty\}$, let
\[ \Omega_{ii}=p_{B\ell}^{-1}(\widetilde{V}_i'\times_{B_\Theta} \widetilde{V}_i'),\quad
\Omega_{i\infty}=p_{B\ell}^{-1}(\widetilde{V}_i'\times_{B_\Theta} \widetilde{V}_\infty'),\quad
\Omega_{\infty i}=p_{B\ell}^{-1}(\widetilde{V}_\infty'\times_{B_\Theta} \widetilde{V}_i'),
\] 
where $p_{B\ell}\colon E\bConf_2(\pi^\Theta)\to E^\Theta\times_{B_\Theta} E^\Theta$ is the fiberwise blow-down map. We have
\[ E\bConf_2(\pi^\Theta)=\bigcup_{i,j}\Omega_{ij}. \]

\subsubsection{Some cycles of $V_i$ and $\widetilde{V}_i$}

\begin{enumerate}
\item Let $b_1^i, b_2^i, b_3^i$ be the cycles in $\partial V_i$ that are parallel to the cores of the three handles of positive indices with the orientations determined by those of the Hopf links in the Y-graph decomposition of embedded $\Theta$-graph. If $V_i$ is of Type I, two of the cycles $b_j^i$ are circles and one of the cycles $b_j^i$ is a $(n-1)$-dimensional sphere. If $V_i$ is of Type II, one of the cycles $b_j^i$ is a circle and two of the cycles $b_j^i$ are $(n-1)$-dimensional spheres. Let $a_1^i,a_2^i,a_3^i$ be dual spheres of $\partial V_i$ to $b_1^i,b_2^i,b_3^i$ with respect to the intersection in $\partial V_i$. They are the boundaries of the cocores of the three handles in $V_i$ of positive indices. We take a basepoint $p^i$ of $\partial V_i$ that is disjoint from the cycles $b_j^i,a_j^i$. We orient $a_j^i$ by the condition
\[ \Lk(b_j^{i-},a_j^i)=+1, \]
where $b_j^{i-}$ is a parallel copy of $b_j^i\subset \partial V_i$ in $\mathrm{Int}\,V_i$ obtained by shifting slightly.

\item Let $S(a_\ell^i)$ be a disk in $V_i$ that is bounded by $a_\ell^i$. Let $S(b_\ell^i)$ be a disk in $X-\mathrm{Int}\,V_i$ that is bounded by $b_\ell^i$. Let $\gamma^i$ be a 1-chain of $V_\infty$ with twisted coefficients in $A$ that is bounded by $p^i$, which exists by $H_0(V_\infty;A)=H_0(X;A)=0$. We orient $S(a_\ell^i)$ and $S(b_\ell^i)$ by the outward-normal-first convention from the orientations of the boundaries $a_\ell^i$ and $b_\ell^i$, respectively.

\item $S(b_\ell^i)$ may intersect a handle of $V_j$ ($j\neq i$) transversally. We assume that the intersection agrees with $S(a_m^j)$ for some unique $(m,j)$ up to orientation. This is possible according to the special linking property of the handlebodies $V_1,V_2$. 

For $i\neq\infty$, we identify a small tubular neighborhood of $\partial V_i$ in $X$ with $[-4,4]\times\partial V_i$ so that $\{0\}\times\partial V_i=\partial V_i$ and $\{-4\}\times\partial V_i\subset \mathrm{Int}\,V_i$. For a cycle $x$ of $\partial V_i$ represented by a manifold, let 
\[ x[h]=\{h\}\times x\subset [-4,4]\times\partial V_i \]
 and let $x^+$ denote a parallel copy of $x$ obtained by slightly shifting $x$ along positive direction in the coordinate $[-4,4]$. Here, $[-4,4]\times \partial V_i$ is a subset of a single fiber $X$. 
Also, let
\[ \begin{split}
  V_i[h]&=\left\{
  \begin{array}{ll}
    V_i\cup ([0,h]\times\partial V_i) & (h\geq 0),\\
    V_i-((h,0]\times\partial V_i) & (h<0),
  \end{array}\right.\\
  S_h(b_\ell^i)&=\left\{
  \begin{array}{ll}
    S(b_\ell^i)\cap (X-\mathrm{Int}({V}_i[h])) & (h\geq 0),\\
    S(b_\ell^i)\cup ([h,0]\times b_\ell^i) & (h<0),
  \end{array}
  \right.\\
  S_h(a_\ell^i)&=\left\{\begin{array}{ll}
  S(a_\ell^i)\cup ([0,h]\times a_\ell^i) & (h> 0),\\
  S(a_\ell^i)\cap V_i[h] & (h\leq  0),\\
  \end{array}\right.\\
  V_\infty[h]&=
    X-\mathrm{Int}\,(V_1[-h]\cup \cdots\cup V_{2k}[-h]).
\end{split}\]

\item The boundary of $\widetilde{V}_i$ ($i\neq\infty$) is $K_i\times \partial V_i$. The factor $K_i$ has nothing to do with the $[-4,4]$ in the previous item.
Let 
\[ \widetilde{b}_\ell^i=K_i\times b_\ell^i\quad\mbox{ and }\quad\widetilde{a}_\ell^i=K_i\times a_\ell^i,  \]
and orient them by\footnote{Please see \cite[\S{4.2}]{Wa2} for the motivation of this convention.}
\[ o(\widetilde{b}_\ell^i)=(-1)^{n-2}o(S^{n-2})\wedge o(b_\ell^i),\quad o(\widetilde{a}_\ell^i)=(-1)^{n-2}o(S^{n-2})\wedge o(a_\ell^i). \]
The cycle $\widetilde{a}_\ell^i$ bounds a submanifold $S(\widetilde{a}_\ell^i)$ of $\widetilde{V}_i$, which corresponds to a Seifert surface of one component in the Borromean link, and can be chosen so that its normal bundle is trivial (\cite[Lemma~4.2]{Wa2}). We orient $S(\widetilde{a}_\ell^i)$ by the outward-normal-first convention from $o(\widetilde{a}_\ell^i)$. 
 We assume without loss of generality that the intersection of $S(\widetilde{a}_\ell^i)$ with $[-4,4]\times \partial\widetilde{V}_i=K_i\times ([-4,4]\times \partial V_i)$ (in $\widetilde{V}_i$) agrees with $[-4,4]\times \widetilde{a}_\ell^i$. 

\end{enumerate}

\subsubsection{A normalization of propagator}

Let $\Omega$ be a subset of $E\bConf_2(\pi^\Theta)$. We say that a singular $p$-chain $C'$ of $\Omega$ is the {\it restriction} of a singular $p$-chain $C$ of $E\bConf_2(\pi^\Theta)$ if 
\begin{enumerate}
\item $C$ does not have a term of a $p$-simplex whose interior intersects $\Omega$ and at the same time whose image is not included in $\Omega$, 

\item the set $\Lambda_{C'}$ of $p$-simplices of the terms of $C'$ agrees with the set  $\Lambda_C$ of the $p$-simplices of the terms of $C$ whose interiors intersect $\Omega$, and

\item $C'$ is obtained from $C$ by ignoring the terms of $p$-simplices that are not in $\Lambda_C$.
\end{enumerate}
In this case, we denote $C'$ by $C|_\Omega$. We also say so if $C$ and $C'$ will satisfy the above condition after some subdivisions. 
Let $\Omega_{ij}(\{\ell\})$ be the restriction of the bundle $\Omega_{ij}\to B_\Theta$ on $K_\ell\subset B_\Theta$, where we identify $K_1\times \{t_2^0\}$ with $K_1$ and $\{t_1^0\}\times K_2$ with $K_2$. Let $\Omega_{ij}(\emptyset)$ be the restriction of the bundle $\Omega_{ij}\to B_\Theta$ on the basepoint $(t_1^0,t_2^0)$.

\begin{Prop}[Normalization of propagator]\label{prop:normalization}
There exists a propagator $\omega$ in family $E\bConf_2(\pi^\Theta)$ with coefficients in $A^{\boxtimes 2}$ that satisfies the following conditions.
\begin{enumerate}
\item $\omega|_{\Omega_{\infty\infty}}=B_\Theta\times \omega|_{\Omega_{\infty\infty}(\emptyset)}$.

\item $\omega|_{\Omega_{1\infty}}=\omega|_{\Omega_{1\infty}(\{1\})}\times K_2$, $\omega|_{\Omega_{2\infty}}=K_1\times \omega|_{\Omega_{2\infty}(\{2\})}$.

\item $\omega|_{\Omega_{\infty 1}}=\omega|_{\Omega_{\infty 1}(\{1\})}\times K_2$, $\omega|_{\Omega_{\infty 2}}=K_1\times \omega|_{\Omega_{\infty 2}(\{2\})}$.

\item $\omega|_{\Omega_{11}}=\omega|_{\Omega_{11}(\{1\})}\times K_2$, $\omega|_{\Omega_{22}}=K_1\times \omega|_{\Omega_{22}(\{2\})}$.

\item For $\{i,j\}=\{1,2\}$, 
\[ \omega|_{\Omega_{ij}}=\sum_{\ell,m}L_{\ell m}^{ij} S(\widetilde{a}_\ell^i)\times_{c_A} S(\widetilde{a}_m^j), \]
where $L_{\ell m}^{ij}=(-1)^n\Lk_A(b_\ell^i,b_m^j)^*$ and the sum is over $\ell,m$ such that $\dim{a_\ell^i}+\dim{a_m^j}=n$.
\end{enumerate}
Here, the direct products $\times$ in 1--4 are the cross products $S_*(K;\C)\otimes_\C S_*(\Omega_{ij}(J);A^{\boxtimes 2})\to S_*(\Omega_{ij};A^{\boxtimes 2})$ or $S_*(\Omega_{ij}(J);A^{\boxtimes 2})\otimes_\C S_*(K;\C)\to S_*(\Omega_{ij};A^{\boxtimes 2})$ for some $J\subset\{1,2,\infty\}$ and $K\subset B_\Theta$ such that $K\times \Omega_{ij}(J)\subset \Omega_{ij}$ or $\Omega_{ij}(J)\times K\subset \Omega_{ij}$. Furthermore, we assume that $B_\Theta\times \omega|_{\Omega_{\infty\infty}(\emptyset)}$ etc. is cooriented by the pullback of the coorientation of $\omega|_{\Omega_{\infty\infty}(\emptyset)}$ in $\Omega_{\infty\infty}(\emptyset)$ etc. $S(\widetilde{a}_\ell^i)\times_{c_A} S(\widetilde{a}_m^j)$ is cooriented by the wedge product of coorientations of $S(\widetilde{a}_\ell^i)\subset \widetilde{V}_i$, $S(\widetilde{a}_m^j)\subset \widetilde{V}_j$.
\end{Prop}
Proof of Proposition~\ref{prop:normalization} is postponed to \S\ref{ss:proof-normalization}.

\begin{proof}[Proof of Theorem~\ref{thm:surgery}]
We choose propagators $\omega_1,\omega_2,\omega_3$ as in Proposition~\ref{prop:normalization}.
On the parts 1--4 in Proposition~\ref{prop:normalization}, the triple intersection does not contribute to $\Tr_\Theta\langle \omega_1,\omega_2,\omega_3\rangle_\Theta$ since the triple intersection on each part is the direct product of that in strictly lower dimensional subspace and positive dimensional space, or the same value is counted on each point of $S^0$ with opposite orientation and is cancelled. Thus it follows that $\Tr_\Theta\langle \omega_1,\omega_2,\omega_3\rangle_\Theta$ agrees with that on the following subspace of $E\bConf_2(\pi^\Theta)$.
\[ \Omega_{12}\tcoprod \Omega_{21}=(\widetilde{V}_1\times \widetilde{V}_2)\tcoprod (\widetilde{V}_2\times \widetilde{V}_1) \]
The restriction to one component $\widetilde{V}_i\times \widetilde{V}_j$ corresponds to counting configurations such that the boundary vertices $v_1,v_2$ of each oriented edge $e=(v_1,v_2)$ of $\Theta$ is mapped to $\widetilde{V}_i$ and $\widetilde{V}_j$, respectively. 

Let us compute the value of $\Tr_\Theta\langle \omega_1,\omega_2,\omega_3\rangle_\Theta$ on the component $\widetilde{V}_1\times \widetilde{V}_2$. 
According to Proposition~\ref{prop:normalization}, the restriction of the propagator for the edge $e$ to $\widetilde{V}_1\times \widetilde{V}_2$ is of the form
\[ L_1^*\,S(\widetilde{a}_1^1)\times_{c_A}S(\widetilde{a}_1^2)
+L_2^*\,S(\widetilde{a}_2^1)\times_{c_A}S(\widetilde{a}_2^2)
+L_3^*\,S(\widetilde{a}_3^1)\times_{c_A}S(\widetilde{a}_3^2). \]
($L_i\in A^{\otimes 2}$) We abbreviate this as $L_1^*S_1+L_2^*S_2+L_3^*S_3$. Since the normal bundle of $S(\widetilde{a}_\ell^i)$ is trivial, it has no self-intersection. Moreover, we may assume that $S_1,S_2,S_3$ are in general position with respect to holonomy (\S\ref{s:invariant}). 
Hence the value of the intersection $\Tr_\Theta\langle \omega_1,\omega_2,\omega_3\rangle_\Theta$ on $\widetilde{V}_1\times \widetilde{V}_2$ is 
\[ \sum_{\sigma\in\mathfrak{S}_3}\langle S_{\sigma(1)},S_{\sigma(2)},S_{\sigma(3)}\rangle_{\widetilde{V}_1\times \widetilde{V}_2}
\Tr_\Theta(L_{\sigma(1)}^*\otimes L_{\sigma(2)}^*\otimes L_{\sigma(3)}^*). \]
When $n$ is odd, the terms of the sum are all equal due to the $\mathfrak{S}_3$-antisymmetry of the triple intersection form and the $\mathfrak{S}_3$-antisymmetry of $\Tr_\Theta$. Then the sum is 
\[ 6\,\langle S_1,S_2,S_3\rangle_{\widetilde{V}_1\times \widetilde{V}_2}\Tr_\Theta(L_1^*\otimes L_2^*\otimes L_3^*)=6[\Theta(\rho_A(g_1),\rho_A(g_2),\rho_A(g_3)]. \]

We may consider similarly for the value of $\Tr_\Theta\langle \omega_1,\omega_2,\omega_3\rangle_\Theta$ on the component $\widetilde{V}_2\times \widetilde{V}_1$. By Lemma~\ref{lem:lk-symmetry}, it can be written for some sign $\ve=\pm 1$ as
\[ \begin{split}
&6\,\langle S_1^*,S_2^*,S_3^*\rangle_{\widetilde{V}_2\times \widetilde{V}_1}\Tr_\Theta(L_1\otimes L_2\otimes L_3)\\
=&\,6\ve\,\langle S_1,S_2,S_3\rangle_{\widetilde{V}_1\times \widetilde{V}_2}\Tr_\Theta(L_1\otimes L_2\otimes L_3)
=6\ve[\Theta(\rho_A(g_1),\rho_A(g_2),\rho_A(g_3)],
\end{split} \]
where $((\sigma\times\tau)\otimes e_i\otimes e_j^*)^*=(\tau\times \sigma)\otimes e_j\otimes e_i^*$. 
Here, under the canonical identification $\widetilde{V}_1\times \widetilde{V}_2 \cong \widetilde{V}_2\times \widetilde{V}_1$, the coorientation of $S(\widetilde{a}_i^2)\times S(\widetilde{a}_i^1)\subset \widetilde{V}_2\times \widetilde{V}_1$ differs from that of $S(\widetilde{a}_i^1)\times S(\widetilde{a}_i^2)\subset \widetilde{V}_1\times \widetilde{V}_2$ by $(-1)^{n-1}$. Also, the orientation of $\widetilde{V}_2\times \widetilde{V}_1$ and of $\widetilde{V}_1\times \widetilde{V}_2$ induced from that of $E\bConf_2(\pi)$ differs by $(-1)^{(n+1)\{(n+1)+(n-2)\}}$. Hence we have
\[ \ve= \{(-1)^{n-1}\}^3\times (-1)^{(n+1)\{(n+1)+(n-2)\}} =1. \]
(Similar computations of the effects of the signs of the intersections under the graph automorphisms are done in \cite{Wa2} for general trivalent graphs.)

Now we obtain the following.
\[\begin{split}
Z_\Theta^\even(\Psi_1(\Gamma(g_1,g_2,g_3)))
&=\frac{1}{6}\cdot 12\,[\Theta(\rho_A(g_1),\rho_A(g_2),\rho_A(g_3)]\\
&=2\,[\Theta(\rho_A(g_1),\rho_A(g_2),\rho_A(g_3)].
\end{split}\]
This completes the proof.
\end{proof}

\subsection{Proof of Proposition~\ref{prop:normalization}}\label{ss:proof-normalization}

Proposition~\ref{prop:normalization} can be proved in exactly the same way as \cite[Proposition~4.5]{Wa2}, except the following points, for $V=V_j$:

\begin{enumerate}
\item The integral $\int_C\omega$ is replaced by the invariant intersection $\Tr\langle \omega,C\rangle$. 

\item The computations of $H_*(X-V)$ and $H_*(V\times (X-\mathrm{Int}\,V[3]))$ etc. in \cite[Lemma~6.2]{Wa2} are replaced by Lemmas~\ref{lem:H(X-V)}, \ref{lem:H(W)}, and \ref{lem:H,d} below.

\item In \cite[\S{6}]{Wa2}, a propagator $\omega_0$ on $\bConf_2(X)$ is modified as 
\[ \omega_1=\omega_0+d(\chi\mu)\]
for a form $\mu$ defined on a codimension 0 compact submanifold $U$ of $\bConf_2(X)$ with a collar neighborhood $\partial U\times [0,1)$, and a smooth function $\chi\colon U\to [0,1]$ such that $\chi=0$ near $\partial U\times\{0\}=\partial U$ and $\chi=1$ on a neighborhood of $U_1:=U-(\partial U\times[0,1))$, so that $\omega_1|_{U_1}=\omega_0+d\eta$ is a certain explicit form on $U_1$. For a chain propagator $\omega_0$ on $\bConf_2(X)$ (of codimension $n$) in this paper, we take a codimension $n-1$ chain $\eta$ supported on a small closed $\ve$-neighborhood $N$ of $U_1$ in $U$ and the restriction $\omega_0[\ve]$ of $\omega_0$ to $(\bConf_2(X)-U)\cup (\partial U\times [0,\ve))$ for a small $\ve>0$, which is disjoint from $N$. Then we take an extension $\omega_1$ of $\omega_0[\ve]+\partial_A\eta$ over $\bConf_2(X)$, which agrees with $\omega_0$ on a neighborhood of $\bConf_2(X)-U$ and with $\omega_0+\partial_A\eta$ on a neighborhood of $U_1$.

\item The pullback of a form $\omega$ on $\{t_0^i\}\times W$ to $K_i\times W$ as in \cite[\S{6.4}]{Wa2}, where $t_0^i\in K_i$ is the basepoint and $W$ is a subset of $\bConf_2(X)$, is replaced by the cross product $K_i\times \omega$.

\item The restriction of the partially normalized propagator $\omega_{4,i}'$ to $\partial E\bConf_2(\widetilde{V})$, analogous to that of \cite[\S{7}]{Wa2}, is of the form $\omega''\otimes c_A$ for some $\C$-chain $\omega''$, since the local coefficient system $A$ is trivial on $\widetilde{V}$ by construction of $V$ from a trivalent graph (a Y-graph is included in a ball), and its evaluation over a small Hopf link gives the value $1$. 
\end{enumerate}
For other parts, the proof of Proposition~\ref{prop:normalization} is exactly the same as \cite[Proposition~4.5]{Wa2} and we do not repeat the same argument. To state the lemmas announced in the item 2, we put $V=V_j$ and abbreviate $a_i^j,b_\ell^j,\gamma^j$ etc. as $a_i,b_\ell,\gamma$ etc. for simplicity. 

\begin{Lem}\label{lem:H(X-V)}
$H_i(X-V;A)=H_{i+1}(V,\partial V;\C)\otimes_\C A$ for $i>0$. Namely, for $i> 0$,
\[ H_i(X-V;A)=\Bigl[\C[a_1]\oplus \C[a_2]\oplus \C[a_3]\oplus \C[\partial V]\Bigr]_i\otimes_\C A. \]
\end{Lem}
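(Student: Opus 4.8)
The plan is to identify $H_*(X\setminus V;A)$ with the relative homology of the handlebody $(V,\partial V)$, using that $A$ is acyclic on $X$ and, crucially, that $A$ restricts to a \emph{trivial} local system on $V=V^{(j)}$. First I would pass to the compact complement $X\setminus\mathring{V}$, which is homotopy equivalent to $X\setminus V$ through a collar, and apply the long exact sequence of the pair $(X,X\setminus\mathring{V})$ with coefficients in $A$. Since $H_*(X;A)=0$ by Assumption~\ref{assum:acyclic}, the connecting homomorphism is an isomorphism $H_i(X\setminus\mathring{V};A)\cong H_{i+1}(X,X\setminus\mathring{V};A)$, and excising the complement of a closed collar neighbourhood of $V$ identifies the latter with $H_{i+1}(V,\partial V;A)$. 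Hence $H_i(X\setminus V;A)\cong H_{i+1}(V,\partial V;A)$ for all $i$, and under this isomorphism the boundary $a_\ell$ of a cocore of a positive-index handle of $V$ corresponds to the relative class of that cocore.

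Next I would verify that $A|_V$ is trivial. The handlebody $V^{(j)}$ is a regular neighbourhood of a tripod with one component of an $(n+1)$-dimensional Hopf link attached at each of its three legs, so $\pi_1(V^{(j)})$ is free on the cores $z_\ell$ of its $1$-handles (two of them in Type I, one in Type II). Each such core is, up to homotopy in $V^{(j)}$, a single component of a Hopf link placed inside a small ball in $X$ during the graph surgery of \S\ref{s:sformula}; such a component is unknotted and bounds a disc in $X$, hence is null-homotopic in $X$. Therefore the holonomy $\pi_1(V^{(j)})\to\pi_1(X)\to\mathrm{End}_\C(A)$ is trivial, so $A|_V$ is the constant system with fibre $A$ and $C_*(V,\partial V;A)=C_*(V,\partial V;\C)\otimes_\C A$ as chain complexes. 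Because $\C$ is a field, this yields $H_{i+1}(V,\partial V;A)\cong H_{i+1}(V,\partial V;\C)\otimes_\C A$, which is the first displayed identity of the lemma.

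Finally I would compute $H_*(V,\partial V;\C)$ from the handle structure recorded in \S\ref{s:sformula}. The pair $(V,\partial V)$ carries the handle decomposition dual to that of $V$, with handles in dimensions $n{+}1,n,n,2$ in Type I and $n{+}1,n,2,2$ in Type II; since the handles of $V$ are attached in the standard way, $V$ has the homotopy type of a wedge of spheres ($S^1\vee S^1\vee S^{n-1}$, resp. $S^1\vee S^{n-1}\vee S^{n-1}$), so by Lefschetz duality $H_{i+1}(V,\partial V;\C)\cong H^{n-i}(V;\C)$ is free and all cellular differentials vanish. Its generators are represented by the cocores of the handles of positive index, whose boundary cycles are by definition the dual spheres $a_1,a_2,a_3$ (placed in degrees $\dim a_\ell$), together with the fundamental class of $\partial V$ in degree $n$ coming from the $0$-handle of $V$. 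Re-indexing then gives $H_{i+1}(V,\partial V;\C)=\bigl[\C[a_1]\oplus\C[a_2]\oplus\C[a_3]\oplus\C[\partial V]\bigr]_i$, and combining with the previous steps proves the lemma.

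I expect the genuine content to lie in the middle step: one must know that the restricted local system is \emph{actually trivial} — not merely that its holonomies lie in the centraliser of $\rho_A$ — and this rests squarely on the geometric feature of the graph-surgery construction that the $1$-handle cores of each Y-shaped handlebody are local, unknotted Hopf-link components bounding discs in the ambient $X$. The first step is purely homological, and the last is standard handle/Lefschetz-duality bookkeeping, so neither should present any difficulty.
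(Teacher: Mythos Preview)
Your argument is correct and follows exactly the paper's route: long exact sequence of $(X,X\setminus V)$ with $H_*(X;A)=0$, excision to $(V,\partial V)$, then triviality of $A|_V$ to factor out the coefficient. The paper's proof is three lines and simply asserts $H_{i+1}(V,\partial V;A)=H_{i+1}(V,\partial V;\C)\otimes_\C A$ without justification; you have supplied the missing reason (the $1$-handle cores are Hopf-link components in small balls, hence null in $\pi_1(X)$) and the explicit identification of generators, both of which the paper leaves to the reader.
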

\begin{proof}
In the homology long exact sequence for the pair $(X,X-V)$, we have $H_*(X;A)=0$. Also, by excision, we have $H_{i+1}(X,X-V;A)=H_{i+1}(V,\partial V;A)=H_{i+1}(V,\partial V;\C)\otimes_\C A$. The result follows. 
\end{proof}

The following is obtained by the K\"unneth formula.
\begin{Lem}\label{lem:H(W)}
Let $W=V\times (V-\mathring{V}[3])$, where $\mathring{V}=\mathrm{Int}\,V$. Then we have the following for $i>0$, as $\pi\times \pi$-modules:
\[ \begin{split}
	&H_i(W;A^{\boxtimes 2})=\Bigl[ H_*(V)\otimes_\C (\C[a_1]\oplus \C[a_2]\oplus \C[a_3]\oplus \C[\partial V])\Bigr]_i\otimes_\C A^{\boxtimes 2}, \\
	&H_i(\partial V\times(X-\mathring{V}[3]);A^{\boxtimes 2})\\
  &\hspace{19mm}=\Bigl[H_*(\partial V)\otimes_\C (\C[a_1]\oplus \C[a_2]\oplus \C[a_3]\oplus \C[\partial V])\Bigr]_i\otimes_\C A^{\boxtimes 2}.\\
\end{split}\]
\end{Lem}

\begin{Lem}\label{lem:H,d}
The $\pi\times \pi$-module $H_{n+2}(W,\partial W;A^{\boxtimes 2})$ is generated by the following elements over the algebra $A^{\boxtimes 2}$.
\[ \begin{split}
  &[S(a_i)\times_{c_A} S_3(b_\ell)]\quad(\dim{a_i}+\dim{b_\ell}=n),\quad [V\times_{c_A} \gamma[3]].
\end{split}\]
Hence $\Tr\langle\cdot,\cdot\rangle$ gives a nondegenerate pairing
\[ H_n(W;A^{\boxtimes 2})\otimes_R H_{n+2}(W,\partial W;A^{\boxtimes 2})\to R.\]
\end{Lem}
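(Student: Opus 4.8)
The plan is to combine the two preceding K\"unneth computations with Poincar\'e--Lefschetz duality inside the manifold with corners $H=V\times(X\setminus\mathring V_3)$, and then to identify the resulting $A^{\otimes 2}$-basis with the explicit cycles $\Sigma(a_i)\times_{c_A}\Sigma_3(z_j)$ and $V\times_{c_A}\gamma_V[3]$. First I would record the relevant degree count: $\dim V=n+1$ and $\dim(X\setminus\mathring V_3)=n+1$, so $\dim H=2n+2$, and the cycles we want live in degree $n+2$ (middle dimension), while their intersection partners in $H_n(H;A^{\otimes 2})$ have complementary degree $n+n+2=2n+2$. Lefschetz duality for the manifold-with-corners pair $(H,\partial H)$ with the local system $A^{\otimes 2}$ gives $H_{n+2}(H,\partial H;A^{\otimes 2})\cong H^{n}(H;A^{\otimes 2})$; combined with the universal-coefficient/duality identification between $H^n$ and $H_n$ (valid because $R=\C$ or $\C[t^{\pm1}]$ and, in the twisted case, $A^{\otimes 2}$ carries the nondegenerate form $B\otimes B$ so that $\Tr\langle\cdot,\cdot\rangle$ is the geometric avatar of cup-product-then-evaluate), one gets that $\Tr\langle\cdot,\cdot\rangle$ is a perfect pairing $H_n(H;A^{\otimes 2})\otimes_R H_{n+2}(H,\partial H;A^{\otimes 2})\to R$. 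This automatically yields nondegeneracy once we know the stated cycles actually form an $A^{\otimes 2}$-basis of $H_{n+2}(H,\partial H;A^{\otimes 2})$.

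Next I would pin down $H_n(H;A^{\otimes 2})$ explicitly from the K\"unneth lemma just quoted: $H_*(H;A^{\otimes 2})=\bigl[H_*(V)\otimes_\C(\C[a_1]\oplus\C[a_2]\oplus\C[a_3]\oplus\C[\partial V])\bigr]_*\otimes_\C A^{\otimes 2}$, where I use $H_*(X\setminus V;A)\cong H_{*+1}(V,\partial V;\C)\otimes_\C A$ and that the class $[\partial V]$ sits in degree $n$ while $[a_i]$ sit in degree $n-1$ (Type I) or $1$ and $n-1$ (Type II) — in any case the four classes $a_1,a_2,a_3,\partial V$ together with the classes of $V$ (degree $0$) and of the handle cores $z_\ell$ (degrees $1$ or $n-1$, representing $H_*(V)$) give a finite free $A^{\otimes 2}$-basis of $H_*(H;A^{\otimes 2})$. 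In degree $n$ the basis of $H_n(H;A^{\otimes 2})$ is then $\{[\,*\times a_i\,]\}_{i}$ together with $\{[z_\ell\times z_m]\}$ in the appropriate degrees — i.e. precisely the classes dual (under the geometric intersection pairing) to $[\Sigma(a_i)\times_{c_A}\Sigma_3(z_j)]$ and $[V\times_{c_A}\gamma_V[3]]$. The identification of the duals is the concrete linear-algebra step: $\Sigma(a_i)$ meets the core $z_i$ of the dual handle once and is disjoint from $z_j$ ($j\neq i$) and from $\partial V$; $\Sigma_3(z_\ell)$ meets $a_m$ according to the linking pattern; and $\gamma_V[3]$ is the $A$-chain bounding the base point $p_V$, pairing with $[V]$. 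So one reads off a triangular (indeed essentially diagonal, up to the linking-number matrix $\ell k_A(z_i,a_\ell^+)$, which is invertible in $A^{\otimes 2}$ by the Borromean-ring linking structure) change-of-basis matrix between the cycles in the statement and the Lefschetz-duals of the basis of $H_n(H;A^{\otimes 2})$.

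Concretely the key steps, in order, are: (i) compute $\dim H$ and confirm the complementary degrees $n$ and $n+2$; (ii) invoke Lefschetz duality with coefficients in $A^{\otimes 2}$ for the manifold with corners $(H,\partial H)$ — citing \cite[Appendix]{BT} for the corner structure — to get an abstract perfect pairing $H_n(H;A^{\otimes 2})\otimes_R H_{n+2}(H,\partial H;A^{\otimes 2})\to R$, checking that the cap/evaluation agrees with $\Tr\langle\cdot,\cdot\rangle$ via the identification $\Tr((x_1\otimes y_1)\otimes(x_2\otimes y_2))=B(x_1,x_2^*)B(y_1,y_2^*)$; (iii) list an explicit $A^{\otimes 2}$-basis of $H_n(H;A^{\otimes 2})$ from the K\"unneth lemma, namely the products of handle-core classes of $V$ with the classes $a_i,\partial V$; (iv) show $\{\Sigma(a_i)\times_{c_A}\Sigma_3(z_j)\}\cup\{V\times_{c_A}\gamma_V[3]\}$ are well-defined relative cycles in $H_{n+2}(H,\partial H;A^{\otimes 2})$ (boundaries land in $\partial H$ because $\partial\Sigma(a_i)=a_i\subset\partial V$, $\partial\Sigma_3(z_j)=z_j$, $\partial\gamma_V=p_V$ after the $[3]$-push-off, and one uses $\partial_A(\alpha\times_{c_A}\beta)=(\partial_A\alpha)\times_{c_A}\beta\pm\alpha\times_{c_A}(\partial_A\beta)$); and (v) compute the matrix of $\Tr\langle\cdot,\cdot\rangle$ between the basis of (iii) and the set of (iv) and verify it is invertible over $R$ — this matrix is block-diagonal with one block the identity (from $\Sigma(a_i)$ meeting $z_i$ once and $\gamma_V[3]$ pairing with $[V]$) and the other block the linking matrix $(\ell k_A(z_i,a_\ell^+))$, which is a unipotent/invertible matrix over $A^{\otimes 2}$. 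The main obstacle I anticipate is step (ii) and (v) done simultaneously with genuine care: making precise that Lefschetz duality holds with a Lie-algebra local system over the ring $\C[t^{\pm1}]$ (one needs $A^{\otimes 2}$ to be a finitely generated free $R$-module with a nondegenerate $R$-bilinear $\Tr$, which is exactly the content of the $\Tr$-definitions in \S\ref{s:bundle}), and then verifying that no unexpected off-diagonal intersection numbers appear — in particular that $\Sigma(a_i)$ really can be taken disjoint from the "wrong" handles and from $\gamma_V$, which is where the geometric normalization of the Seifert-type surfaces $\Sigma(a_i),\Sigma_3(z_\ell)$ chosen earlier in the section gets used. Once the intersection matrix is shown invertible, both the spanning statement and the nondegeneracy of the pairing follow at once.
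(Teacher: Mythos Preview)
Your approach via Poincar\'e--Lefschetz duality is valid and takes a genuinely different route from the paper. The paper does not invoke duality at all: it computes $H_{n+2}(H,\partial H;A^{\otimes 2})$ directly from the long exact sequence of the pair $(H,\partial H)$, first checking that the one generator of $H_{n+2}(H;A^{\otimes 2})$ (namely $[z_3\times\partial V_3]$ in Type~I) dies in the relative group, so that $H_{n+2}(H,\partial H;A^{\otimes 2})\cong\mathrm{Ker}\bigl[H_{n+1}(\partial H)\to H_{n+1}(H)\bigr]$; it then decomposes $\partial H=(V\times\partial V_3)\cup_{\partial V\times\partial V_3}(\partial V\times(X\setminus\mathring V_3))$ and runs Mayer--Vietoris to exhibit that kernel explicitly as the span of the classes $\partial_A(\Sigma(a_i)\times_{c_A}\Sigma_3(z_\ell))$ and $\partial_A(V\times_{c_A}\gamma_V[3])$. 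Your argument trades the Mayer--Vietoris computation for an appeal to Lefschetz duality with local coefficients, which is conceptually cleaner and shorter; the paper's route has the virtue that nondegeneracy of the pairing emerges from having explicit bases of matching rank on both sides rather than from a duality statement that, in the twisted $\C[t^{\pm1}]$-coefficient setting over a manifold with corners, requires a moment's care to justify.

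Two small corrections to your outline that do not affect the strategy. First, the degree-$n$ basis of $H_n(H;A^{\otimes 2})$ consists of the products $z_i\times_{c_A}a_\ell[3]$ (with $\dim z_i+\dim a_\ell=n$) together with $p_V\times_{c_A}\partial V_3$, not ``$z_\ell\times z_m$'': the second factor $X\setminus\mathring V_3$ carries the classes $a_\ell$ and $\partial V$, not $z_m$. Second, the intersection matrix you compute in step~(v) is simply the identity, since $\Sigma(a_i)$ is the cocore of the $i$-th handle and meets $z_k$ in $\delta_{ik}$ points, and $\Sigma_3(z_j)$ meets $a_\ell[3]$ in $\delta_{j\ell}$ points; the linking matrix $\ell k_A(z_i,a_\ell^+)$ is not the change-of-basis matrix here but rather the coefficient array that appears one step later, in the expansion of the propagator itself (Proposition~\ref{prop:normalization1}).
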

\begin{proof}
The proof of this lemma is parallel to that of \cite[Lemma~11.5]{Les2}.
We only prove the lemma for Type I since the proof for the case of Type II is similar. We use the homology long exact sequence for the pair $(W,\partial W)$. We know that 
\[ H_{n+2}(W;A^{\boxtimes 2})=\left\{\begin{array}{ll}
\C[b_3\times \partial V[3]]\otimes_\C A^{\boxtimes 2} & (n+1=4),\\
0 & (n+1\geq 5),
\end{array}\right. \]
whose image in $H_{n+2}(W,\partial W;A^{\boxtimes 2})$ is 0. Hence we have
\[ H_{n+2}(W,\partial W;A^{\boxtimes 2})\cong \mathrm{Ker}\,\Bigl[ H_{n+1}(\partial W;A^{\boxtimes 2})\to H_{n+1}(W;A^{\boxtimes 2}) \Bigr]. \]

To determine $H_{n+1}(\partial W;A^{\boxtimes 2})$, we apply the Mayer--Vietoris exact sequence for $\partial W=(V\times \partial V[3])\underset{\partial V\times \partial V[3]}{\cup}(\partial V\times (X-\mathring{V}[3]))$:
\[\begin{split}
	&\to H_{n+1}(\partial V\times \partial V[3])\stackrel{i_{n+1}}{\to} H_{n+1}(V\times \partial V[3])\oplus H_{n+1}(\partial V\times (X-\mathring{V}[3]))\to H_{n+1}(\partial W)\\
	&\stackrel{\partial_{\mathrm{MV}}}{\to} H_n(\partial V\times \partial V[3])\stackrel{i_n}{\to} H_n(V\times \partial V[3])\oplus H_n(\partial V\times (X-\mathring{V}[3]))
\end{split} \]
(coefficients are in $A^{\boxtimes 2}$). $\mathrm{Coker}\,i_{n+1}$ is isomorphic to
\[ H_{n+1}(W;A^{\boxtimes 2})=\Bigl[\C\{[*],[b_1],[b_2],[b_3]\}\otimes \C\{[a_1],[a_2],[a_3],[\partial V]\}[3]\Bigr]_{n+1}\otimes_\C A^{\boxtimes 2}. \]
$\mathrm{Ker}\,i_n$ is isomorphic to 
\[ \Bigl[\C\{[a_1],[a_2],[a_3],[\partial V]\}\otimes \C\{[*],[b_1],[b_2],[b_3]\} \Bigr]_n\otimes_\C A^{\boxtimes 2}, \]
which is generated over $A^{\boxtimes 2}$ by the images of 
$A_{i\ell}=\partial_A (S(a_i)\times_{c_A} S_3(b_\ell))$ ($\dim{a_i}+\dim{b_\ell}=n$) and $A_{00}=\partial_A (V\times_{c_A} \gamma[3])$
under the Mayer--Vietoris boundary map $\partial_{\mathrm{MV}}$. Thus we have
\[ H_{n+1}(\partial W;A^{\boxtimes 2})=H_{n+1}(W;A^{\boxtimes 2})\oplus (\C[A_{00}]\oplus\bigoplus_{i,\ell}\C[A_{i\ell}])\otimes_\C A^{\boxtimes 2}.
\]
Then the result follows. 
\end{proof}

\mysection{$\Sigma^3\times S^1$-bundles supported on $\Sigma^3\times I$}{s:support-I}

\begin{Prop}\label{prop:support-I}
Let $\Sigma^3=\Sigma(2,3,5)$. The image of the composition of the natural map
\[ \widetilde{i}_*\colon H_1(\widetilde{B\Diff}_{\mathrm{deg}}(\Sigma^3\times I,\partial))\to H_1(\widetilde{B\Diff}_{\mathrm{deg}}((\Sigma^3\times S^1)^\bullet,\partial))\]
and 
$ Z_\Theta^\even\colon H_1(\widetilde{B\Diff}_{\mathrm{deg}}((\Sigma^3\times S^1)^\bullet,\partial))\to \calA_\Theta^\even(\mathfrak{g}^{\otimes 2}[t^{\pm 1}];\rho(\pi')\times \Z)$
is included in the (injective) image from $\calA_\Theta^\even(\mathfrak{g}^{\otimes 2};\rho(\pi'))$.
\end{Prop}
\begin{proof}
Let $J=S^1-\mathrm{Int}\,I$ and $B=S^1$. Let $\pi\colon E\to B$ be a framed $\Sigma^3\times S^1$-bundle that has support in $\Sigma^3\times I$. Namely, we assume that $E$ can be obtained by gluing the trivial framed $(\Sigma^3\times J,\partial)$-bundle $B\times(\Sigma^3\times J)\to B$ and some framed $(\Sigma^3\times I,\partial)$-bundle $\pi_I\colon E_I\to B$ together along the boundaries. Moreover, we assume that a fiberwise degree 1 map $E\to \Sigma^3\times S^1$ is given so that its restriction to $B\times(\Sigma^3\times J)$ agrees with the projection $B\times(\Sigma^3\times J)\to \Sigma^3\times J$. The class of such a framed bundle generates the image of $\widetilde{i}_*$.

The product $\omega_{\Sigma^3}\times J$ is a propagator in $\bConf_2(\Sigma^3\times J)=B\ell((\Sigma^3\times J)^{\times 2},\Delta_{\Sigma^3\times J})$, where $\omega_{\Sigma^3}$ is the $(n+1)$-chain of $\bConf_2(\Sigma^3)$ considered in the proof of Proposition~\ref{prop:O=0}. By extending $B\times(\omega_{\Sigma^3}\times \partial J)$ by $s_\tau(E)$ for the vertical framing $\tau$ on $E$, we obtain a cycle in $\partial E\bConf_2(\pi_I)=B\times \partial \bConf_2(\Sigma^3\times I)$. By Lemmas~\ref{lem:H(Conf(W))} and \ref{lem:EConf(pi-I)} below, there is no homological obstruction to extending this cycle to a propagator in family $E\bConf_2(\pi_I)$. The sum of this extension and $B\times(\omega_{\Sigma^3}\times J)$ gives a propagator $\widetilde{\omega}_I$ in family $E\bConf_2(\pi)$. 

We take $\widetilde{\omega}_I$ as above and its perturbed copies $\widetilde{\omega}_I'$ and $\widetilde{\omega}_I''$ which are parallel near the boundary and each of which has similar product structure as $\widetilde{\omega}_I$ on $\Sigma^3\times J$. We consider well-definedness of the triple intersection $\langle \widetilde{\omega}_I,\widetilde{\omega}_I',\widetilde{\omega}_I''\rangle_\Theta$ in $E\bConf_2(\pi)$, which is expected to give $6Z_\Theta^\even$ for $\pi$. Here, the chains $\widetilde{\omega}_I,\widetilde{\omega}_I',\widetilde{\omega}_I''$ are not piecewise strata transversal unless they are perturbed slightly further on $\Sigma^3\times J$. 
Namely, if we merely took fiberwise copies of $\widetilde{\omega}_I$ from perturbed copies $\omega_{\Sigma^3}',\omega_{\Sigma^3}''$ of $\omega_{\Sigma^3}$ in $\bConf_2(\Sigma^3)$ by the same product structure on $\Sigma^3\times J$, then they would have finitely many triple intersection points in each level $\Sigma^3\times \{z\}$, which results in a 1-dimensional locus of triple points. 

We perturb the product structure of $\Sigma^3\times J$ to define $\widetilde{\omega}_I'$, as follows. First, we take the triple intersection of the three propagators defined as above by the same product structure of $\Sigma^3\times J$. Then in each level $\Sigma^3\times\{z\}$, $z\in J$, there may be finitely many configurations of two points corresponding to the triple intersection of the propagators in $\bConf_2(\Sigma^3)$, and the triple intersection in $\bConf_2(\Sigma^3\times J)$ is a product of a finite set $S\subset\bConf_2(\Sigma^3)$ and $J$. We now see that we may assume after small perturbations that such configurations in $S$ are mutually disjoint in each level $\Sigma^3\times\{z\}$. Let $U_\delta^+$ be the union of the (disjoint) balls of small radius $\delta$ around the points in the image of the first projection of $S$ in $\Sigma^3$, and let $U_\delta^-$ be defined similarly for the second projection of $S$ in $\Sigma^3$. By assumption, we have $U_\delta^+\cap U_\delta^-=\emptyset$. Then we perturb the level set $\Sigma^3\times\{z\}$ in $\Sigma^3\times S^1$ into the following embedding $\Sigma^3\times\{z\}\to \Sigma^3\times S^1$:
\[ (x,z)\mapsto \left\{\begin{array}{ll}
(x,z) & (x\notin U_\delta^+\cup U_\delta^-)\\
(x,z+\lambda(x)) & (x\in U_\delta^+)\\
(x,z-\lambda(x)) & (x\in U_\delta^-)
\end{array}\right. \]
where $\lambda\colon \Sigma^3\to [0,\ve]$ (for $\ve>0$ small) is a bump function supported on $U_\delta^+\cup U_\delta^-$ that takes the value $\ve$ at the center of each ball. Let $\Sigma_\lambda^3$ be the image of the perturbed embedding $\Sigma^3\times\{z_0\}\to \Sigma^3\times S^1$ for a fixed $z_0\in J$ (see Figure~\ref{fig:U-delta}). After doing the same perturbation of $\Sigma^3\times\{z\}$ for all $z\in J$, we obtain a slightly perturbed cylinder $\Sigma^3_\lambda\times J$ in $\Sigma^3\times S^1$. We define $\widetilde{\omega}'_I$ by the product of $\omega_{\Sigma^3}'$ in $\bConf_2(\Sigma_\lambda^3)$ with $J$ by using the product structure of this perturbed cylinder $\Sigma^3_\lambda\times J$. 
\begin{figure}
\[ \includegraphics[height=20mm]{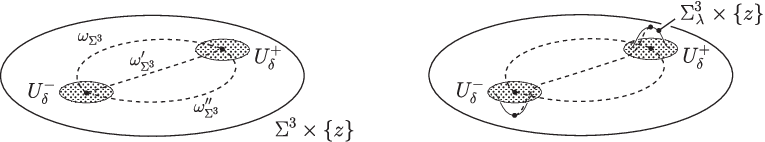} \]
\caption{A perturbation of $\Sigma^3\times J$ to define $\widetilde{\omega}_I'$.}\label{fig:U-delta}
\end{figure}

We claim that there is no triple intersection in $\bConf_2(\Sigma^3\times J)$ among the propagators thus obtained. Indeed, since the perturbation in the previous paragraph does not change the projection $\Sigma^3\times J\to \Sigma^3$, the triple intersection in $\bConf_2(\Sigma^3\times J)$ of the perturbed propagators must be in the preimage of the triple intersection in $\bConf_2(\Sigma^3)$ again, since the locus of the triple intersection is projected to that in $\bConf_2(\Sigma^3)$. By the choices of $\widetilde{\omega}_I$ and $\widetilde{\omega}_I'$ on $\bConf_2(\Sigma^3\times J)$, we see that the triple intersection in $\bConf_2(\Sigma^3\times J)$ is empty. Namely, a two-point configuration $(x_1,x_2)$ in the triple intersection in $\bConf_2(\Sigma^3\times J)$ have to satisfy the following conditions:
\begin{itemize}
\item $(x_1,x_2)$ lies in the preimage of $U_\delta^+\times U_\delta^-$ by the projection.
\item Both $x_1$ and $x_2$ lie in the same level set $\Sigma^3\times \{z\}$ for some $z\in J$.
\item Both $x_1$ and $x_2$ lie in the same level set $\Sigma^3_\lambda\times \{z'\}$ for some $z'\in J$.
\end{itemize}
However, these conditions cannot be satisfied simultanously. 

Since each configuration in the image of $\widetilde{\omega}_I$ is such that two points are both in $\Sigma^3\times I$ or both in $\Sigma^3\times J$, the triple intersection in question is the sum of those on $\Sigma^3\times I$ and $\Sigma^3\times J$. Namely, we do not need to consider configurations of two points such that one point is on $\Sigma^3\times I$ and the other point is on $\Sigma^3\times J$. 

On $\Sigma^3\times J$, namely, on $B\times \bConf_2(\Sigma^3\times J)$, we have seen above that the triple intersection is empty. 
On $\Sigma^3\times I$, namely, on $E\bConf_2(\pi_I)$, we may assume that the $\Z$-component of the holonomies in $\pi'\times \Z$ is trivial, we may take the triple intersection so that no term has power of $t$ in the coefficient. Hence its class in $\calA_\Theta^\even(\mathfrak{g}^{\otimes 2}[t^{\pm 1}];\rho(\pi')\times \Z)$ is included in $\calA_\Theta^\even(\mathfrak{g}^{\otimes 2};\rho(\pi'))$. This completes the proof.
\end{proof}

\begin{Rem}
It follows from Proposition~\ref{prop:support-I} that the invariant
\[ Z_\Theta^\even\circ\widetilde{i}_*\colon 
H_1(\widetilde{B\Diff}_{\mathrm{deg}}(\Sigma^3\times I,\partial))\to 
\calA_\Theta^\even(\mathfrak{g}^{\otimes 2};\rho(\pi')) \]
is defined. Unfortunately, a simple calculation of the character of the $\pi'\times \pi'$-module $\tbigwedge^3(\mathfrak{g}\boxtimes \mathfrak{g})$ shows that the target space $\calA_\Theta^\even(\mathfrak{g}^{\otimes 2};\rho(\pi'))$ is zero. Nevertheless, this can be upgraded by replacing $\mathfrak{g}\boxtimes \mathfrak{g}$ with $\bigoplus_i (V_i\boxtimes V_i)$ for some irreducible $\pi'$-modules $V_i$ (with $H^1(\pi';V_i)=0$). Then the $\calA_\Theta^\even$-space becomes a nonzero vector space. We will write about it in a subsequent paper. 
\end{Rem}

\begin{Lem}\label{lem:H(Conf(W))}
For $A=\mathfrak{g}[t^{\pm 1}]$ and $R=\C[t^{\pm 1}]$, we have
\[ H_i(\bConf_2(\Sigma^3\times I);A^{\boxtimes 2})
\cong\left\{\begin{array}{ll}
R[ST(*)\otimes c_A] & (i=3),\\
R[ST(\Sigma^3)\otimes c_A] & (i=6),\\
0 & (\mbox{otherwise}).
\end{array}\right. \]
\end{Lem}
\begin{proof}
The proof is parallel to that of Lemma~\ref{lem:H(Conf)}. Let $W=\Sigma^3\times I$. 
We consider the exact sequence
\[\begin{split}
 &\,H_{i+1}(W^{\times 2};A^{\boxtimes 2})\to H_{i+1}(W^{\times 2},\Conf_2(W);A^{\boxtimes 2})\to H_i(\Conf_2(W);A^{\boxtimes 2})\\
\to &\,H_i(W^{\times 2};A^{\boxtimes 2}),
\end{split}\]
where we have $H_*(W^{\times 2};A^{\boxtimes 2})=H_*(W;A)^{\otimes 2}=(H_*(\Sigma^3;\mathfrak{g})\otimes_\C \C[t^{\pm 1}])^{\otimes 2}=0$ by (\ref{eq:acyclic}) and Lemma~\ref{lem:tensor-acyclic}. 
Letting $N(\Delta_W)$ be a closed tubular neighborhood of $\Delta_W$, we have
$H_{i+1}(W^{\times 2},\Conf_2(W);A^{\boxtimes 2})\cong H_{i+1}(N(\Delta_W),\partial N(\Delta_W);A^{\otimes 2})$
by excision. By Assumption~\ref{assum:delta-trivial}, we have
\[\begin{split}
& H_{i+1}(N(\Delta_W),\partial N(\Delta_W);A^{\otimes 2})
=H_4(D^4,\partial D^4;R)\otimes_R H_{i-3}(\Delta_W;A^{\otimes 2})\\
&\cong H_4(D^4,\partial D^4;R)\otimes_R H_{i-3}(W;R)\cong H_{i-3}(X;R).
\end{split} \]
Here, $H_{i-3}(W;R)\cong H_{i-3}(\Sigma^3;R)$ is rank 1 for $i-3=0,3$, and its generator is $*,\Sigma^3$, respectively.
\end{proof}

\begin{Lem}\label{lem:EConf(pi-I)}
We have
$H_5(E\bConf_2(\pi_I);A^{\boxtimes 2})=0$ and 
the natural map
\[ H_6(E\bConf_2(\pi_I);A^{\boxtimes 2})
\to H_6(E\bConf_2(\pi_I),\partial E\bConf_2(\pi_I);A^{\boxtimes 2}) \]
is zero.
\end{Lem}
\begin{proof}
The proof is analogous to that of Lemma~\ref{lem:E2-EConf}. Namely, the lemma follows from Lemma~\ref{lem:H(Conf(W))} and the Leray--Serre spectral sequence for $\pi_I$.
\end{proof}

There is a fibration sequence (\cite{Ce1,Pa})
\[ \Diff_0(\Sigma^3\times I,\partial)\stackrel{i}{\longrightarrow} 
\Diff_0(\Sigma^3\times S^1)\longrightarrow
\Emb_0(\Sigma^3,\Sigma^3\times S^1), \]
where the second map is given by the action to the inclusion $\Sigma^3=\Sigma^3\times \{\mathrm{pt}\}\subset \Sigma^3\times S^1$.
\begin{Thm}\label{thm:infinite-rank-restate}
Let $\Sigma^3=\Sigma(2,3,5)$.
The quotient set 
\[ \pi_0\Diff_0(\Sigma^3\times S^1)/i_*\pi_0\Diff_0(\Sigma^3\times I,\partial) \]
is countable infinite. Thus the set $\pi_0\Emb_0(\Sigma^3,\Sigma^3\times S^1)$ is countable infinite. 
\end{Thm}
\begin{proof}
We have the following commutative diagram.
\[ \xymatrix{
  \pi_1 \widetilde{B\Diff}_{\mathrm{deg}}(\Sigma^3\times I,\partial)\ar[r]^-{\widetilde{i}_*} \ar[d] \ar[rd]^-{j_*}& \pi_1 \widetilde{B\Diff}_{\mathrm{deg}}((\Sigma^3\times S^1)^\bullet,\partial) \ar[d]^-{k_*}\\
  \pi_1 B\Diff_0(\Sigma^3\times I,\partial)\ar[r]^-{i_*} & \pi_1 B\Diff_0(\Sigma^3\times S^1) 
} \]
The vertical maps are both surjective, since they are the maps induced from projections of fibrations and the diffeomorphisms in $\Diff_0(\Sigma^3\times I,\partial)$ and $\Diff_0(\Sigma^3\times S^1)$ preserves the path-components of the fibers (\S\ref{ss:moduli-sp}): they acts trivially on $\pi_0$ of the spaces of maps since the diffeomorphisms do not change the homotopy classes of maps. 

It follows from Corollary~\ref{cor:abel-infinite} and Proposition~\ref{prop:puncture} that the set $\Phi$ of the infinitely many nontrivial elements of $\pi_1 \widetilde{B\Diff}_{\mathrm{deg}}((\Sigma^3\times S^1)^\bullet,\partial)$ detected in Theorem~\ref{thm:surgery}, Corollary~\ref{cor:image-Z} is mapped to a subset of infinitely many nontrivial elements of $\pi_1 B\Diff_0(\Sigma^3\times S^1)$. Moreover, it turns out that, in the abelianization, $\Phi$ is independent of the image of $\widetilde{i}_*$ from $\pi_1 \widetilde{B\Diff}_{\mathrm{deg}}(\Sigma^3\times I,\partial)$ by Propositions~\ref{prop:support-I}, \ref{prop:A-incl}. Let 
$k_*'\colon \pi_1 \widetilde{B\Diff}_{\mathrm{deg}}((\Sigma^3\times S^1)^\bullet,\partial)/\mathrm{Im}\,\widetilde{i}_*\to \pi_1 B\Diff_0(\Sigma^3\times S^1)$ be the homomorphism induced by $k_*$. By the commutativity of the diagram above, it follows that
\[ \begin{split}
  &\pi_1 B\Diff_0(\Sigma^3\times S^1)/\mathrm{Im}\,i_*
  =\pi_1 B\Diff_0(\Sigma^3\times S^1)/\mathrm{Im}\,j_*\\
  &=k_*'\bigl(\pi_1 \widetilde{B\Diff}_{\mathrm{deg}}((\Sigma^3\times S^1)^\bullet,\partial)/\mathrm{Im}\,\widetilde{i}_* \bigr)
\end{split}\]
is an infinite set, since $\mathrm{Ker}\,k_*$ is a finite abelian group, by the identites below (see also (\ref{eq:pi-F})), and so is $\mathrm{Ker}\,k_*'=\mathrm{Ker}\,k_*/\widetilde{i}_*\mathrm{Ker}\,(k_*\circ \widetilde{i}_*)$.
\[ \pi_1(\calF_*(X)\times \Map_*^{\mathrm{\deg}}(X,X))=(\Z_2)^{\oplus 6}, \quad \pi_1(\Omega(SO_4\times X))=0 \]
(for $X=\Sigma^3\times S^1$).
\end{proof}

\mysection{More properties of $\Theta$-graph surgery}{s:pseudo-isotopy}

The following propositions hold for any compact parallelizable manifold $X$.

\begin{Prop}[{\cite[Proposition~4.2]{BW}}]\label{prop:bw-hopf}
Let $d=\dim{X}\geq 4$. The $(X,\partial)$-bundle $\pi^\Theta\colon E^\Theta\to B_\Theta$ for an embedding $\phi\colon \Theta\to X$ is obtained from the product bundle $B_\Theta\times X\to B_\Theta$ by fiberwise surgeries along a $B_\Theta$-family of framed links $h_s\colon S^1\cup S^{d-2}\to X$, $x\in B_\Theta$, which satisfies the following conditions:
\begin{enumerate}
\item[(a)] $h_s$ is isotopic to the Hopf link with standard framing in a small $d$-ball for each $s$.
\item[(b)] The restriction of $h_s$ to $S^{d-2}$ is a constant $B_\Theta$-family.
\item[(c)] There is a small neighborhood $N$ of $\mathrm{Im}\,\phi$ such that the image of $h_s$ is included in $N$ for all $s\in B_\Theta$.
\end{enumerate}
\end{Prop}
\begin{Rem}\label{rem:bw-hopf}
It is straightforward from the proof of \cite[Proposition~4.2]{BW} that the condition (b) of Proposition~\ref{prop:bw-hopf} can be replaced with that for the restriction of $h_s$ to $S^1$.
\end{Rem}

\begin{Rem}\label{rem:bw-hopf-2}
\begin{enumerate}
\item By attaching a $B_\Theta$-family of 2-handles and digging a $B_\Theta$-family of $(d-1)$-handles along $h_s$ of Proposition~\ref{prop:bw-hopf} to/from the top face $B_\Theta\times (X\times \{1\})$ of $B_\Theta\times (X\times I)$, one obtains a fiber bundle $\Pi^\Theta\colon W^\Theta\to B_\Theta=S^0\times S^{d-3}$ with fiber $W=X\times I$ and structure group $C(X)$ (see Figure~\ref{fig:cancelling-handles}). This was suggested by Peter Teichner and the detail for general trivalent graphs and fiber dimensions $d\geq 4$ can be found in \cite{BW}.
\begin{figure}%
\[ \includegraphics[height=35mm]{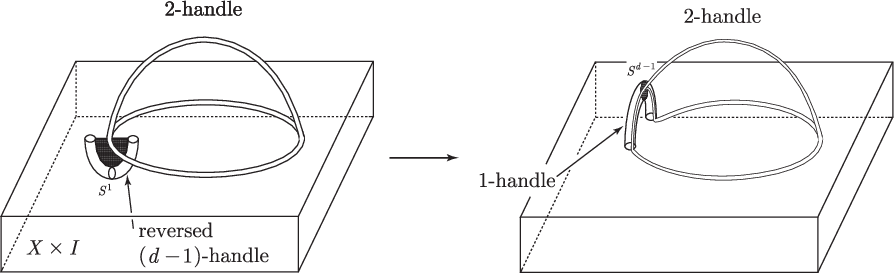} \]%
\caption{Replacing a Hopf link with a cancelling pair of handles.}\label{fig:cancelling-handles}%
\end{figure}%

\item Since digging a reversed $(d-2)$-handle from $X\times I$ gives diffeomorphic cobordism as attaching a 1-handle, the $X\times I$-bundle $\Pi^\Theta$ of the previous item admits a Cerf graphic (in the sense of \cite{Ce2}) as in Figure~\ref{fig:lens12} on each component of $B_\Theta$. Also, digging a reversed $1$-handle from $X\times I$ gives the same result as attaching a $(d-1)$-handle. Thus the corresponding $X\times I$-bundle $\Pi'^\Theta$ over $B_\Theta$ admits a Cerf graphic with a $(d-2,d-1)$-handle pair of critical values.
\begin{figure}
\begin{center}
\includegraphics{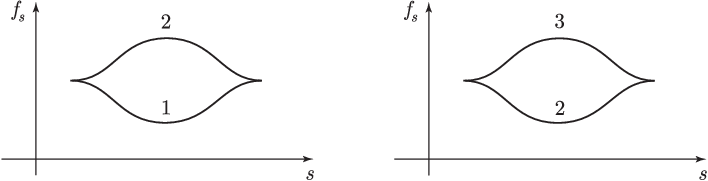}
\caption{The graphics for the $(X\times I)$-bundles with the Morse indices of the critical loci. Left: $\Pi^\Theta$. Right: $\Pi'^\Theta$.}\label{fig:lens12}
\end{center}
\end{figure}

\item When $d=4$, the $(2,3)$-handle pair giving $\Pi'^\Theta$ of the previous item can be presented by an isotopy $P_s$ ($0\leq s\leq 1$) of the attaching 2-sphere of the 3-handle in the middle level surface $X\#(S^2\times S^2)$. This isotopy can be explicitly described by applying \cite[Lemmas~3.7 and 3.10]{BW}, which is roughly as follows. The initial sphere $P_0$ has the geometrically dual sphere $Q=\mathrm{pt}\times S^2\subset X\#(S^2\times S^2)$, which is the belt-sphere of the 2-handle. During the isotopy, the intersection $P_s\cap Q$ changes as first introducing two pairs of intersections by two finger moves, and then cancelling the same pairs by two Whitney moves, reversing the two finger moves (\cite{Tei}).
\end{enumerate}
\end{Rem}

\begin{Prop}\label{prop:homotopically-trivial}
Let $d=\dim{X}\geq 4$. The element of $\Omega_{d-3}^{SO}(B\Diff(X))$ given by $\pi^\Theta\colon E^\Theta\to B_\Theta$ belongs to the image from $\Omega_{d-3}^{SO}(B\Diff_0(X))$.
\end{Prop}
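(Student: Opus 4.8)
The plan is to show that the bundle $\pi^\Gamma:E^\Gamma\to B_\Gamma$ is fiber homotopy trivial, i.e. that the classifying map $B_\Gamma\to B\Diff(X)$ lifts through $B\Diff_0(X)$. The natural approach is to use the description of $\pi^\Gamma$ by surgery along a family of framed links, together with Theorem~\ref{thm:pseudoisotopy}. Indeed, the proof of Theorem~\ref{thm:pseudoisotopy} shows that $\pi^\Gamma$ is the top boundary of a bundle of pseudo-isotopies $\widetilde{\pi}:\widetilde{E}\to B_\Gamma$ with fiber $X\times I$, such that the bottom boundary is the trivial $X$-bundle $X\times B_\Gamma$. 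A bundle of pseudo-isotopies gives, for each $b\in B_\Gamma$, a diffeomorphism of $X$ that is the restriction to the top face of a self-diffeomorphism of $X\times I$ fixing the bottom face; such a diffeomorphism is pseudo-isotopic to the identity, hence in particular homotopic to the identity. More precisely, the bundle structure provides a fiberwise diffeomorphism $\widetilde{E}\cong (X\times I)\times B_\Gamma$ over the bottom face, and the composite with the inclusion of the top face $X\times\{1\}\times B_\Gamma\hookrightarrow \widetilde{E}$ gives a fiberwise map $E^\Gamma\to X$ over $B_\Gamma$ covering $\pi^\Gamma$; restricted to each fiber this is a diffeomorphism homotopic to the identity, which is exactly the data of a lift of the classifying map to $B\Diff_0(X)$.

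Concretely, first I would recall that an element of $\Omega_{k(d-3)}(B\Diff(X))$ lies in the image of $\Omega_{k(d-3)}(B\Diff_0(X))$ if and only if the corresponding $X$-bundle admits a fiberwise map to $X$ that is a homotopy equivalence (indeed a degree-one map homotopic to the identity) on each fiber --- this is just the statement that $B\Diff_0(X)\to B\Diff(X)$ is, up to the relevant components, the map classifying such enriched bundles, precisely as in the discussion of $\widetilde{B\Diff}_{\mathrm{deg}}$ in \S\ref{ss:main-result} and \S\ref{ss:moduli-sp}. Then I would invoke the bundle of pseudo-isotopies $\widetilde{\pi}:\widetilde{E}\to B_\Gamma$ constructed in the proof of Theorem~\ref{thm:pseudoisotopy}, whose fiber is $X\times I$, whose restriction to the bottom face $X\times\{0\}$ is the trivial bundle $X\times B_\Gamma$, and whose restriction to the top face $X\times\{1\}$ is $\pi^\Gamma$. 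Using a fiberwise collar and the trivialization of the bottom face, one gets a fiberwise retraction $r:\widetilde{E}\to X\times B_\Gamma$; restricting $r$ to the top face yields $f:E^\Gamma\to X$ over $B_\Gamma$ which on each fiber $X_b$ is the diffeomorphism $\phi_b|_{X\times\{1\}}$ of the pseudo-isotopy $\phi_b$ of $X\times I$ (where $\phi_b$ is the composite of the bundle trivialization over $X\times\{0\}$ with the identification of $\widetilde E_b$ with $X\times I$). Finally, since a pseudo-isotopy restricts on the top face to a diffeomorphism smoothly isotopic-to-identity after stabilizing by $\times I$ --- in any case homotopic to the identity --- each $f|_{X_b}$ is homotopic to $\mathrm{id}_X$, so $f$ provides the required lift and the class lies in $\Omega_{k(d-3)}(B\Diff_0(X))$.

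The main obstacle, and the point that needs care, is the step asserting that the top-face restriction of a pseudo-isotopy is \emph{homotopic} (not merely pseudo-isotopic) to the identity, and that this homotopy can be made to vary continuously in the parameter $b\in B_\Gamma$. The homotopy-to-identity statement for a single pseudo-isotopy is classical: a diffeomorphism $g$ of $X$ that extends to a diffeomorphism $G$ of $X\times I$ equal to the identity on $X\times\{0\}$ is homotopic to $\mathrm{id}_X$ via the track of $G$ restricted to $X\times\{t\}$, $t\in[0,1]$, composed appropriately --- so in fact the homotopy is produced canonically from $G$ itself. Hence the parametrized version is automatic: the family $\{G_b\}$ supplied by the pseudo-isotopy bundle gives a continuous family of homotopies $h_{b,t}$ from $\mathrm{id}_X$ to $g_b$, which is precisely a fiberwise homotopy from the constant section to $f$. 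So the genuine content is just to make the identification ``$\widetilde E$ with fiber $X\times I$, trivialized on the bottom face'' precise and to observe that this is exactly the data of a continuous family of pseudo-isotopies; the homotopy-theoretic conclusion then follows formally.

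I would also remark that this proposition is in some sense a weaker and more elementary consequence of the line of reasoning in \S\ref{s:pseudo-isotopy}: Theorem~\ref{thm:pseudoisotopy} already records that the class comes from $\Omega_{k(d-3)}(B\calC(X))$, and Proposition~\ref{prop:homotopically-trivial} is the immediate corollary obtained by pushing that class forward along $B\calC(X)\to B\Diff_0(X)$ (the map sending a pseudo-isotopy to the restriction of its total diffeomorphism on the top face, which lands in $\Diff_0$ by the homotopy argument above) and then along $B\Diff_0(X)\to B\Diff(X)$. Thus the cleanest write-up is: the class lies in the image of $\Omega_{k(d-3)}(B\calC(X))$ by Theorem~\ref{thm:pseudoisotopy}; the natural map $\calC(X)\to\Diff_0(X)$ (restriction to the top face) is well-defined because a pseudo-isotopy restricts to a diffeomorphism homotopic to the identity; and the composite $\Omega_{k(d-3)}(B\calC(X))\to\Omega_{k(d-3)}(B\Diff_0(X))\to\Omega_{k(d-3)}(B\Diff(X))$ carries the class representing $\pi^\Gamma$ to $[\pi^\Gamma]$, completing the proof.
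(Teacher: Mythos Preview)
Your proposal is correct and follows essentially the same approach as the paper: invoke Theorem~\ref{thm:pseudoisotopy} to see that $[\pi^\Gamma]$ comes from $\Omega_{k(d-3)}(B\calC(X))$, then use that a pseudo-isotopy provides a homotopy from the top-face diffeomorphism to the identity, so the restriction map $\calC(X)\to\Diff(X)$ factors through $\Diff_0(X)$. The paper's proof is just the terse two-sentence version of your final paragraph.
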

\begin{proof}
The element of $\Omega_{d-3}^{SO}(B\Diff(X))$ given by $\pi^\Theta\colon E^\Theta\to B_\Theta$ is mapped from $\Omega_{d-3}^{SO}(BC(X))$ and the result follows since an element of $C(X)$ gives a continuous homotopy of a diffeomorphism of $X$ to the identity by composing with the projection $X\times I\to X$. Hence the image of the natural map $C(X)\to \Diff(X)$ is contained in $\Diff_0(X)$. 
Namely, the $X\times I$-bundle over $B_\Theta$ given by Theorem~\ref{thm:teichner} can be constructed inductively by attaching thickened cells $(X\times I)\times(\mbox{cell}\subset B_\Theta)$ by families of diffeomorphisms of $X\times I$ in $C(X)$. The induced families of diffeomorphisms of $X\times\{1\}$ are in $\Diff_0(X)$. 
\end{proof}

\begin{Prop}\label{prop:degree1}
Let $d=\dim{X}\geq 4$. Suppose $X$ is closed. Then the element of $\Omega_{d-3}^{SO}(B\Diff_0(X))$ given by 
$\Psi_1(\Theta(g_1,g_2,g_3))$ ($g_1,g_2,g_3\in\pi_1(X)$) 
belongs to the image of the projection induced map
\[ \Omega_{d-3}^{SO}(\widetilde{B\Diff}_{\mathrm{deg}}(X^\bullet,\partial))\to \Omega_{d-3}^{SO}(B\Diff_0(X^\bullet,\partial)).\]
\end{Prop}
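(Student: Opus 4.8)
The statement to prove is Proposition~\ref{prop:degree1}: the bundle $\Psi_k(\Gamma(g_1,\ldots,g_{3k}))$, a priori only an element of $\Omega_{(d-3)k}(B\Diff_0(X))$, lifts to $\Omega_{(d-3)k}(\widetilde{B\Diff}_{\mathrm{deg}}(X^\bullet,\partial))$, i.e. the $X$-bundle $\pi^\Gamma:E^\Gamma\to B_\Gamma$ can be upgraded with (1) a trivialization near a base-point section, (2) a vertical framing restricting to $\tau_0$ and compatible near the base point, and (3) a fiberwise pointed degree $1$ map $E^\Gamma\to X$. This is essentially Proposition~\ref{prop:v-framing} (restated), so the plan mirrors the corresponding argument in \cite[\S4]{Wa1}.

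\textbf{Plan of proof.} First I would set up the base-point data: the graph $\Gamma$ and the handlebodies $V^{(1)},\ldots,V^{(2k)}$ are embedded in the interior of $X^\bullet = X \setminus \mathring{U}_{x_0}$, so the surgery is performed away from a neighborhood of $x_0$. Hence $E^\Gamma$ contains a canonical trivial sub-bundle $U_{x_0}\times B_\Gamma$, giving data (1), and the structure group reduces to $\Diff_0(X^\bullet,\partial)$. Next, for the vertical framing (2): over the region $(X-\mathrm{Int}(V^{(1)}\cup\cdots\cup V^{(2k)}))\times B_\Gamma$ the bundle is a product, so it carries the constant framing $\tau_0$; the content is to extend this across the glued-in families $V^{(i)}\times B_\Gamma$ compatibly with the clutching diffeomorphisms $\alpha_i(t_i)$. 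Here I would invoke the key fact, proved in \cite[\S4.5, \S4.6]{Wa1} for the Borromean twist families $\alpha_{\mathrm{I}}$ and $\alpha_{\mathrm{II}}$, that these twists are \emph{framed} families — the Borromean string link $D^{n-1}\cup D^{n-1}\cup D^1\to D^{n+1}$ can be chosen so that the family of complements admits a framing agreeing with the standard one on the boundary — so the framing glues. The upshot is a vertical framing $\tau$ on $E^\Gamma$ restricting to $\tau_0$ on the base fiber and near $\widetilde{x}_0$.

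\textbf{Degree 1 map.} For data (3), I would construct $f:E^\Gamma\to X$ fiber by fiber: on the product region it is the projection to $X-\mathrm{Int}(\bigcup V^{(i)})$, and over each $V^{(i)}\times B_\Gamma$ one must supply a degree $1$ map of the family of handlebodies to the fixed $V^{(i)}$ that agrees with the identity on the boundary. Such a map exists because the surgery twists $\alpha_i(t_i)$ act trivially on homology (the Borromean twist is homologically trivial — this is exactly why graph surgery preserves the $X$-diffeomorphism type of each fiber up to the relevant notion), so each fiber $V^{(i)}(t)$ is related to $V^{(i)}$ by a map that is the identity on $\partial V^{(i)}$ and degree $1$; assembling over $B_\Gamma$ and matching with the projection on the complement gives a fiberwise pointed degree $1$ map, which I would make smooth by a standard relative approximation. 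Finally, since $B_\Gamma$ is a closed oriented manifold (a product of spheres $S^0$ and $S^{n-2}$, suitably oriented) and all the data (1)–(3) are supplied, $(\pi^\Gamma,$ trivialization$,\tau,f)$ represents a class in $\Omega_{(d-3)k}(\widetilde{B\Diff}_{\mathrm{deg}}(X^\bullet,\partial))$ mapping to $\Psi_k(\Gamma(g_1,\ldots,g_{3k}))$ under the forgetful projection, as claimed.

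\textbf{Main obstacle.} The routine parts are the base-point trivialization and the bookkeeping of orientations on $B_\Gamma$. The real work is the compatibility of the vertical framing with the clutching functions $\alpha_{\mathrm{I}},\alpha_{\mathrm{II}}$ — one needs that the Borromean twist family extends to a framed family of diffeomorphisms of $\partial V$ that is framed-isotopic to the constant family, or at least bounds an appropriate framed cobordism — and this is precisely what is established in \cite[\S4.5--4.6]{Wa1}; I would cite that directly rather than reprove it. A secondary subtlety is ensuring the degree $1$ map and the framing can be chosen \emph{simultaneously} compatibly near $\widetilde{x}_0$ and on the product region, but since both reduce to the standard data there and the surgery support is disjoint from $U_{x_0}$, no genuine conflict arises. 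Thus the proof is a matter of assembling local models, with \cite{Wa1} supplying the one nontrivial input.
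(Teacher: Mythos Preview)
Your treatment of the vertical framing is fine and matches the paper: both simply cite \cite{Wa1} (the paper cites Lemma~4.12 there). The base-point trivialization is likewise routine.

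For the fiberwise degree~1 map, however, your route diverges from the paper's and has a gap. You propose to define $f$ as the projection on the product region $W\times B_\Gamma$ and then extend over each $V^{(i)}\times B_\Gamma$ by a family of degree~1 self-maps of $V^{(i)}$ restricting to $\alpha_i(t_i)$ on the boundary. The existence of such an extension is the crux, and your justification---``the Borromean twist acts trivially on homology''---is not sufficient. Acting trivially on $H_*(\partial V^{(i)})$ does not by itself guarantee that $\alpha_i(t_i):\partial V^{(i)}\to\partial V^{(i)}$ extends to a continuous map $V^{(i)}\to V^{(i)}$; one needs either that the family $\alpha_i$ is nullhomotopic in $\Map(\partial V^{(i)},V^{(i)})$, or an obstruction-theoretic argument tailored to the specific homotopy type of $V^{(i)}$. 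This can be done, but you have not done it.

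The paper sidesteps this entirely by using the framed-link description developed in \S\ref{s:pseudo-isotopy} (Lemmas~\ref{lem:6link-I}, \ref{lem:6link-II}): the bundle $E^\Gamma$ is realized as the top of a fiberwise cobordism $\widetilde{E}$ over $B_\Gamma$ built from $(X\times I)\times B_\Gamma$ by attaching handles along a family of $12k$-component framed links. Because every component of this family is fiberwise nullhomotopic in $X$, the projection $X\times B_\Gamma\to X$ extends continuously over each handle, yielding a map $\widetilde{E}\to X$; its restriction to the top boundary $E^\Gamma$ is the desired fiberwise map, and degree~1 follows because the preimage of a generic point of $X\setminus(V^{(1)}\cup\cdots\cup V^{(2k)})$ is a single point in each fiber. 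This argument trades your extension problem on $V^{(i)}$ for a much easier one (extending over handles with nullhomotopic attaching spheres), at the cost of invoking the framed-link model.
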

\begin{proof}
The existence of vertical framing was shown in \cite[Corollary~3.22]{Wa2}. Thus we need only to show that a fiberwise homotopy equivalence $E^\Theta\to X$ exists. The rest is an analogue of \cite[Lemma~2.4]{GL}. Namely, each component of the family of the 2-component framed links of Proposition~\ref{prop:bw-hopf} that yields the $\Theta$-graph surgery (along $V_1\cup V_2$) is fiberwise null-homotopic in $B_\Theta\times N$ by Proposition~\ref{prop:bw-hopf} (b) and Remark~\ref{rem:bw-hopf}. By attaching handles along the family of $2$-component framed links in the top face $B_\Theta\times(X\times \{1\})$ of $B_\Theta\times(X\times I)$, we obtain a fiberwise cobordism $\widetilde{E}$ between $E^\Theta$ and $-(B_\Theta\times X)$. Since the families of attaching spheres of handles are fiberwise null-homotopic in $B_\Theta\times N$, the projection $B_\Theta\times (X\times I)\to X$ extends over the families of handles continuously and we obtain a continuous extension $\widetilde{f}\colon \widetilde{E}\to X$. We may assume moreover that $\widetilde{f}$ is smooth. That its restriction $f=\widetilde{f}|_{E^\Theta}$ is fiberwise homotopy equivalence follows since its restriction to the fiber over the basepoint of each component of $B_\Theta$ is homotopic to a map that collapses the part attached to a Hopf link in a ball into a point and thus is a homotopy equivalence. 
\end{proof}

\appendix

\section{Local models of products of singular chains}

\subsection{Intersection of two simplices}

\begin{Lem}\label{lem:d-int-sigma}
Let $\sigma_\lambda\colon \Delta^p\to R$, $\sigma_\mu\colon \Delta^q\to R$ be smooth simplices in a $r$-dimensional manifold $R$ such that $p+q=r+1$. Suppose that $\sigma_\lambda$ and $\sigma_\mu$ satisfy the conditions (a)--(d) in the proof of Lemma~\ref{lem:tr-coboundary} except that the ambient space is $R$. Then we have
\[  
\partial\langle \sigma_\lambda,\sigma_\mu\rangle
=(-1)^{r-q}\langle \partial \sigma_\lambda,\sigma_\mu\rangle 
+\langle \sigma_\lambda,\partial \sigma_\mu\rangle. 
\]
\end{Lem}
\begin{proof}
By the transversality of the intersection of the embeddings $\sigma_\lambda$ and $\sigma_\mu$, it suffices to compute the signs of the 0-dimensional intersection points in a standard local coordinate model. Namely, let
\[ \begin{split}
&P=\{(t,x,0)\in \R\times \R^{p-1}\times \R^{q-1}\mid t\leq 1\},\\
&Q=\{(t,0,y)\in \R\times \R^{p-1}\times \R^{q-1}\mid t\geq 0\},
\end{split} \]
which correspond to $\sigma_\lambda$ and $\sigma_\mu$, respectively. The intersection of the half-planes $P,Q$ is the closed interval $[0,1]$ in the $t$-axis. Let
$o=(0,0,0)$, $a=(1,0,0)\in\R\times\R^{p-1}\times \R^{q-1}=\R^r$. We put
\[ 
o(P)=\alpha\,\partial t\wedge\partial x,\quad
o(Q)=\beta\,\partial t\wedge \partial y,\quad
o(R)=\partial t\wedge\partial x\wedge\partial y,
\]
where $\alpha,\beta\in\{-1,1\}$, $\partial t=\frac{\partial}{\partial t}$, $\partial x=\frac{\partial}{\partial x_1}\wedge\cdots\wedge\frac{\partial}{\partial x_{p-1}}$, $\partial y=\frac{\partial}{\partial y_1}\wedge\cdots\wedge\frac{\partial}{\partial y_{q-1}}$. According to the outward-normal-first convention, we have
\[ o(\partial P)_a=\alpha\, \partial x,\quad 
o(\partial Q)_o=-\beta\, \partial y. \]
The coorientations of $P$ and $Q$ in $R$ are as follows:
\[ o^*_R(P)=\alpha\,\partial y,\quad
o^*_R(Q)=(-1)^{(p-1)(q-1)}\beta\,\partial x. \]
Then we have
\[ \begin{split}
&o^*_R(P\cap Q)=o^*_R(P)\wedge o^*_R(Q)=\alpha\beta\,\partial x\wedge \partial y,\\
&o(P\cap Q)=\alpha\beta\,\partial t,\\
&\partial\langle P, Q\rangle=\alpha\beta (a-o).
\end{split} \]
On the other hand, we have
\[ \begin{split}
&o^*_R(\partial P)_a=(-1)^{p-1}\alpha\,\partial t\wedge \partial y,\quad
o^*_R(\partial Q)_o=-(-1)^{p(q-1)}\beta\,\partial t\wedge \partial x,\\
&o^*_R(\partial P\cap Q)_a=o^*_R(\partial P)_a\wedge o^*_R(Q)_a=(-1)^{p-1}\alpha\beta\,\partial t\wedge\partial x\wedge \partial y,\\
&o^*_R(P\cap\partial Q)_o=o^*_R(P)_o\wedge o^*_R(\partial Q)_o=-\alpha\beta\, \partial t\wedge\partial x\wedge \partial y.
\end{split}\]
Hence
\[ \langle\partial P, Q\rangle=(-1)^{p-1}\alpha\beta\,a,\quad
\langle P,\partial Q\rangle=-\alpha\beta\,o, \quad\mbox{and}\]
\[ \partial\langle P, Q\rangle
=(-1)^{p-1}\langle\partial P, Q\rangle
+\langle P,\partial Q\rangle. \]
Since $p-1=r-q$, the result follows. 
\end{proof}

\subsection{Cross product of simplices with local coefficients}

Let $C_1=\sigma_\lambda\otimes m_\lambda$, $C_2=\sigma_\mu\otimes m_\mu$ be $A$-chains of $X$ such that $\sigma_\lambda,\sigma_\mu$ are smooth singular simplices of dimensions $p,q$ in $X$, respectively, and $m_\lambda,m_\mu\in A$. We now define the cross product $(\sigma_\lambda\otimes m_\lambda)\times (\sigma_\mu\otimes m_\mu)$. 

Let $\delta_{0,\lambda},\delta_{0,\mu}$ be the barycenters of $\Delta^p,\Delta^q$, respectively. The cross product $\sigma_\lambda\times \sigma_\mu$ can be subdivided into several $(p+q)$-simplices according to the simplicial decomposition:
\[ \Delta^p\times \Delta^q=\sum_\rho \Delta_\rho^{p+q}, \]
where $\rho$ is an ``edgepath'' in $\{(i,j)\mid 0\leq i\leq p,\,0\leq j\leq q\}$ from $(0,0)$ to $(p,q)$ (\cite[3.B (The Simplicial Cross Product)]{Hat2}). Let $\delta_{0,\rho}$ be the barycenter of $\Delta_\rho$. Let $\eta_\rho$ be a path in $\Delta^p\times \Delta^q$ from $\delta_{0,\rho}$ to $(\delta_{0,\lambda},\delta_{0,\mu})$. Then we define
\begin{equation}\label{eq:cross-sigma}
 (\sigma_\lambda\otimes m_\lambda)\times (\sigma_\mu\otimes m_\mu)
=\sum_\rho (\sigma_\lambda\times\sigma_\tau)|_{\Delta_\rho^{p+q}}\otimes \Hol((\sigma_\lambda\times\sigma_\mu)\circ\eta_\rho)(m_\lambda\otimes m_\mu). 
\end{equation}
We extend $\times$ to $A^{\boxtimes 2}$-chains of $X$ by $A^{\boxtimes 2}$-linearity. The following lemma can be proved by computing the $\partial_A$-boundary of (\ref{eq:cross-sigma}) above, which is routine. 
\begin{Lem}
The cross product defined as above is a $\partial_A$-chain map. Hence induces 
\[ \times:H_p(V;A)\otimes_R H_q(W;A)\to H_{p+q}(V\times W;A^{\boxtimes 2}),\]
where $V,W\subset X$ are submanifolds of $X$.
\end{Lem}

\section{Irreducibility of the $\pi'$-module $V_4$}

\begin{Prop}\label{prop:V4-irred}
The $\pi'$-module $V_4=\Sym^4 V$ defined by (\ref{eq:235}) is irreducible. Namely, $H^0(\pi';V_4)\cong V_4^{\pi'}=\{v\in V_4\mid g\cdot v=v\,(\forall g\in \pi')\}$ is zero.
\end{Prop}
\begin{proof}
Let $L_1=\{f\in V_4\mid \rho(x_1)\cdot f=f\}$, $L_2=\{f\in V_4\mid \rho(x_2)\cdot f=f\}$. We show that $L_1\cap L_2=0$. It is easy to see that $L_1$ consists of polynomials of the form
\[ f=a_0x^4+ a_2x^2y^2 + a_4y^4. \]
We consider the condition for such a polynomial to belong to $L_2$. For $\alpha=\frac{1+\sqrt{5}}{4}$, $\beta=\frac{-1+\sqrt{5}}{4}$, we compute
\footnotesize
\[\begin{split}
&\rho(x_2)\cdot(a_0x^4+ a_2x^2y^2 + a_4y^4)-(a_0x^4+ a_2x^2y^2 + a_4y^4)\\
&=\frac{(3 \sqrt{5}  + 41 +4i(\sqrt{5}+3)) a_0
+ (\sqrt{5} -1)(1-2i)a_2
+(3 \sqrt{5} - 7) a_4}{32}x^4 \\
&+\frac{(-2(\sqrt{5}+2)-i(\sqrt{5}-3))a_0
-(2+i(\sqrt{5}+1))a_2
-(2(\sqrt{5}-2)+i(\sqrt{5}-3))a_4}{8}x^3y  \\
&+\frac{3(\sqrt{5}-1)(1-2i)a_0
-3(\sqrt{5}-5)a_2
+3(\sqrt{5}-1)(1+2i)a_4}{16}x^2y^2  \\
&+\frac{(2(\sqrt{5}-2)+i(3-\sqrt{5}))a_0
+(2-i(\sqrt{5}+1))a_2
+(2(\sqrt{5}+2)+i(3-\sqrt{5}))a_4
}{8}xy^3 \\
&+\frac{(3\sqrt{5}-7)a_0+(\sqrt{5}-1)(1+2i)a_2
+(3\sqrt{5}+41-4i(\sqrt{5}+3))a_4}{32}y^4.
\end{split} \]\normalsize
The resulting polynomial is zero if and only if $a_0=a_2=a_4=0$, which can be checked, for example, by 
\[\begin{split}
& \det\left(\begin{array}{ccc}
 \frac{3 \sqrt{5}  + 41 +4i(\sqrt{5}+3)}{32} &
 \frac{(\sqrt{5} -1)(1-2i)}{32} & 
 \frac{3 \sqrt{5} - 7}{32}\\
 \frac{-2(\sqrt{5}+2)+i(\sqrt{5}-3)}{8} &
 -\frac{2+i(\sqrt{5}+1)}{8} &  
 -\frac{2(\sqrt{5}-2)+i(\sqrt{5}-3)}{8}\\
 \frac{3(\sqrt{5}-1)(1-2i)}{16} &
 -\frac{3(\sqrt{5}-5)}{16} &
 \frac{3(\sqrt{5}-1)(1+2i)}{16}
\end{array}\right)\\
 &=\frac{3(\sqrt{5}-1)+3i(\sqrt{5}-3)}{8}\neq 0.
\end{split} \]
For the other case $\alpha=\frac{1+\sqrt{5}}{4}$, $\beta=\frac{1-\sqrt{5}}{4}$, we compute
\footnotesize
\[\begin{split}
&\rho(x_2)\cdot(a_0x^4+ a_2x^2y^2 + a_4y^4)-(a_0x^4+ a_2x^2y^2 + a_4y^4)\\
&=\frac{(3 \sqrt{5}  + 41 +4i(\sqrt{5}+3)) a_0
+ (\sqrt{5} -1)(1-2i)a_2
+(3 \sqrt{5} - 7) a_4}{32}x^4 \\
&+\frac{(2(\sqrt{5}+2)+i(\sqrt{5}-3))a_0
+(2+i(\sqrt{5}+1))a_2
+(2(\sqrt{5}-2)+i(\sqrt{5}-3))a_4}{8}x^3y  \\
&+\frac{3(\sqrt{5}-1)(1-2i)a_0
-3(\sqrt{5}-5)a_2
+3(\sqrt{5}-1)(1+2i)a_4}{16}x^2y^2  \\
&+\frac{-(2(\sqrt{5}-2)+i(3-\sqrt{5}))a_0
-(2-i(\sqrt{5}+1))a_2
-(2(\sqrt{5}+2)+i(3-\sqrt{5}))a_4
}{8}xy^3 \\
&+\frac{(3\sqrt{5}-7)a_0+(\sqrt{5}-1)(1+2i)a_2
+(3\sqrt{5}+41-4i(\sqrt{5}+3))a_4}{32}y^4.
\end{split} \]\normalsize
The resulting polynomial is zero if and only if $a_0=a_2=a_4=0$, which can be checked, for example, by 
\[\begin{split}
& \det\left(\begin{array}{ccc}
 \frac{3 \sqrt{5}  + 41 +4i(\sqrt{5}+3)}{32} &
 \frac{(\sqrt{5} -1)(1-2i)}{32} & 
 \frac{3 \sqrt{5} - 7}{32}\\
 \frac{2(\sqrt{5}+2)+i(\sqrt{5}-3)}{8} &
 \frac{2+i(\sqrt{5}+1)}{8} &  
 \frac{2(\sqrt{5}-2)+i(\sqrt{5}-3)}{8}\\
 \frac{3(\sqrt{5}-1)(1-2i)}{16} &
 -\frac{3(\sqrt{5}-5)}{16} &
 \frac{3(\sqrt{5}-1)(1+2i)}{16}
\end{array}\right)\\
 &=-\frac{3(\sqrt{5}-1)+3i(\sqrt{5}-3)}{8}\neq 0.
\end{split} \]
This completes the proof.
\end{proof}



\begin{thebibliography}{CCTW}

\bibitem[ArSz]{ArSz}
G.~Arone, M.~Szymik, \emph{Spaces of knotted circles and exotic smooth structures}, Canad. J. Math. {\bf 74} (1) (2022), 1--23. 

\bibitem[AD]{AD}
M.~Audin, M.~Damian, Morse Theory and Floer Homology, Universitext. Springer, London; EDP Sciences, Les Ulis, 2014. xiv+596 pp.


\bibitem[AxSi]{AxSi}
S.~Axelrod, I.~M.~Singer, \emph{Chern--Simons perturbation theory}, in Proceedings of the XXth DGM Conference, Catto S., Rocha A. (eds.), pp. 3--45, World Scientific, Singapore, 1992{ , II, J. Diff. Geom. {\bf 39} (1994), 173--213.}

\bibitem[BK]{BK}
D.~Baraglia, H.~Konno, \emph{A gluing formula for families Seiberg--Witten invariants}, Geom. Topol. {\bf 24} (2020), 1381--1456.

\bibitem[BN]{BN}
D.~Bar-Natan, \emph{On the Vassiliev knot invariants}, Topology {\bf 34} (1995), no. 2, 423--472. 

\bibitem[Bod]{Bod}
H.~Boden, \emph{Unitary representations of Brieskorn spheres}, Duke Math. J. {\bf 75}-1 (1994), 193--220.

\bibitem[BC]{BC}
R.~Bott, A.~Cattaneo, \emph{Integral invariants of 3-manifolds. II}, J. Diff. Geom. {\bf 53}-1 (1999), 1--13.

\bibitem[BT]{BT}
R.~Bott, C.~Taubes, \emph{On the self-linking of knots}, J. Math. Phys. {\bf 35}, (1994), 5247--5287. 

\bibitem[BW]{BW} B.~Botvinnik, T.~Watanabe, \emph{Families of diffeomorphisms and concordances detected by trivalent graphs}, J. Topol. {\bf 16} (1) (2023), 207--233.

\bibitem[BG]{BG}
R.~Budney, D.~Gabai, \emph{Knotted 3-balls in $S^4$}, arXiv:1912.09029.

\bibitem[BH]{BH}
D.~Burghelea, S.~Haller, \emph{On the topology and analysis of a closed one form I}, Monogr. Enseign. Math., {\bf 38} (2001), 133--175.

\bibitem[BRW]{BRW}
M.~Bustamante, O.~Randal-Williams, \emph{On automorphisms of high-dimensional solid tori}, Geom. Topol., to appear, arXiv:2010.10887.

\bibitem[CE]{CE} 
H.~Cartan, S.~Eilenberg, Homological algebra, Princeton University Press, Princeton, N. J., 1956. xv+390 pp.

\bibitem[CS]{CS}
A.~Cattaneo, T.~Shimizu, \emph{A note on the $\Theta$-invariant of 3-manifolds}, Quantum Topol. {\bf 12} (1) (2021), 111--127.

\bibitem[Ce1]{Ce1} J.~Cerf, \emph{Topologie de certains espaces de plongements}, Bull. Soc. Math. France, {\bf 89} (1961), 227--380. 

\bibitem[Ce2]{Ce2} J.~Cerf, \emph{La stratification naturelle des espaces de fonctions diff\'{e}rentiables r\'eelles et le th\'eor\`eme de la pseudo-isotopie}, Publ. Math. I.H.\'E.S. \textbf{39} (1970), 5--173.

\bibitem[CDM]{CDM}
S.~Chmutov, S.~Duzhin, J.~Mostovoy, Introduction to Vassiliev knot invariants. Cambridge University Press, Cambridge, 2012. xvi+504 pp. 

\bibitem[CVa]{CVa} 
S.~Chmutov, A.~Varchenko, \emph{Remarks on the Vassiliev knot invariants coming from $sl_2$}, Topology {\bf 36} (1997), 153--178.

\bibitem[FS]{FS}
R.~Fintushel, R.~Stern, \emph{Instanton homology of Seifert fibred homology three spheres}, Proc. Lond. Math. Soc. (3) {\bf 61} (1990), 109--137.

\bibitem[Fuk]{Fuk}
K.~Fukaya, \emph{Morse homotopy and Chern-Simons perturbation theory}, Comm. Math. Phys. {\bf 181}-1 (1996), 37--90.

\bibitem[Ga]{Ga}
D.~Gabai, \emph{The 4-dimensional light bulb theorem}, J. Amer. Math. Soc. {\bf 33} (2020), 609--652. 

\bibitem[GGP]{GGP} S.~Garoufalidis, M.~Goussarov, M.~Polyak, \emph{Calculus of clovers and finite type invariants of 3-manifolds}, Geom. Topol. {\bf 5}, no. 1 (2001), 75--108.

\bibitem[GL]{GL} S.~Garoufalidis, J.~Levine, \emph{Homology surgery and invariants of 3–manifolds}, Geom. Topol. {\bf 5} (2001), 551--578.

\bibitem[Gu]{Gu} M.~Gusarov, \emph{Variations of knotted graphs. The geometric technique of $n$-equivalence}, Algebra i Analiz {\bf 12} (4) (2000), 79--125. English version: St. Petersburg Math. J. {\bf 12} (4) (2001), 569--604.

\bibitem[Hab]{Hab} K.~Habiro, \emph{Claspers and finite type invariants of links}, Geom. Topol. {\bf 4} (2000), 1--83.

\bibitem[Hat]{Hat}
A.~Hatcher, \emph{Concordance spaces, higher simple-homotopy theory, and applications}, Proc. Sympos. Pure Math., {\bf 32} Part 1 (1978), pp. 3--21, Amer. Math. Soc., Providence, R.I.

\bibitem[Hat2]{Hat2}
A.~Hatcher, Algebraic Topology, Cambridge University Press, Cambridge, 2002. xii+544 pp.

\bibitem[Hat3]{Hat3}
A.~Hatcher, \emph{Spectral Sequences}, Chapter 5 of \emph{Algebraic Topology}. 

\bibitem[Hatt]{Hatt}
A.~Hattori, Iso kikagaku. I-III. (Japanese) [Topology. I-III] Second edition. Iwanami Shoten Kiso S\={u}gaku [Iwanami Lectures on Fundamental Mathematics], 19. Dais\={u} [Algebra], ii. Iwanami Shoten, Tokyo, 1982. Vol. I: vi+132 pp.; Vol. II: pp. i--vi and 133--308; Vol. III: pp. i--viii and 309--495. 

\bibitem[HW]{HW}
A.~Hatcher, J.~Wagoner, \emph{Pseudo-isotopies of compact manifolds} Ast\'{e}risque {\bf 6} (1973).

\bibitem[HS]{HS}
P.J.~Hilton, U.~Stammbach, A course in homological algebra. Second edition, Graduate Texts in Mathematics, {\bf 4}, Springer-Verlag, New York, 1997. xii+364 pp.

\bibitem[Igu]{Igu}
K.~Igusa, \emph{Second obstruction to pseudoisotopy I}, arXiv:2110.09659.


\bibitem[Igu2]{Igu2}
K.~Igusa, \emph{Second obstruction to pseudoisotopy in dimension 3}, arXiv:2112.08293.

\bibitem[Kas]{Kas}
C.~Kassel, Quantum groups, Graduate Texts in Mathematics, {\bf 155}, Springer-Verlag, New York, 1995. xii+531 pp.

\bibitem[Kna]{Kna}
A.W.~Knapp, Lie groups beyond an introduction. Second edition. Progress in Mathematics, {\bf 140}. Birkhäuser Boston, Inc., Boston, MA, 2002. xviii+812 pp. 

\bibitem[Kon]{Kon} M.~Kontsevich, \emph{Feynman diagrams and low-dimensional topology}, First European Congress of Mathematics, Vol. II (Paris, 1992), Progr. Math. {\bf 120} (Birkhauser, Basel, 1994), 97--121.

\bibitem[Kre]{Kre} M.~Kreck, Differential Algebraic Topology: From Stratifolds to Exotic Spheres, Grad. Stud. in Math. {\bf 110}, Amer. Math. Soc., Providence, RI, 2010. xii+218 pp. 

\bibitem[KM]{KM}
P.~Kronheimer, T.~Mrowka, \emph{The Dehn twist on a sum of two K3 surfaces}, 
Math. Res. Lett. {\bf 27} (6) (2020), 1767--1783.

\bibitem[KT]{KT} G.~Kuperberg, D.~P.~Thurston, \emph{Perturbative 3-manifold invariants by cut-and-paste topology}, arXiv:math/9912167.

\bibitem[LM]{LM}
E.~Laitinen, I.~Madsen, \emph{Topological classifications of $\mathrm{SL}_2(\mathbb{F}_p)$ space forms}, Algebraic topology, Aarhus 1978 (Proc. Sympos., Univ. Aarhus, Aarhus, 1978), 235--261, Lecture Notes in Math., {\bf 763}, Springer, Berlin, 1979. 

\bibitem[Les1]{Les1}
C.~Lescop, \emph{On the cube of the equivariant linking pairing for knots and 3-manifolds of rank one}, arXiv:1008.5026.

\bibitem[Les2]{Les2}
C.~Lescop, \emph{Invariants of knots and 3-manifolds derived from the equivariant linking pairing}, Chern--Simons gauge theory: 20 years after, AMS/IP Stud. Adv. Math. {\bf 50} (2011), Amer. Math. Soc., Providence, RI, 217--242.

\bibitem[Mar]{Mar}
J.~March\'{e}, \emph{An equivariant Casson invariant of knots in homology spheres}, preprint, 2005.

\bibitem[Pa]{Pa} R.~Palais, \emph{Local triviality of the restriction map for embeddings}, Comment. Math. Helv. {\bf 34} (1960), 305--312.

\bibitem[Ru]{Ru}
D.~Ruberman, \emph{An obstruction to smooth isotopy in dimension 4}, 
Math. Res. Lett. {\bf 5}-6 (1998), 743--758.

\bibitem[Sa1]{Sa1}
N.~Saveliev, Lectures on the Topology of 3-Manifolds: An Introduction To The Casson Invariant, Walter de Gruyter, 1999.

\bibitem[Sa2]{Sa2}
N.~Saveliev, Invariants for homology 3-spheres. Encyclopaedia of Mathematical Sciences, {\bf 140}, Low-Dimensional Topology, I. Springer-Verlag, Berlin, 2002. xii+223 pp. 

\bibitem[Sin]{Sin}
O.~Singh, \emph{Pseudo-isotopies and diffeomorphisms of 4-manifolds}, arXiv:2111.15658.

\bibitem[Tei]{Tei}
P.~Teichner, Private communication.

\bibitem[Wa1]{Wa1} T.~Watanabe, \emph{Some exotic nontrivial elements of the rational homotopy groups of $\Diff(S^4)$}, arXiv:1812.02448.

\bibitem[Wa2]{Wa2} T.~Watanabe, \emph{Addendum to the paper: Some
  exotic nontrivial elements of the rational homotopy groups of
  $\Diff(S^4)$}, arXiv:2109.01609. 

\bibitem[Wh]{Wh} G.W.~Whitehead, \emph{Elements of Homotopy Theory}, Graduate Texts in Mathematics {\bf 61}, Springer-Verlag, New York-Berlin, 1978. xxi+744 pp.

\end{thebibliography}
\end{document}